\newtheorem{theorem}{Theorem}
\newtheorem{lemma}{Lemma}
\newtheorem{proposition}[lemma]{Proposition}
\newtheorem{corollary}[lemma]{Corollary}
\newtheorem{definition}[lemma]{Definition}
\newtheorem{remark}[lemma]{Remark}
\newtheorem{assumption}{Assumption}
\newcommand{\heta}{\hat{\eta}}
\newcommand{\hzeta}{\hat{\zeta}}
\newcommand{\Be}{\mathrm{Be}}
\newcommand{\E}{\mathbb{E}}
\renewcommand{\P}{\mathbb{P}}
\newcommand{\tun}{\mathtt{1}}
\newcommand{\bbC}{\mathbb{C}}
\newcommand{\bbD}{\mathbb{D}}
\newcommand{\bbE}{\mathbb{E}}
\newcommand{\bbN}{\mathbb{N}}
\newcommand{\bbP}{\mathbb{P}}
\newcommand{\bbQ}{\mathbb{Q}}
\newcommand{\bbR}{\mathbb{R}}
\newcommand{\bbZ}{\mathbb{Z}}
\newcommand{\cC}{\mathcal{C}}
\newcommand{\cF}{\mathcal{F}}
\newcommand{\cG}{\mathcal{G}}
\newcommand{\cL}{\mathcal{L}}
\newcommand{\cM}{\mathcal{M}}
\newcommand{\cS}{\mathcal{S}}
\newcommand{\ccM}{\mathscr{M}}
\newcommand{\gep}{\varepsilon}       % \ge already exists...
\newcommand{\gD}{\Delta}
\newcommand{\gO}{\Omega}
\newcommand{\gl}{\lambda}
\newcommand{\ind}{\mathbf{1}}
\DeclareMathOperator{\gap}{\mathrm{gap}}
\DeclareMathOperator{\Airy}{\mathrm{Airy}}
\DeclareMathSymbol{\leqslant}{\mathalpha}{AMSa}{"36} % nicer `smaller or equal'
\DeclareMathSymbol{\geqslant}{\mathalpha}{AMSa}{"3E} % nicer `larger or equal'
\DeclareMathSymbol{\eset}{\mathalpha}{AMSb}{"3F}     % nicer `emptyset'
\renewcommand{\leq}{\;\leqslant\;}                   % redef. of < or =
\renewcommand{\geq}{\;\geqslant\;}                   % redef. of > or =
\newcommand{\dd}{\,\text{\rm d}}             % a straight d for differentials
\newcommand{\Id}{\mathrm{id}}
\newcommand{\mintwo}[2]{\min_{\substack{#1 \\ #2}}} % min with 2 lines
\newcommand{\suptwo}[2]{\sup_{\substack{#1 \\ #2}}} % sup with 2 lines
\newcommand{\inftwo}[2]{\inf_{\substack{#1 \\ #2}}} % inf with 2 lines
\newcommand{\limtwo}[2]{\lim_{\substack{#1 \\ #2}}}     % \lim with 2 lines
\newcommand{\lint}{\llbracket}
\newcommand{\llb}{\llbracket}
\newcommand{\rrb}{\rrbracket}
\newcommand{\rint}{\rrbracket}
\newcommand{\Tm}{T_{\rm mix}}
\begin{document}

\begin{frontmatter}

% "Title of the paper"
\title{Cutoff phenomenon for the asymmetric\\
simple exclusion process
and \\ the biased card shuffling}
\runtitle{Cutoff for the ASEP}

% indicate corresponding author with \corref{}
% \author{\fnms{John} \snm{Smith}\corref{}\ead[label=e1]{smith@foo.com}\thanksref{t1}}
% \thankstext{t1}{Thanks to somebody} 
% \address{line 1\\ line 2\\ printead{e1}}
% \affiliation{Some University}
\begin{aug}
\author{\fnms{Cyril} \snm{Labb\'e}\thanksref{m1}\ead[label=e1]{labbe@ceremade.dauphine.fr}}
\and
\author{\fnms{Hubert} \snm{Lacoin}\thanksref{m2}\ead[label=e2]{lacoin@impa.br}}
\address{PSL Research University, Ceremade, 75775 Paris Cedex 16, France. \printead{e1}}
\address{IMPA, Estrada Dona Castorina 110, Rio de Janeiro, Brasil. \printead{e2}}
\affiliation{Universit\'e Paris-Dauphine\thanksmark{m1} and IMPA\thanksmark{m2}}
\end{aug}
\runauthor{C.~Labb\'e and H.~Lacoin}

\begin{abstract}
We consider the biased card shuffling and the Asymmetric Simple Exclusion Process (ASEP) on the segment. 
We obtain the asymptotic of their mixing times: our result show that these two continuous-time Markov chains display cutoff.
Our analysis combines several ingredients including: a study of the hydrodynamic profile for ASEP, the use of monotonic eigenfunctions, stochastic comparisons and concentration inequalities.
\end{abstract}

\begin{keyword}[class=MSC]
\kwd[Primary ]{60J27}
\kwd[; secondary ]{37A25, 82C22}
\end{keyword}

\begin{keyword}
\kwd{Card shuffling; Exclusion process; ASEP; Mixing time; Cutoff.}
\end{keyword}

 \maketitle
\end{frontmatter}

% AOS,AOAS: If there are supplements please fill:
%\begin{supplement}[id=suppA]
%  \sname{Supplement A}
%  \stitle{Title}
%  \slink[doi]{10.1214/00-AOASXXXXSUPP}
%  \sdatatype{.pdf}" 
%  \sdescription{Some text}
%\end{supplement}

\setcounter{tocdepth}{1}

\tableofcontents

\section{Introduction}

The relaxation to equilibrium for particle systems is a subject that has given rise to a rich literature.
The phenomenon has been studied from different viewpoints: importance was first given to 
the problem of the evolution of the particle density on a macroscopic space scale and an adequate time scale, which is usually referred to 
as hydrodynamic limits (see \cite{KipLan} for a detailed account on the subject as well as references), 
but the modern theory of Markov chains highlighted another aspect of the problem, 
which is how the equilibrium state is approached in terms of distance between probability measures, or the mixing time problem \cite{LevPerWil}.

\smallskip

While hydrodynamic limits are now fairly well understood for the simplest particle systems (exclusion process, zero range etc...), 
there are still some fundamental questions on the mixing time that remain unsolved. In particular, it is believed that for many particle systems
convergence to equilibrium occurs abruptly, a phenomenon known as cutoff
(see \cite[Chapter 18]{LevPerWil} for an introduction to cutoff and examples of Markov chains with cutoff). 
Until now, this has been rigorously proved to hold only for some specific cases among which
the simple (symmetric) exclusion process on the complete graph \cite{DiSh87} or in one dimensional graphs (segment and circle) \cite{Lac16, Lac162}.

In many other cases a weaker version of the statement, called pre-cutoff, has been proved (see below for a precise definition). This includes for instance the process which is the focus of this paper: the Asymmetric Simple Exclusion Process (ASEP) on the segment \cite{Benjamini}
(and also more recently \cite{GPR09}).

The ASEP can be defined as follows: $k$ particles on a segment of length 
$N$ jump independently with rate $p>1/2$ to the right and $q=(1-p)$ to the left.
A restriction is added: each site can be occupied by at most one particle,
so that every jump which yields a configuration that violates this restriction is canceled. 
We also study the biased card shuffling which is a walk on the symmetric group 
from which the ASEP can be obtained as a projection; this Markov chain also displays pre-cutoff.

While it has been known, since a counter example was proposed by Aldous in 2004 (see \cite[Figure 18.2]{LevPerWil}) that it is possible 
to have pre-cutoff without cutoff,
it is a folklore conjecture in the field that all ``reasonable'' Markov chains with pre-cutoff should in fact have cutoff, thus providing many open problems (see \cite[Section 23.2]{LevPerWil}).

The main achievement of this paper is to show that indeed cutoff holds for the ASEP and to identify the asymptotic behavior for the mixing time. This solves a question which had been
left open since the publication of \cite{Benjamini}. We prove that the mixing time corresponds exactly to the time at which the particle density stabilizes to the equilibrium profile: this underlines the connection between hydrodynamic limits and mixing times (which was already underlined in the 
symmetric case see
e.g. \cite{Lac162, LeeYau98}).
We also derive a similar result for the biased card shuffling.

Note that while the hydrodynamic limit for the asymmetric exclusion process on the full line has been well understood for many years 
\cite{Reza} (also \cite{Benfou, Liggettbook, Rost81} for the special ``wedge'' initial condition), 
the presence of boundary conditions makes the problem more delicate to analyze, and a substantial part of our paper is devoted to
the analysis of the scaling limits of two quantities associated with the ASEP:
\begin{itemize}
 \item the particle density (which had been analyzed in the small biased case by one of the authors \cite{LabbeKPZ}),
 \item the positions of the leftmost particle and the rightmost empty site (which, depending on the initial condition, 
 may or may not coincide with what is suggested by the limit of the particle density).
\end{itemize}

\section{Model and results}

\subsection{Biased card shuffling}

Given $N\in \bbN$ and $p\in (1/2,1]$, we set $q=1-p$  and consider the following continuous time Markov chain 
on the set of permutations of $N$ cards labeled from one to $N$.
Each pair of adjacent cards is chosen at rate one: then, with probability $p$ (corresponding to an independent Bernoulli 
random variable) we arrange the cards such that the lower card comes before the higher card and with probability $q$ we arrange them so that the 
higher card comes first.

A configuration of cards can be represented by an element $\sigma$ of the symmetric group $\cS_N$: 
for every $i\in \lint 1,N\rint$, $\sigma(i)$ (we use the notation $\lint a,b \rint=[a,b]\cap \bbZ$) denotes the label of the card at position $i$. The dynamics presented above then corresponds to the Markov process on $\cS_N$ with the following generator:
\begin{equation}\label{generator}
\begin{split}
\cL_N f(\sigma)&:= \sum_{i=1}^{N-1} \left( p\ind_{\{\sigma(i+1) < \sigma(i)\}} + q\ind_{\{\sigma(i+1) > \sigma(i)\}} \right)[f(\sigma\circ\tau_i))-f(\sigma)]\\
 &=\sum_{i=1}^{N-1} p[f(\sigma^{i,+})-f(\sigma)]+q[f(\sigma^{i,-})-f(\sigma)].
\end{split}\end{equation}
In the expression above, $\tau_i$ denotes the transposition $(i,i+1)$ and $\sigma^{i,+}, \sigma^{i,-}$ denote the elements of $\cS_N$ which satisfy the following property
\begin{equation*}
 \begin{cases}
  \sigma^{i,\pm}(j)&= \sigma(j), \qquad \forall j\in \lint 1,N \rint \setminus\{i,i+1\}\;,\\
  \sigma^{i,+}(i)&<   \sigma^{i,+}(i+1)\;,\\
  \sigma^{i,-}(i)&>   \sigma^{i,-}(i+1)\;.
 \end{cases}
\end{equation*}

\noindent Note that either $\sigma^{i,+}$  or $\sigma^{i,-}$ is equal to $\sigma$ so that the choice of a pair of cards does not always imply a modification of the permutation.

As $p>q$, this way of shuffling cards favors permutations which are more ``ordered''.
More precisely, if we let $D(\sigma)$ denote the minimal number of adjacent transpositions needed to obtain $\sigma$  starting from the identity permutation 
(the graph distance between $\sigma$ and the identity in the Cayley graph of $\cS_N$ with generator $(\tau_i)_{i=1}^{N-1}$), then one can check that
the equilibrium measure is given by 
\begin{equation*}
 \pi_N(\sigma):=\frac{\gl^{-D(\sigma)}}{\sum_{\sigma' \in \cS_N}\gl^{-D(\sigma')}}\;,
\end{equation*}
where $\gl=p/q$. The detailed balance condition is easy to check with the relation
$$D(\sigma):=\sum_{i<j} \ind_{\{\sigma(i)>\sigma(j)\}}\;.$$ 
In the particular case $p=1$, the parameter $\gl$ equals $+\infty$ and the equilibrium measure $\pi_N$ is a Dirac measure at the identity permutation.

\smallskip

We denote the process starting from initial condition $\xi\in \cS_N$ by $\sigma^{\xi}_t$ and 
let $Q^{\xi}_t$ denote the distribution of $\sigma^{\xi}_t$ at time $t$.

\smallskip

Recall that the total variation distance between two probability measures $\alpha$ and $\beta$ on some discrete space $\Omega$ is defined by 
\begin{equation*}\begin{split}
 \| \alpha-\beta \|_{TV}&:= \frac{1}{2}\sum_{x\in \gO} |\alpha(x)-\beta(x)|=  \max_{A\subset \gO}[ \alpha(A)-\beta(A)]\\
 &=\inftwo{X_1\sim \alpha}{X_2\sim \beta}P(X_1\ne X_2)\;,
\end{split}\end{equation*}
where the infimum is taken over all couplings that give distribution $\alpha$ to $X_1$ and $\beta$ to $X_2$.
The fact that the three definitions are equivalent is a standard property see e.g.~\cite[Section 4.1]{LevPerWil}.
Using standard terminology, we define the (worst-case) total-variation distance to equilibrium by
\begin{equation*}
 d^{N}(t):= \max_{\xi \in \cS_N}\| Q^{\xi}_t-\pi_N \|_{TV},
\end{equation*}
and the corresponding mixing time by 
\begin{equation*}
\Tm^N(\gep):= \inf\{ t\geq 0 \ : \ d^{N}(t)<\gep\}.
\end{equation*}
A notion very much related to mixing time is that of \textsl{cutoff}, which designates a form of abrupt convergence to equilibrium for Markov chain.
For an arbitrary sequence of Markov chains, cutoff is said to hold if for all $\gep>0$,
\begin{equation*}
 \lim_{N\to \infty} \frac{\Tm^N(\gep)-\Tm^N(1-\gep)}{\Tm^N(1/4)}=0.
\end{equation*}
If  $\sup_{\gep\in(0,1)}\limsup_{N\to \infty} (\Tm^N(\gep)-\Tm^N(1-\gep))/\Tm^N(1/4)<\infty$ we say that pre-cutoff holds.

We define $\gap_{N}$ to be the spectral gap for this Markov chain. Recall that for a continuous time reversible, irreducible Markov chain with generator $\cL$ on a finite state space, the spectral gap is simply defined as 
the smallest positive eigenvalue of $-\cL$ \cite[Section 20.3]{LevPerWil}.
The spectral gap controls the asymptotic rate of convergence to equilibrium (see \cite[Corollary 12.6]{LevPerWil}), in the case of our Markov chains this gives
\begin{equation}\label{symptogap}
  \lim_{t\to \infty} \frac{1}{t} \log d^{N}(t)= -\gap_N.
\end{equation}
Our main result is the following

\begin{theorem}\label{th:shuffle}
 We have for every $p\in (1/2,1]$ and $\gep\in(0,1)$
 \begin{equation*}
 \lim_{N\to \infty} \frac{\Tm^N(\gep)}{N}=\frac{2}{p-q}.
 \end{equation*}
Moreover we have for every value of $N$ and $p$ 
\begin{equation}\label{gapbias}
 \gap_N= (\sqrt{p}-\sqrt{q})^2+ 4\sqrt{pq}   \sin  \left(\frac \pi{2N} \right)^2.
\end{equation}
\end{theorem}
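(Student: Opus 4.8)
The plan is to split Theorem~\ref{th:shuffle} into two essentially independent statements: the spectral gap formula \eqref{gapbias}, which is an exact algebraic computation valid for all $N$, and the mixing-time asymptotics, which is a soft-analysis statement proved via hydrodynamics, monotone couplings and concentration. I would treat the gap first since it is self-contained. The key observation is that the biased card shuffling projects onto the ASEP with $k$ particles for every $k\in\{0,\dots,N\}$, and in fact onto the \emph{single-particle} ASEP on $\{1,\dots,N\}$, which is a birth-and-death chain: a lone particle jumps right with rate $p$ and left with rate $q$ subject to reflecting boundaries at $1$ and $N$. The generator of that walk, written in the reversible basis (conjugating by the square root of the reversible measure $\gl^{i}$, i.e. setting $\psi_i=\gl^{i/2}\phi_i$), becomes a symmetric Jacobi matrix with constant off-diagonal entries $\sqrt{pq}$ and diagonal entries $-(p+q)$ in the bulk, with the boundary rows corrected so that the matrix annihilates the constant vector in the original variables. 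One then diagonalises this tridiagonal Toeplitz-plus-boundary matrix explicitly; the eigenvalues come out as $-(p+q)+2\sqrt{pq}\cos(j\pi/N)$ for $j=0,\dots,N-1$, so the smallest positive eigenvalue of $-\cL$ restricted to the single-particle sector is
\[
(p+q)-2\sqrt{pq}\cos(\pi/N)=(\sqrt p-\sqrt q)^2+2\sqrt{pq}\bigl(1-\cos(\pi/N)\bigr)=(\sqrt p-\sqrt q)^2+4\sqrt{pq}\sin^2(\pi/2N),
\]
which is exactly \eqref{gapbias}. The remaining point is that this single-particle eigenvalue is in fact the spectral gap of the whole shuffling chain on $\cS_N$: the lower bound on the gap (i.e. that no smaller positive eigenvalue exists) should follow from a comparison/contraction argument — for instance, the monotone Wilson-type eigenfunction $\sum_i a_i \ind_{\{\text{card at position }i\}}$ lifted to $\cS_N$ realises this eigenvalue, and a coupling contraction (the chain is monotone with respect to the Bruhat-type order and contracts distances at rate $\gap_N$) shows the gap is no smaller. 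I expect the cleanest route is: exhibit the eigenfunction to get $\gap_N\le(\sqrt p-\sqrt q)^2+4\sqrt{pq}\sin^2(\pi/2N)$, and use a path/comparison inequality or a direct coupling to get the matching lower bound.

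For the mixing-time statement $\Tm^N(\gep)/N\to 2/(p-q)$, the strategy is the standard upper/lower bound sandwich. The lower bound $\Tm^N(1-\gep)\gtrsim \tfrac{2}{p-q}N$ comes from tracking a single distinguished coordinate or a small block of cards: starting from the maximally-disordered permutation, the "height function" or the position of a tagged card has ballistic drift of order $p-q$ towards its equilibrium location but must travel a macroscopic distance of order $N$, and a second-moment / concentration estimate shows the distribution has not yet relaxed before time $(2/(p-q)-\eta)N$; the factor $2$ reflects that the worst coordinate must traverse essentially the whole segment. The upper bound is where the hydrodynamic analysis advertised in the introduction enters: one shows that after time $(2/(p-q)+\eta)N$ the particle density profile (for every particle number $k$ simultaneously) has reached the equilibrium profile up to $o(N)$ fluctuations, then upgrades this macroscopic statement to a total-variation statement using the monotone coupling — sandwiching the chain started from an arbitrary $\xi$ between the chains started from the all-ordered and all-reversed configurations — together with concentration inequalities to control the $o(N)$ discrepancy. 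Censoring/monotonicity (in the spirit of Lacoin's work on the symmetric case, cited as \cite{Lac16,Lac162}) lets one reduce the worst-case initial condition to these two extremal ones.

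The main obstacle, as the authors themselves flag, is the hydrodynamic input with boundary conditions: on the full line the ASEP hydrodynamic limit is classical \cite{Reza}, but on the segment the behaviour near the two endpoints — and in particular the motion of the leftmost particle and the rightmost empty site, which need not follow the naive characteristic predicted by the bulk density — requires a delicate separate analysis. Controlling these "front" quantities precisely enough (to within $o(N)$, with the right constant $2/(p-q)$) and feeding that control into the total-variation upper bound is the crux; the spectral gap formula, by contrast, I expect to be a short exact computation. A secondary technical point is that all estimates must be uniform in the particle number $k$ (equivalently, across all projections), so the hydrodynamic and concentration bounds need to hold with error terms independent of $k$.
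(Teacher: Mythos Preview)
Your overall architecture is close to the paper's, but there are two genuine gaps and one mismatch worth flagging.

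\textbf{Mixing-time lower bound.} Tracking a single tagged card does \emph{not} give the constant $2/(p-q)$. The highest card (label $N$) always wins its comparisons, so its position is literally a $(p,q)$-biased random walk on $\lint 1,N\rint$; starting from position $1$ it reaches $N$ in time $\sim N/(p-q)$, and the same holds for card $1$ drifting left. No single card needs to travel more than distance $N$, so your heuristic for the factor $2$ (``the worst coordinate must traverse essentially the whole segment'') yields only $N/(p-q)$. The factor $2$ is a genuinely collective effect: in the ASEP projection at density $\alpha=1/2$, the leftmost particle is slowed by the particles ahead of it, and the rarefaction-fan computation gives $\ell_{1/2}(t)=(\sqrt t-1/\sqrt 2)^2$, which reaches $1/2$ only at $t=2$. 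The paper's lower bound is exactly this: it invokes the case $\alpha=1/2$ of Theorem~\ref{th:asep}, whose lower bound uses the event $A^\gep_{N,k}$ together with Proposition~\ref{th:fronteerlimit} (the scaling limit of $L_{N,k}$). So the lower bound already requires the front analysis you flag as the main obstacle for the \emph{upper} bound; it is not a soft distinguishing-statistic argument.

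\textbf{Spectral-gap lower bound.} Your single-particle eigenfunction, lifted to $\cS_N$, is constant on any pair of permutations that agree on the position of the one distinguished card; hence $\delta_{\min}=0$ and Wilson's contraction inequality is vacuous on $\cS_N$. The paper instead constructs, via the discrete Hopf--Cole transform, an \emph{increasing} eigenfunction $f_{N,k}$ on each $\gO_{N,k}$ (not just $k=1$) with the common eigenvalue $\varrho+\gamma_N$, and then sums the Wilson-type bound over all $k$ projections, using the injectivity of $\sigma\mapsto(h_k(\sigma))_{k}$, to obtain $\bar d_N(t)\le C e^{-(\varrho+\gamma_N)t}$. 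You need the full family $(f_{N,k})_k$; the $k=1$ case alone does not close the argument on $\cS_N$.

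\textbf{Upper bound.} Your outline is broadly right, but ``concentration inequalities to control the $o(N)$ discrepancy'' is not what the paper does. After the hydrodynamic/front input places $\sigma^{\max}_{t_N}$ in the macroscopic neighbourhood $B^\gep_N$ of equilibrium (Lemma~\ref{lem:intheb}), the paper \emph{reuses} the Hopf--Cole eigenfunctions: on $B^\gep_N$ one has $f_{N,k}(h_k(\sigma^{\max}_{t_N}))-f_{N,k}(\vee)\le C N\,\delta_{\min}(f_{N,k})\,\gl^{\gep N}$, and the eigenfunction contraction at rate $\varrho+\gamma_N$ then kills this in additional time $\delta N$ provided $\gep$ is small relative to $\delta$. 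So the same monotone eigenfunctions drive both the gap identification and the final passage from ``macroscopically equilibrated'' to ``mixed in total variation''.
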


Let us stress that another 
proof of \eqref{gapbias} provided by Levin and Peres in \cite{LevPer16} appeared while we were in the process of writing the present paper.

\begin{remark}
 Note that $\gap_N$ coincides with the spectral gap of the biased walk with transition rates $p$ and $q$ on the segment (and this will also be the case for the ASEP). This result is reminiscent of Aldous' spectral gap conjecture, now a theorem proved by Caputo, Liggett and Richthammer \cite{CapLigRic10}, 
 which states that the spectral gap for the interchange process on an arbitrary graph equals that of the corresponding random walk.
 However let us stress that the biased card shuffling is not an interchange process, and that our results can not be deduced from the one in \cite{CapLigRic10}.
\end{remark}

\begin{remark}
Observe that $\gap_N \Tm^N(\gep) \to \infty$ as $N\to\infty$, and recall that this condition is necessary (but not sufficient) for having a pre-cutoff, see~\cite[Sec 18.3]{LevPerWil}.
\end{remark}

Note that intuitively, as the shuffle tends to order the pack, the worst initial condition should be the permutation $\sigma_{\max}$ defined by 
\begin{equation}\label{defsigmamax}
 \sigma_{\max}:i\mapsto N+1-i \;.
\end{equation}
Our proof implies indeed that this is asymptotically the case.

\subsection{Asymmetric Simple Exclusion Process}

Given $k\in\lint 1,N-1 \rint$, we obtain another Markov process if we decide to follow only the positions of the cards labeled from $N-k+1$ to $N$, 
that is if we consider the image of 
$(\sigma_t)_{t\ge 0}$ by the transformation
\begin{equation}\label{projek}
\sigma \mapsto \ind_{\lint N-k+1,N\rint} \circ \sigma.
\end{equation}
(Our choice of following the particles with higher rather than lower labels may seem unnatural, but is driven by
the fact that we want the particles to drift to the right).

It is not difficult to check that the process obtained via this transformation is indeed Markov. A more intuitive description is the following. Consider
$k$ particles on the segment $I_N:=\lint 1,N \rint$ which are initially placed on $k$ distinct sites. The particles perform independent, continuous time, random walks on $I_N$ with 
jump rate $p$ to the right and $q$ to the left (a particle at site $1$, resp.~$N$, is not allowed to jump to its left, resp.~right): however, if a particle tries to jump on an occupied site, the jump is cancelled.

\smallskip

Denoting the presence of particle by $1$s and their absence by $0$s,
the space of configurations associated to this process is given by 
$$\gO^0_{N,k}\:= \left\{ \eta \in \{0,1\}^{I_N} \ : \ \sum_{i=1}^N \eta(i)=k \right\}\;.$$ 
We denote the evolving particle system by
$(\eta^{\xi}(t,\cdot))_{t\geq 0}$ where $\eta^{\xi}(t,x)$ equals $1$ if there is a particle at site $x$ at time $t$, and $0$ otherwise 
while $\xi$ underlines the dependence on the initial condition. The law of $\eta^{\xi}(t)$ is denoted by $P^\xi_t$.
Now if we set for $\eta\in \gO^0_{N,k}$, 
\begin{equation}\label{def:volume}
A(\eta):= \left(\sum_{i=1}^N \eta(i) (N-i)\right) -\frac{k(k-1)}{2}\;,
\end{equation}
then the equilibrium measure $\pi_{N,k}$ for the dynamics is simply the image of $\pi_N$ by the transformation \eqref{projek},
namely
\begin{equation}\label{eq:equilibro}
 \pi_{N,k}(\eta):= \frac{\gl^{-A(\eta)}}{\sum_{\eta'\in\gO^0_{N,k}} \gl^{-A(\eta')}}\;.
\end{equation}
We also define the associated distance to equilibrium and mixing time to be respectively 
\begin{equation*}
\begin{split}
 d^{N,k}(t)&:= \max_{\xi \in \gO^0_{N,k}} \|P^{\xi}_t-\pi_{N,k} \|_{TV}\;,\\
 \Tm^{N,k}(\gep)&:= \inf\{ t\geq 0 \ : \ d^{N}(t)<\gep\}\;.
\end{split}
 \end{equation*}

We are going to compute the mixing time for the system in the limit where $k/N$ tends to $\alpha \in [0,1]$. 
Even though we allow the values $0$ and $1$ for $\alpha$, we always impose $k\ge 1$ and $k\le N-1$ in order to exclude
settings where the state-space becomes trivial ($\#\gO^0_{N,k}=1$). We use the notation  
\begin{equation*}
\limtwo{N\to \infty}{k/N\to \alpha} J(k,N)=l,  
\end{equation*}
to express that the limit of the real valued function $J(k,N)$ is $l$ for all sequences such that $k/N$ tends to $\alpha$,
or in other words
\begin{equation*}
 \lim_{\gep\to 0}\limsup_{N\to \infty}\suptwo{k\in \lint 1,N-1 \rint}{|k/N-\alpha|\le \gep} |J(k,N)-l|=0\;.
\end{equation*}

\begin{theorem}\label{th:asep}
 We have for every $p\in(1/2,1]$, every $\alpha\in[0,1]$ and every $\gep\in(0,1)$
 \begin{equation*}
 \limtwo{N\to \infty}{k/N\to \alpha} \frac{\Tm^{N,k}(\gep)}{N}= \frac{(\sqrt{\alpha}+\sqrt{1-\alpha})^2}{p-q}\;.
 \end{equation*}
 Moreover for every $N$, every $k\in \lint 1,N-1 \rint$ and every $p$ we have 
 \begin{equation}\label{gapasep}
  \gap_{N,k}= (\sqrt{p}-\sqrt{q})^2+ 4\sqrt{pq}    \sin\left( \frac\pi {2N} \right)^2\;.
  \end{equation}
\end{theorem}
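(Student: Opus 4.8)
The statement has two parts, which I would handle in quite different ways.

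The spectral-gap identity \eqref{gapasep} is the short one. The bound $\gap_{N,k}\geq(\sqrt p-\sqrt q)^2+4\sqrt{pq}\sin\!\left(\tfrac{\pi}{2N}\right)^2$ is for free: the $k$-particle ASEP is the image of the biased card shuffle under the lumping \eqref{projek}, so every eigenfunction of the ASEP generator pulls back to an eigenfunction of $\cL_N$ with the same eigenvalue; hence the ASEP spectrum is contained in that of the shuffle and $\gap_{N,k}\geq\gap_N$, and one invokes \eqref{gapbias}. For the reverse inequality I would exhibit an eigenfunction of $\cL_{N,k}$ whose eigenvalue is exactly the single-particle gap. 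The clean route is self-duality: the ASEP on the segment with closed ends is dual to itself, and with a single dual particle the dual process is a biased random walk on $\{1,\dots,N\}$, whose second eigenfunction $h$ and eigenvalue $(\sqrt p-\sqrt q)^2+4\sqrt{pq}\sin\!\left(\tfrac{\pi}{2N}\right)^2$ are explicit; then $\eta\mapsto\sum_x w(x)\,h(x)\,D(\eta,x)$, with $D$ the duality function and $w$ the appropriate weight, is an eigenfunction of $\cL_{N,k}$ with that eigenvalue. (Alternatively, one checks that the monotone second eigenfunction of the shuffle produced for Theorem \ref{th:shuffle} is constant on the fibres of \eqref{projek}, hence descends to $\Omega^0_{N,k}$.) Either way $\gap_{N,k}$ equals the single-particle gap.

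For the mixing time I would start from monotonicity. The ASEP is monotone for the order in which $\eta\preceq\eta'$ when, for every $j$, the $j$-th particle of $\eta$ lies weakly to the left of the $j$-th particle of $\eta'$; the bottom and top states are $\eta_{\min}$ (particles on $\{1,\dots,k\}$) and $\eta_{\max}$ (particles on $\{N-k+1,\dots,N\}$), and since $\pi_{N,k}\propto\gl^{-A(\eta)}$ is concentrated near $\eta_{\max}$, the initial condition $\eta_{\min}$ is the worst. Under the basic (grand) coupling one gets $d^{N,k}(t)\leq\P\big(\eta^{\eta_{\min}}(t)\neq\eta^{\eta_{\max}}(t)\big)$ and also $d^{N,k}(t)\geq\|P^{\eta_{\min}}_t-\pi_{N,k}\|_{TV}$; moreover $A$ is strictly decreasing along the order (if $\eta\preceq\eta'$ then $A(\eta)\geq A(\eta')$, with equality only if $\eta=\eta'$), so the two extremal chains have coalesced exactly when $A(\eta^{\eta_{\min}}(t))=A(\eta^{\eta_{\max}}(t))$. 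Thus everything reduces to the dynamics started from $\eta_{\min}$.

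The constant comes from the hydrodynamics. After rescaling space and time by $N$, the density of $\eta^{\eta_{\min}}$ solves the entropy formulation of $\partial_t\rho+(p-q)\partial_x(\rho(1-\rho))=0$ on $[0,1]$ with zero flux at both ends, from $\rho_0=\ind_{[0,\alpha]}$. A rarefaction fan leaves $x=\alpha$; when its extreme characteristics reach the two walls a shock forms at each end, the left one at position $N b(t)$ with $b(t)=(\sqrt{(p-q)t}-\sqrt\alpha)^2$ and the right one at $N c(t)$ with $c(t)=1-(\sqrt{(p-q)t}-\sqrt{1-\alpha})^2$; these collide when $b(t)=c(t)=1-\alpha$, i.e. at $t_*=(\sqrt\alpha+\sqrt{1-\alpha})^2/(p-q)$, at which moment $\rho(t_*,\cdot)$ is already the equilibrium step at $1-\alpha$. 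For the lower bound on $\Tm^{N,k}(\gep)$ I fix $\delta>0$ and look at time $(1-\delta)Nt_*$: the fan still has macroscopic width ($b<1-\alpha<c$), so a one-sided hydrodynamic estimate — the particle current across any bond is bounded by the Burgers flux, hence particles cannot have piled up on the right faster than the PDE predicts — shows that the number of particles in $\{1,\dots,\lfloor(1-\alpha)N\rfloor\}$ is at least $cN$ with high probability, whereas under $\pi_{N,k}$ this count is $O(1)$; a concentration inequality for this $1$-Lipschitz observable under the ASEP dynamics gives $o(N)$ fluctuations, and Chebyshev yields $d^{N,k}((1-\delta)Nt_*)\to1$.

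For the upper bound, at time $(1+\delta)Nt_*$ I must show $A(\eta^{\eta_{\min}}(t))=A(\eta^{\eta_{\max}}(t))$ with high probability: the chain started from $\eta_{\max}$ (the most likely configuration, $\pi_{N,k}$-mass bounded below) equilibrates in $O(1)$ time so $A(\eta^{\eta_{\max}}(t))=O(1)$, while the hydrodynamics run to $(1+\tfrac{\delta}{2})Nt_*$ only gives $A(\eta^{\eta_{\min}}(t))=o(N^2)$ — and one needs $O(1)$. Bridging this is the crux: once the macroscopic profile has settled I would control the residual discrepancy — essentially the positions of the leftmost particle and of the rightmost empty site, which, as stressed in the introduction, need not sit where the density profile suggests — using the monotone eigenfunction of $\cL_{N,k}$ from the gap discussion together with concentration estimates, showing that these positions reach their equilibrium values and the chain couples with a stationary copy within an additional time $o(N)$. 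The hardest parts of the whole argument are exactly these two: the boundary hydrodynamics (the entropy solution of the Burgers equation near a reflecting wall, and pinning down the shocks $b(t),c(t)$ to the required precision) and the microscopic analysis of the leftmost particle and rightmost empty site via stochastic comparison, the monotone eigenfunction, and concentration; the remaining ingredients — the monotonicity reductions, the projection argument for the gap, and the Chebyshev/concentration packaging — are routine.
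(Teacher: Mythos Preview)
Your outline is close to the paper's and you correctly isolate the two hard pieces (the boundary hydrodynamics and the microscopic location of the leftmost particle / rightmost empty site). The lower bound sketch is fine and is essentially what the paper does with a slightly different observable. But the upper-bound mechanism is mis-described in a way that would make it fail.

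The paper's upper bound is a two-stage argument, and the monotone eigenfunction enters only in the \emph{second} stage. Stage~1 runs to time $t_{\alpha,N}=\tfrac{N}{p-q}(\sqrt\alpha+\sqrt{1-\alpha})^2$ and shows that, starting from $\wedge$, the leftmost particle and rightmost empty site lie within $\gep N$ of $N-k$ (the event $h^\wedge_{t_{\alpha,N}}\in A^\gep_{N,k}$). This is \emph{not} obtained from the eigenfunction: Section~\ref{sec:lmprmes} couples the leftmost $\delta N$ particles with an $n$-particle ASEP on the full line $\bbZ$, proves a law of large numbers for that auxiliary system (martingale concentration for each particle plus comparison with a stationary product-of-geometrics ASEP to pin down the mean speed), and feeds in the hydrodynamic limit to track where the $(\delta N{+}1)$-th particle of the true system sits so the coupling survives. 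Only in Stage~2, on $[t_{\alpha,N},(1+\delta)t_{\alpha,N}]$, does $f_{N,k}$ appear: conditioned on $h^\wedge_{t_{\alpha,N}}\in A^\gep_{N,k}$, the coupling bound is $CN\,\gl^{\gep N}e^{-\delta t_{\alpha,N}(\varrho+\gamma_N)}$, which vanishes once $\gep$ is small relative to $\delta$. You write that the eigenfunction is what pins down the leftmost particle and that the residual coupling takes an additional $o(N)$. The first is a misattribution (the eigenfunction is a global object and gives no control on a single particle's position; you never describe the actual infinite-line comparison that does the work), and the second cannot succeed: after Stage~1 the eigenfunction is still of order $\gl^{\gep N}$ and the contraction rate $\varrho=(\sqrt p-\sqrt q)^2$ is $O(1)$, so one genuinely needs a further $\Theta(N)$, not $o(N)$, to kill that exponential.

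On the spectral gap your route is legitimate but circular as written: in the paper \eqref{gapbias} is deduced \emph{from} the ASEP eigenfunctions via the projections, not the other way around. The paper instead builds $f_{N,k}$ directly through the discrete Cole--Hopf transform $\xi\mapsto\gl^{\xi(x)/2}$ (giving $\gap_{N,k}\le\varrho+\gamma_N$), and then uses that same monotone $f_{N,k}$ together with the grand coupling to obtain $d_{N,k}(t)\le Ce^{-(\varrho+\gamma_N)t}$, hence $\gap_{N,k}\ge\varrho+\gamma_N$. Your projection inequality $\gap_{N,k}\ge\gap_N$ is correct but would require an independent proof of the shuffle gap (e.g.\ Levin--Peres) to avoid the loop.
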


By symmetry, an analogous result holds for the case $p\in[0,1/2)$. 
The behavior of the system for $p=1/2$ is very different
and was the object of a particular study \cite{Lac16} where it is shown that cutoff holds on the time scale $N^2\log N$
confirming a conjecture of Wilson \cite{Wil04}.

\begin{remark}\label{rem:sym}
We have mentioned that it was sufficient to consider the case $p\in(1/2,1]$. Let us also mention that for the proof of Theorem \ref{th:asep}, we only need to treat the case $\alpha\le 1/2$ (and in all the paper we apply this restriction).
These two facts are consequences of well known symmetry considerations, which we detail here for the sake of completeness.\\
For the biased card shuffling, we can notice (recall \eqref{defsigmamax}) that given $\xi\in \cS_N$, $\sigma^{\xi}_t\circ \sigma_{\max}$
is a card shuffling with bias $(1-p)$ and initial condition $\xi\circ \sigma_{\max}$: the distance to equilibrium is 
left unchanged under permutation of the state space, and therefore the mixing time is invariant upon reversing the bias.\\
As a consequence, for $\xi\in \gO_{N,k}^0$, the process $\eta^{\xi}_t\circ \sigma_{\max}$ is an ASEP with opposite bias, and by the same argument as above, we find that the mixing time is invariant under $p\mapsto 1-p$.
Moreover if $\mathbf{1}-\xi$ denotes the configuration where zeros and ones are swapped, 
then $\mathbf{1}-\xi \in \gO_{N,N-k}^0$ and $\mathbf{1}-\eta^\xi_t$ is an ASEP with opposite bias and
complementary density of particles. Hence, the mixing time is invariant under the map $k\mapsto N-k$.
\end{remark}

\subsection{Mixing time from an arbitrary initial condition}

Instead  of the worst-case total-variation distance to equilibrium, 
we can also consider the total-variation distance to equilibrium starting from a given configuration $\xi_N \in \Omega_{N,k}^0$:
$$ d^{\xi_N}(t) := \| P_t^{\xi_N} - \pi_{N,k} \|_{TV}\;,$$
and the associated notion of mixing time
$$ \Tm^{\xi_N}(\gep) := \inf\{ t\geq 0 \ : \ d^{\xi_N}(t)<\gep\}\;.$$

We will show that asymptotically, the mixing time depends on three characteristics of $\xi_N$:
The initial location of the leftmost particle,
the initial location of the rightmost empty site,
and the initial empirical density of particles (a probability distribution on the interval).
We thus introduce the following notation, for $\xi\in \gO^0_{N,k}$:
\begin{equation}\label{def1:lknrkn}
\begin{split}
\ell_{N}(\xi)&=\min\{ x\in \lint 1,N \rint \ : \  \xi(x)= 1 \},\\
r_{N}(\xi)&=\max\{ x\in \lint 1,N \rint  \ : \  \xi(x)= 0 \}.
\end{split}
\end{equation}
As when properly renormalized, $\ell_N$, $r_N$ and the particle density all belong to compact sets, from any sequences $k_N$ ( and $\xi_N,N\geq 1$), 
one can extract a subsequence along which the three quantities converge. 
Hence, without loss of generality, we will assume (when $k/N\to\alpha>0$) that there exist $\ell,r\in [0,1]$ and $\rho_0\in L^{\infty}([0,1])$ such that
\begin{equation}\begin{split}\label{scalingz}
& \lim_{N\to \infty} \frac{\ell_{N}(\xi_N)}{N}=\ell, \quad   \lim_{N\to \infty} \frac{r_{N}(\xi_N)}{N}=r, \\
& \lim_{N\to \infty}  \frac1{N}\sum_{x=1}^N \xi_N(x) \delta_{x/N}(dy)=\rho_0(y)\dd y.
\end{split}
\end{equation}
The convergence to $\rho_0$ is meant to hold in the weak topology, that is, for any continuous function $\varphi\in \cC([0,1])$
we have 
\begin{equation}\label{Eq:WeakCV}
\lim_{N\to \infty}  \frac1{N}\sum_{x=1}^N \xi_N(x) \varphi(x/N) = \int_{[0,1]} \rho_0(x) \varphi(x)\dd x.
\end{equation}
Note that we necessarily have $\int\rho_0(x)\dd x=\alpha$.
In the case where the limit of $k/N$ vanishes, we only require convergence for $\ell_N$ as the other two are trivial.

\smallskip

We prove that under these conditions and provided that $\ell<r$, 
the mixing time starting from $\xi_N$ also displays cutoff on scale $N$. More precisely,

\begin{theorem}\label{th:asep2}
Assuming that the sequence $(\xi_N,N\geq 1)$ of initial conditions satisfies \eqref{scalingz}, we have for all $\epsilon \in (0,1)$,
$$ \lim_{N\to \infty} \frac{\Tm^{\xi_N}(\gep)}{N}=\frac1{p-q} \max\left(t_{\rho_0}, 1-\alpha-\ell,r-1+\alpha\right)\;,$$
where $t_{\rho_0}$ is a function of the initial density.

In the case where $k/N\to 0$ and $\lim_{N\to \infty} \ell_{N}(\xi_N)/N=\ell$, we have 
$$\lim_{N\to \infty} \frac{\Tm^{\xi_N}(\gep)}{N}= \frac{1-\ell}{p-q}.$$
\end{theorem}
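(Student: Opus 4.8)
The plan is to prove matching lower and upper bounds on $\Tm^{\xi_N}(\gep)$, valid for every fixed $\gep\in(0,1)$; their coincidence yields both the limiting value and the cutoff. For the \emph{lower bound} I would isolate three obstructions, one per term in the maximum. \emph{Left front:} the leftmost particle of $\eta^{\xi_N}(t,\cdot)$ never has a particle on its left, so its position is stochastically dominated by $\ell_N(\xi_N)$ plus a nearest-neighbour walk reflected at $1$ with right/left rates $p,q$; hence with probability tending to $1$ it lies, at time $t$, at distance at most $(p-q)t+o(N)$ from $\ell_N(\xi_N)$, whereas under $\pi_{N,k}$ it lies within $O(1)$ of $N-k+1$ (the measure $\pi_{N,k}\propto\gl^{-A(\eta)}$ concentrates on the top configuration $\eta_\vee$, particles at $N-k+1,\dots,N$, deviations being exponentially costly). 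Since $\ell<1-\alpha$ by \eqref{scalingz}, this forces $d^{\xi_N}(t)\to1$ for $t\le\frac{1-\alpha-\ell}{p-q}N-o(N)$. \emph{Right front:} the empty sites perform an ASEP with reversed bias drifting left, so the same argument for the rightmost empty site gives $d^{\xi_N}(t)\to1$ for $t\le\frac{r-1+\alpha}{p-q}N-o(N)$. \emph{Bulk:} for $\varphi\in\cC([0,1])$ the linear statistic $N^{-1}\sum_x\eta^{\xi_N}(t,x)\varphi(x/N)$ concentrates — here one uses the concentration inequalities — around $\int_0^1\rho\big(\tfrac{(p-q)t}{N},x\big)\varphi(x)\dd x$, with $\rho$ the entropy solution on $[0,1]$ of $\partial_\tau\rho+\partial_x(\rho(1-\rho))=0$ with no-flux boundary conditions and $\rho(0,\cdot)=\rho_0$; declaring $t_{\rho_0}$ to be the first time $\rho(\tau,\cdot)$ reaches the equilibrium profile $\ind_{[1-\alpha,1]}$, and using that the same statistic under $\pi_{N,k}$ concentrates on $\int_{1-\alpha}^1\varphi$, this forces $d^{\xi_N}(t)\to1$ for $t\le\frac{t_{\rho_0}}{p-q}N-o(N)$. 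The lower bound is the maximum of the three.

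For the \emph{upper bound} I would use attractiveness: with the order $\eta\preceq\eta'$ meaning $h^\eta\le h^{\eta'}$ pointwise, where $h^\eta(x)=x-2\sum_{i\le x}\eta(i)$, the monotone grand coupling preserves $\preceq$. Couple $\eta^{\xi_N}$ with the stationary chain $\eta^{\Pi}$ ($\Pi\sim\pi_{N,k}$) and with $\eta^{\eta_\vee}$ via common clocks. Since $\eta_\vee$ equilibrates in time $O(\log N)$ (the gap is of order one, see \eqref{gapasep}, and $\pi_{N,k}$ charges only configurations within $O(\log N)$ of $\eta_\vee$ with high probability), one gets $d^{\xi_N}(t)\le\bbP\big(\eta^{\xi_N}(t)\neq\eta^{\eta_\vee}(t)\big)+o(1)$, and since $\eta^{\xi_N}\preceq\eta^{\eta_\vee}$ this reduces to showing that the nonnegative, nonincreasing $\cA_t:=\sum_x\big(h^{\eta_\vee}(t,x)-h^{\xi_N}(t,x)\big)$ vanishes by time $T^\ast+o(N)$, where $T^\ast:=\frac{N}{p-q}\max(t_{\rho_0},1-\alpha-\ell,r-1+\alpha)$. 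The bulk part of $\cA_t$ (of order up to $N^2$) is controlled by the hydrodynamic limit with reflecting boundaries — $N^{-2}\cA_{sN}$ converges to $\int_0^1(H_\infty-H_s)(x)\,\dd x$, $H_s,H_\infty$ the macroscopic height profiles built from $\rho(\cdot,\cdot)$ and from equilibrium, which vanishes exactly at the threshold set by $t_{\rho_0}$ — while the front parts (of order $N$) are controlled by the trajectories of the leftmost particle and rightmost empty site, which move at speed exactly $p-q$ until reaching the equilibrium zone, setting the thresholds $1-\alpha-\ell$ and $r-1+\alpha$.

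The hard part, and the main obstacle, is passing from $\cA_t$ small to $\cA_t=0$: one needs the true mixing time, not just the time of macroscopic equilibration. Since $\gap_{N,k}=(\sqrt p-\sqrt q)^2+O(N^{-2})$ is of order one while the $L^2$ distance from $\eta^{\xi_N}(T^\ast,\cdot)$ to $\pi_{N,k}$ can be $e^{\Theta(N^2)}$, no spectral estimate produces the $o(N)$ correction, and one must show pathwise that by time $T^\ast$ the set on which the two coupled interfaces disagree is, with high probability, confined to $O(\sqrt N)$-neighbourhoods of the Burgers shock and of the two fronts, so that — $\cA_t$ decreasing at a rate comparable to the number of boundary sites of this set — it is absorbed within a further $o(N)$ time. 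Making this precise demands a quantitative hydrodynamic limit for the height function (uniform error $o(N)$, ideally $O(\sqrt{N\log N})$) and control of the local smoothing of discrepancies near the fronts; the monotone eigenfunction of the generator, which bounds the drift of $\cA_t$, and censoring or second-class-particle comparisons are the natural tools here, and this is where I expect the bulk of the work to be.

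Finally, Remark \ref{rem:sym} allows the restriction $\alpha\le1/2$, with $(\ell,r,\rho_0)$ transforming under the reflection of the segment; Theorem \ref{th:asep} is the special case of $\xi_N$ with particles at $1,\dots,k$, for which $\ell=0$, $r=1$, $\rho_0=\ind_{[0,\alpha]}$ and a direct computation of the Burgers flow gives $t_{\ind_{[0,\alpha]}}=(\sqrt\alpha+\sqrt{1-\alpha})^2\ge\max(1-\alpha,\alpha)$; and when $k/N\to0$ one has $\alpha=0$, $\rho_0\equiv0$ so $t_{\rho_0}=0$, and $r-1+\alpha\le0$, so the only remaining obstruction is the leftmost particle travelling from $\ell N$ towards the right end, to which the same coupling argument applies and yields the stated limit.
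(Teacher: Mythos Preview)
Your lower-bound strategy is essentially the paper's: the three obstructions (left front, right front, bulk profile) are exactly what Proposition~\ref{th:fronteerlimit2} packages together with Lemma~\ref{lem:localise}.

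The upper bound, however, contains a genuine gap, and it sits precisely where you flag the ``hard part''. You dismiss the spectral route with the remark that the $L^2$ distance at time $T^\ast$ can be $e^{\Theta(N^2)}$, so ``no spectral estimate produces the $o(N)$ correction''. This is where you go wrong. The paper does \emph{not} use a generic $L^2$--gap bound; it uses the specific monotone eigenfunction $f_{N,k}(\xi)=\sum_x \sin(\pi x/N)\bigl(\gl^{\xi(x)/2}-a_{N,k}(x)\bigr)$ coming from the discrete Hopf--Cole transform. The crucial feature of $f_{N,k}$ is that its size depends on the \emph{sup-norm} of the height discrepancy, not on the area $\cA_t$: once $h^{\xi_N}_{t_N}\in A^\gep_{N,k}$ (i.e.\ both fronts are within $\gep N$ of $N-k$, which follows from the \emph{qualitative} hydrodynamic limit and Proposition~\ref{th:fronteerlimit2}), one has
\[
f_{N,k}\bigl(h^{\xi_N}_{t_N}\bigr)-f_{N,k}(\vee)\;\le\; f_{N,k}(\wedge^\gep_{N,k})-f_{N,k}(\vee)\;\le\; CN\,\delta_{\min}(f_{N,k})\,\gl^{\gep N}\,,
\]
so the relevant ``distance'' is $e^{\Theta(\gep N)}$, not $e^{\Theta(N^2)}$. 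Since $f_{N,k}$ contracts at rate $\varrho+\gamma_N\ge(\sqrt p-\sqrt q)^2>0$, an \emph{additional} time $\delta N$ with $\delta>\gep\log\gl/\varrho$ drives the conditional non-merging probability to zero. Choosing $\gep$ small compared to $\delta$ gives exactly the $o(N)$ correction you are after.

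In short: you do not need a quantitative hydrodynamic limit with $o(N)$ error, nor any discrepancy-smoothing argument. The whole point of Section~\ref{sec:squizz} is that the Hopf--Cole eigenfunction converts ``macroscopically close'' (fronts within $\gep N$) into ``microscopically merged'' in time $O(N)$; your area functional $\cA_t$ is the wrong observable for this step. The paper's proof in Subsection~4.6 carries this out almost verbatim from the $\wedge$-case in Subsection~\ref{Subsec:MixingAsep}, with the only wrinkle being that $\xi_N$ need not dominate a stationary draw, which is handled by decomposing $P_t^{\xi_N}-\pi_{N,k}$ over $\xi'\in A^\gep_{N,k}$ and coupling separately with $\vee$ and with $\wedge^\gep_{N,k}$.
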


The definition of $t_{\rho_0}$ is given in Section \ref{sec:hydropro} where we introduce the scaling limit for the process of empirical densities:
it corresponds to the time needed by this scaling limit (the solution of the Burgers equation) to reach its steady state.  

\begin{remark}
 At the cost of introducing some extra notation, we could also state (and prove in the same manner) a counterpart of Theorem \ref{th:asep2} 
 for the biased card shuffling: as for the worst-initial condition case,
 the mixing time simply corresponds to the maximal mixing time of all the associated ASEP projections.
 \end{remark}

 \subsection{Some connections with the literature}

Benjamini et al.~\cite{Benjamini} showed that the mixing time of the biased card shuffling is at most of order $N$ ( or rather $N^2$ in the discrete 
time setup considered therein, see also~\cite{GPR09}). As a lower bound matching up to a constant $\left(\frac{(1-\alpha)N}{p-q}\right)$ can be obtained 
by bounding the travel time for the leftmost particle to come to equilibrium (this argument is given in details in \cite[Section 5]{LevPer16} in the case of small bias), this established pre-cutoff.
 
 As shown in \cite{LevPer16} and in the present work, the mixing time for the ASEP and the biased card shuffling is proportional to 
 $(p-q)^{-1} N$ whenever $p>1/2$ is fixed. 
 On the other hand when $p=1/2$, we know since Wilson \cite{Wil04}, that the mixing time is order $N^2\log N$.
 A natural question which was answered by Levin and Peres in \cite{LevPer16} is: 
 if the asymmetry scales down to zero, how is the expression for the mixing time modified ?
The answer, which is given in \cite{LevPer16}, is that the expression for the mixing time depends 
on the amount of asymmetry distinguishing between three cases: $(p-q)\ge N^{-1}\log N$, $(p-q)\in [N^{-1}, N^{-1}\log N]$ and $(p-q)\le N^{-1}$.
For each of them, an expression for the mixing time is given and pre-cutoff is proved.

As cutoff holds in both the symmetric and the fully asymmetric cases, it is tempting to conjecture that it should hold
for all regimes in between. This problem is the object of a work in preparation \cite{LabLac}.

%While the regime where $p-q$ is of order $N^{-1}\log N$ remains to be understood, 
%the conjectural behavior in the other cases can be obtained by extending the heuristics of the ASEP (when $(p-q)\gg N^{-1}\log N$) and of the simple exclusion process (when $(p-q)\ll N^{-1}\log N$) \cite{Lac16}. 
%
%\begin{conjecture}
%Assume that $p_N>1/2$ depends on $N$ and that $k/N\to \alpha\in (0,1)$.
%\begin{itemize}
% \item [(A)] When $$\lim_{N\to \infty} N (\log N)^{-1}(p_N-q_N)=\infty$$ 
%then for all $\gep\in(0,1)$ we have 
% \begin{equation*}
% \lim_{N\to \infty} \frac{(p-q)\Tm^{N,k}(\gep)}{N}=(\sqrt{\alpha}+\sqrt{1-\alpha})^2,
% \end{equation*}
%  \item [(B)] When $$\lim_{N\to \infty} N (\log N)^{-1}(p_N-q_N)=0,$$ them for all $\gep\in(0,1)$ we have
%  \begin{equation*}
%   \lim_{N\to \infty} \frac{[(p-q)^2 +(\pi/N)^2]\Tm^{N,k}(\gep)}{\log N}=1.
%\end{equation*}
%\end{itemize}
%\end{conjecture}
%Note that the dependence in $\alpha$ of the mixing time disappears when the asymmetry becomes small. 
%The proof of this conjecture is the object of a work in preparation \cite{LabLac}. 

 Let us conclude by mentioning the references \cite{BPMRS,HadWin16} where other types of biased adjacent transpositions are considered: therein, the asymmetry is not fixed but depends on the values of 
 $\sigma(i)$ and $\sigma(i+1)$. For some families of processes of this type, rapid mixing (that is, mixing in polynomial time) is shown.
 It seems that the question of cutoff is largely open for this type chain.

\subsection{Organization of the paper}

In Section \ref{sec:techos}, we introduce alternative descriptions of the ASEP and biased card shuffling in terms of height functions, and we present some monotonicity properties for the dynamics.
This is classical in the study of particle systems and ubiquitous in the literature (see \cite{Rost81} for an early reference). 
Then we present a proof of the identification of the spectral gap using the discrete Hopf-Cole transform. Finally we also treat in that section the particular case
of the TASEP (ASEP with $p=1$).

In Section \ref{sec:proofsth}, we present results (proved in subsequent sections) for the limiting behavior of the particle density (the hydrodynamic limit) and 
the positions of the leftmost particle and rightmost empty site.
Using these results and the materials on the spectral gap, we prove all our main theorems.

In Section \ref{Sec:Hydro} we prove the hydrodynamic result by extending the arguments developed in \cite{LabbeKPZ} to the constant biased case. 
Finally, in Section \ref{sec:lmprmes} we prove the results concerning convergence of the positions of the leftmost particle and the rightmost empty site by combining hydrodynamic limit 
estimates with coupling with a stationary variant of ASEP with a finite number of particles on the infinite line.

Sections \ref{sec:proofsth}, \ref{Sec:Hydro} and \ref{sec:lmprmes} are mostly independent and each of them can be read separately.

\section{Technical preliminaries}\label{sec:techos}

\subsection{Notation}

In some proofs, it is easier to deal with the following alternative notion of distance to equilibrium
\begin{equation}\label{alterdis} 
\bar d(t):= \max_{\xi, \xi' \in \gO} \| P^{\xi}_t-P^{\xi'}_t\|_{TV}.
\end{equation}
This does not raise any issue since the triangle inequality ensures that
$$ d(t)\le \bar d(t)\le 2 d(t).$$
We use the same superscript as for $d(t)$ when using the notation $\bar d(t)$ for one of the Markov chain introduced above.

\subsection{Height-function, monotone coupling}

A classical and convenient equivalent description of the particle system is given by the \textit{height function} obtained through the mapping 
$\eta \mapsto h(\eta)$ defined by 
$$\forall x\in \lint0, N\rint\;,\quad h(\eta)(x):=\sum_{y=1}^x \left(2\eta(y)-1\right) \;.$$
We let $\Omega_{N,k}:=h(\gO^0_{N,k})$ be the set of all such discrete height functions 
(which happen to be discrete bridges from $(0,0)$ to $(N,2k-N)$).
We denote by $(h^{\xi}(t,\cdot))_{t\geq 0}$ the dynamics on the height function: as $\eta\mapsto h(\eta)$ is injective, this contains the same information as the original dynamics and in particular is Markovian.

\begin{figure}\centering\label{Fig1}
	\begin{tikzpicture}[scale=0.3]
	
	%%%%%%%%%%%%%%%%%%%%%%%%
	%%% Discrete Bridge  %%%
	%%%%%%%%%%%%%%%%%%%%%%%%
	% x-y axis
	\draw[->,thin,color=black] (0,0) node[below left] {$0$}-- (14,0) -- (14,0.2) -- (14,-0.2) -- (14,0) node[below] {$N$} -> (15,0);
	\draw[->,thin,color=black] (0,-9) -> (0,7);
	%\draw[-,thin, color=black] (0,-3) -- (0,3);

	% Limiting shape
	\draw[-,gray] (0,0) -- (6,6) -- (14,-2);
	\draw[-,gray] (0,0) -- (8,-8) -- (14,-2);
	
	\draw[-,thick,color=black] (0,0)  -- (1,1) -- (2,2) -- (3,1) -- (4,0) -- (5,-1) -- (6,0) -- (7,-1) -- (8,-2) -- (9,-1) -- (10,-2) -- (11,-1) -- (12,0) -- (13,-1) -- (14,-2);
	
	% Upwards jump
	\draw[-,style=dotted] (4,0) -- (5,1) node[above] {\tiny \mbox{rate} $1-p$} -- (6,0);
	\draw[->] (5,-0.5) -- (5,0.5);
	
	% Downwards jump
	\draw[-,style=dotted] (8,-2) -- (9,-3)node[below] {\tiny \mbox{rate} $p$} -- (10,-2);
	\draw[->] (9,-1.5) -- (9,-2.5);
	
	%\draw[-,thin,color=black] (301.5,0) -- (301.5,-0.05) node[below] {$r$} -- (301.5,0);
	%\draw[-,thin,color=black] (301.5,0) node[below] {$r$};
	
	%\draw[-,thin,color=black] (502,0) -- (502,-0.05) node[below] {$1$} -- (502,0);
	%\draw[-,thin,color=black] (502,0) node[below] {$1$};
	\end{tikzpicture}
	\caption{An example of height function with $k=6$ particles over $N=14$ sites. The interface lives within the grey rectangle.}
\end{figure}
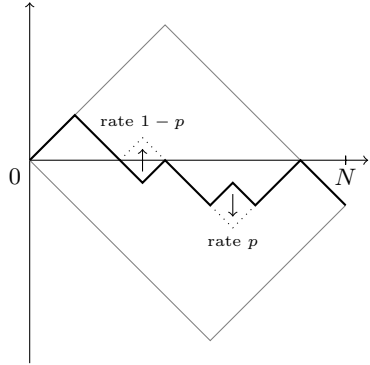

For the record, let us write the generator of the height-function dynamics. For $f: \gO_{N,k} \to \bbR$,
\begin{equation}\label{generatornk}
 \cL_{N,k}f(\xi):= \sum_{x=1}^{N-1} p\left[f(\xi_x)-f(\xi)\right]+(1-p)\left[f(\xi^x)-f(\xi)\right].
\end{equation}
where the configurations $\xi_x$ and $\xi^x$ are respectively defined by
\begin{equation*}
\begin{cases}
 \xi^x(y)=  \xi_x(y)=\xi(y) \quad \text{ for } y\in \llb 0,N\rrb \setminus \{x\}\;,\\
 \xi_x(x)= \max(\xi(x+1),\xi(x-1))-1\;,\\ 
 \xi^x(x)= \min(\xi(x+1),\xi(x-1))+1\;.
\end{cases}
\end{equation*}
In words, this simply means that local maxima (resp. minima) flip into local minima (resp. maxima) at rate $p$ (resp. $q$), see Figure \ref{Fig1}.

Similarly as in (\ref{projek}), we define a function $h_k$ that maps $\cS_N$ to $\gO_{N,k}$, by setting
\begin{equation}\label{defachk}
  h_k(\sigma)(x):= \sum_{i=1}^x \left(2\,\ind_{\{\sigma(i)\ge N-k+1\}}-1 \right)\;.
\end{equation}
Note that the knowledge of all the height-functions $h_k(\sigma)$, $k\in \lint 1,N-1\rint$ is sufficient to recover $\sigma$ completely:
\begin{equation}\label{permuheight}
\sigma(x)=N-k+1 \Leftrightarrow  \begin{cases}
h_k(\sigma)(x)-h_k(\sigma)(x-1)=1\;, \  \ \text{ and } \\

h_{k-1}(\sigma)(x)-h_{k-1}(\sigma)(x-1)=-1\;.\end{cases}
\end{equation}
Hence  $\sigma\mapsto  (h_k(\sigma))_{k=1}^{N-1}$ is injective.

The representation offers a very convenient framework to introduce an order on the state-space. The relation ``$\ge$'' is defined on $\gO_{N,k}\times \gO_{N,k}$, as follows
\begin{equation}\label{eq:orderheight}
\left\{ \ \xi_1\ge \xi_2 \ \right\} \Leftrightarrow  \left\{ \ \forall x\in \llb 0,N\rrb, \   \xi_1(x)\ge \xi_2(x) \right\}.
\end{equation}
The maximal element, which we denote by $\wedge$, corresponds to the configuration where all particles are on the left, and the minimal element 
$\vee$ corresponds to that where all particles are on the right. Even though this is not apparent in the notation, these two elements depend on $k$: we believe this will never raise any confusion in the sequel as the value $k$ will always be clear from the context.

\smallskip

These orders on the spaces $\gO_{N,k}$ induce an order on the group of permutations $\cS_N$ also denoted by ``$\ge$'', via the following relation
\begin{equation*}
 \left\{ \ \sigma_1 \ge \sigma_2 \ \right\}  \Leftrightarrow   \left\{ \ \forall k\in  \llb 1,N-1  \rrb,\  h_k(\sigma_1)\ge h_k(\sigma_2)\ \right\}
\end{equation*}
The minimal element is the identity $\Id$ and the maximal one is the permutation $\sigma_{\max}$ defined in
\eqref{defsigmamax}.

\smallskip

These orders are natural to consider since they are in some sense perserved by the dynamics.
It is indeed a classical result that we can construct a grand coupling that preserves the order in the following sense.

\begin{proposition}\label{orderpres}
There exists a coupling of the processes $(\sigma^{\xi}_t)_{t\ge 0}$ starting from $\xi\in\cS_N$
which satisfies
\begin{equation*}
\left\{ \ \xi\ge \xi' \ \right\} \Rightarrow  \left\{ \ \forall t \ge 0, \ \sigma^{\xi}_t\ge \sigma^{\xi'}_t \ \right\} .
\end{equation*}
For any $k\in \lint 1, N-1 \rint$ there exists a coupling of the processes  $(h^{\xi}(t,\cdot))_{t\ge 0}$ starting from $\xi\in\gO_{N,k}$ such that 
\begin{equation*}
\left\{ \ \xi\ge \xi' \ \right\} \Rightarrow  \left\{ \ \forall t \ge 0, \ h^{\xi}(t,\cdot)\ge h^{\xi'}(t,\cdot) \ \right\} .
\end{equation*}

\end{proposition}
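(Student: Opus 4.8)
The plan is to construct the grand coupling explicitly via a single graphical representation (Poisson clocks) that drives all initial conditions simultaneously, and then verify that the monotone order is preserved by checking that each elementary update is a monotone operation on the relevant space. I would work primarily at the level of the height functions $\Omega_{N,k}$, since the statement for permutations follows by applying the height-function result to all $k$ simultaneously (recall that $\sigma \ge \sigma'$ means $h_k(\sigma)\ge h_k(\sigma')$ for every $k$, and the same graphical construction drives every layer at once).

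First I would set up the graphical representation: to each edge $x\in\llb 1,N-1\rrb$ attach an independent rate-one Poisson process, and at each ring of the clock at $x$ draw an independent Bernoulli$(p)$ label $\pm$. When the clock at $x$ rings with label $+$ we apply the map $\xi\mapsto \xi_x$ (force a local minimum, i.e. push the interface down at $x$ to $\min(\xi(x-1),\xi(x+1))+1$), and with label $-$ we apply $\xi\mapsto \xi^x$ (force a local maximum, push up to $\max(\xi(x-1),\xi(x+1))-1$); the generator \eqref{generatornk} is exactly the one obtained from this construction, noting that when the height is already at the forced value the update does nothing. Running every initial condition $\xi\in\Omega_{N,k}$ through the same clocks and labels defines the coupling.

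The core of the argument is then the monotonicity of a single update. Suppose $\xi_1\ge\xi_2$ pointwise, and fix an edge $x$ and a label, say $+$; I must show $(\xi_1)_x\ge(\xi_2)_x$. Off the site $x$ nothing changes, so the only thing to check is at $x$: $(\xi_1)_x(x)=\min(\xi_1(x-1),\xi_1(x+1))+1 \ge \min(\xi_2(x-1),\xi_2(x+1))+1 = (\xi_2)_x(x)$, which is immediate since $\min$ is monotone in each coordinate. One also needs that the result still lies in $\Omega_{N,k}$, i.e. that after the update consecutive heights still differ by $\pm1$; this is a standard check — forcing $h(x)=\min(h(x-1),h(x+1))+1$ when $|h(x)-h(x\pm1)|=1$ keeps the $\pm1$ constraint, because $h(x-1)$ and $h(x+1)$ have the same parity and differ by $0$ or $2$. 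The label $-$ case is symmetric, using monotonicity of $\max$. By induction on the (a.s. finite on any compact time interval) number of clock rings in $[0,t]$, the order is preserved for all $t\ge 0$, with probability one. For the permutation statement, one applies this to each $h_k$: the same edge clocks and labels act consistently across layers (the update rule \eqref{generator} for $\sigma$ projects, under $h_k$, to the update of $h^{h_k(\sigma)}$), so $\xi\ge\xi'$ in $\cS_N$ is preserved.

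I do not expect a serious obstacle here; the result is classical. The only mildly delicate points are bookkeeping ones: checking that the graphical construction really does realize the generator \eqref{generatornk} (in particular that "do nothing" updates are correctly accounted for, so that the jump rates $p$ and $q$ come out right), and confirming that the height-function updates preserve the bridge constraint defining $\Omega_{N,k}$ so that the coupled processes stay in the correct state space. Making the permutation-level coupling consistent across all $k$ layers from one set of clocks is a matter of observing that the elementary moves $\sigma\mapsto\sigma^{i,\pm}$ are precisely those induced on every $h_k$ by the single-edge height moves, which follows from \eqref{permuheight}.
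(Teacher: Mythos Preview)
Your approach is essentially the same as the paper's (Appendix~\ref{Appendix:Coupling}): both build the grand coupling from a single family of Poisson clocks at the edges---the paper uses two independent rate-$p$ and rate-$q$ processes $P_i,Q_i$ rather than one rate-$1$ process with Bernoulli$(p)$ marks, but these are equivalent by thinning---and then check that each elementary corner-flip preserves the pointwise order on height functions, with the permutation statement following by projecting to every level $k$ simultaneously. One small slip to fix: you have swapped the formulas for $\xi_x$ and $\xi^x$ (the paper defines $\xi_x(x)=\max(\xi(x-1),\xi(x+1))-1$, not $\min+1$), but your monotonicity argument is unaffected since $\max$ is just as coordinatewise monotone as $\min$.
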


We include a proof of this result in Appendix \ref{Appendix:Coupling} for completeness. 
Note that only the coupling for the card shuffling needs to be constructed since its projection \eqref{projek}
yields an order preserving coupling on $\gO_{N,k}$.
Dynamics with such order preserving property are usually called \textit{attractive}. In the rest of the paper, unless it is specified otherwise, we always work with such a grand coupling and use $\bbP$ to denote the associated probability 
distribution.

\subsection{Identification of the spectral gap}\label{sec:gap}

A tool which provides an intuition for identifying the spectral gap as well as the eigenfunctions of the generator
$\cL_{N,k}$ is the celebrated discrete Hopf-Cole transform, 
which was originally introduced by G\"artner~\cite{Gartner88} to derive the hydrodynamic limit of the process in a weakly asymmetric regime 
(when $p-1/2$ scales like $1/N$). 
Let us recall that the continuous Hopf-Cole transform $u\mapsto V(u)$ defined by $V(u)(t,x):=e^{c u(t,x)}$
allows to map the non-linear parabolic PDE
\begin{align*}
\partial_t u = \frac{1}{2}\partial^2_x u - \frac{c}{2} \left[1 - (\partial_x u)^2\right]\;,
\end{align*}
onto the linear parabolic PDE
\begin{align*}
\partial_t V = \frac12 \partial^2_x V - \frac{c^2}{2} V \;.
\end{align*}
Note that usually, the transformation which is considered is rather 
$V(u)(t,x):=e^{c [u(t,x)+\frac{c}{2} t]}$, but in our case we prefer not to have any time dependence in the expression.

Here, in analogy if one sets $u^{\xi}(t,x):=\bbE[h^{\xi}(t,x)]$, then it is not difficult to check from the expression \eqref{generatornk} of the generator $\cL_{N,k}$ that for all
$x\in\lint 1,N-1\rint$ we have
\begin{equation*}
 \partial_t u^{\xi}(t,x)=\frac{1}{2}\gD u^{\xi}(t,x)-(p-q)\bbE\big[\ind_{\{h^\xi(t,x+1)=h^\xi(t,x-1)\}}\big]\;,
\end{equation*}
where the discrete Laplace operator is defined by 
\begin{equation*}
\Delta f(x) = f(x+1)-2f(x)+f(x-1)\;,\quad x\in\lint 1,N-1\rint\;.
\end{equation*}
The second term is the discrete analogue of $[1-(\partial_x u)^2]$ ($\partial_x u$ being replaced by the mean slope on the interval $[x-1,x+1]$). This equation is non-linear in $u$.

\smallskip

In order to obtain a linear equation, we perform a discrete Hopf-Cole transform.
Due to discretization effects, $u\mapsto e^{ 2(p-q) u}$ is not the right transformation to consider. Additionally, we have to take care of the boundary conditions. We set
$$\zeta_x(\xi)=\gl^{\frac{1}{2}\xi(x)} \text{ and } \widetilde V(t,x)=\bbE \left[ \zeta_x(h^\xi(t,\cdot)) \right].$$
A computation yields that for all $x\in \lint 1,N-1\rint$,
\begin{equation*}
\cL_{N,k}(\zeta_x)(\xi) = \sqrt{pq}\, \gD (\lambda^{\frac12 \xi(x)}) - \varrho \lambda^{\frac12 \xi(x)}\;,
\end{equation*}
where
\begin{equation} \label{defrho}
\varrho:= (\sqrt{p}-\sqrt{q})^2\;.
\end{equation}
As $\sqrt{pq}\, \gD-\varrho$ is a linear operator on $\bbR^\bbZ$, it commutes with expectation. This immediately implies that for $x\in \lint 1,N-1\rint$
and $t\ge 0$ we have,
\begin{equation*}
\begin{cases}
	\partial_t \widetilde V(t,x)= (\sqrt{pq}\,\gD- \varrho) \widetilde V(t,x)\;,\quad x\in\llb 1,N-1  \rrb,\\
	\widetilde V(t,0)=1\;,\quad \widetilde V(t,N) =  \gl^{\frac{2k-N}{2}}\;.
\end{cases}
\end{equation*}
Finally, to deal with the boundary conditions, we let $a_{N,k}(x)$ be the solution of the following system
\begin{equation*}
 \begin{cases}
 	(\sqrt{pq}\, \gD-\varrho)a(x)=0\;,\quad x\in \llb 1,N-1  \rrb\;,\\
	 a(0)=1\;,\quad a(N)= \gl^{\frac{2k-N}{2}}  \;.
 \end{cases}
\end{equation*}
We can check that $V(t,x)=\tilde V(t,x)-a_{N,k}(x)$ satisfies for all $t\ge 0$ and $x\in \lint 0,N \rint$,
\begin{equation*}
\begin{cases}
	\partial_t V(t,x)= (\sqrt{pq}\gD- \varrho) V(t,x)\;,\quad x\in\llb 1,N-1  \rrb\;.\\
	V(t,0) = V(t,N) = 0\;.
\end{cases}
\end{equation*}
This equation allows to identify some eigenfunctions of $\cL_{N,k}$ by considering the decomposition of $V$ on a basis of eigenfunctions of $\sqrt{pq}\,\gD- \varrho$.
If one sets for $j=1,\dots,N-1$ 
\begin{equation*}
 f^{(j)}_{N,k}(\xi) := \sum_{x=1}^{N-1} \sin \left( \frac{xj\pi}{N} \right) \left( \gl^{\frac{1}{2}\xi(x)}- a_{N,k}(x) \right),
 \end{equation*}
then the projection of the equation at time zero on the $j$-th Fourier mode yields
\begin{equation*}
\cL_{N,k}  f^{(j)}_{N,k}(\xi) = \sqrt{pq} \sum_{x=1}^{N-1} \sin \left( \frac{xj\pi}{N} \right) \gD\left(\gl^{\frac{1}{2}\xi}- a_{N,k}\right)(x) 
-\varrho f^{(j)}_{N,k}(\xi).
\end{equation*}
Using discrete integration by parts twice (which do not yield boundary terms since both functions vanish at $0$ and $N$), and using the fact that $\sin \left( \frac{\cdot j\pi}{N} \right)$ is an eigenfunction for $\gD$, we obtain that
\begin{align*}
 \sum_{x=1}^{N-1} \gD\left(\sin \left( \frac{\cdot j\pi}{N} \right) \right)(x) \left(\gl^{\frac{1}{2}\xi(x)}- a_{N,k}(x)\right)
 =2\left(\cos \left(\frac{j\pi} N\right) - 1\right)  f^{(j)}_{N,k}(\xi)\;,
\end{align*}
and thus
\begin{equation}\label{Eq:GeneHopfCole}
 \cL_{N,k}  f^{(j)}_{N,k}=-(\varrho+ \gamma_{N}^{(j)}) f^{(j)}_{N,k}\;,
\end{equation}
where
\begin{equation*}
\gamma_{N}^{(j)}= 2\sqrt{pq}\left(1-\cos \left(\frac{j\pi} N\right) \right)=  4\sqrt{pq}\left[\sin\left(\frac{j\pi}{2N}\right)\right]^2.
\end{equation*}
Of course, except in the special cases $k=1$ or $k=N-1$, this is far from providing a complete basis of eigenfunctions (there are a total of $\binom{N}{k}$ of them), 
but this will turn out to be sufficient to identify the spectral gap.

First, and this is the most obvious part, considering the case $j=1$ (which minimizes the quantity $\varrho+ \gamma_{N}^{(j)}$) and setting $f_{N,k}:=f^{(1)}_{N,k}$ and $\gamma_{N}:=\gamma_{N}^{(1)}$, 
we obtain an upper bound on the spectral gap. This is valid for the ASEP, but also for the biased card shuffling as 
$f_{N,k}\circ h_k$ (recall \eqref{defachk}) is an eigenfunction for $\cL_N.$
Thus we have
\begin{equation*}
 \gap_{N,k}\le \varrho+ \gamma_{N} \text{ and } \gap_{N}\le \varrho+ \gamma_{N}.
\end{equation*}
To prove that this eigenvalue really corresponds to the spectral gap
an important observation is that $f_{N,k}$ is an increasing function on $\gO_{N,k}$ in the following sense  
$$\forall \xi, \xi' \in \gO_{N,k},\  \{\ \xi\le \xi'\ \}  \Rightarrow  \left\{ \ f_{N,k}(\xi) \le f_{N,k}(\xi')\ \right\}\;.$$
In \cite[Section 2.7]{Capetal12} it is shown that for a reversible attractive dynamics (see Proposition  \ref{orderpres}) with a maximal and a minimal element,
the eigenfunction corresponding to the spectral gap is increasing.
As it is quite difficult for two increasing functions to be orthogonal this indicates that $\varrho+ \gamma_{N}$ has to be the spectral gap.
We prove it in the next section by making use of the monotone coupling.
This is in fact a classical computation for attractive systems (see e.g. \cite[Section 3.1]{Wil04}) but we shall include it in full for the sake of completeness.

\subsection{Squeezing with monotone coupling}\label{sec:squizz}

Let us start with the case of ASEP with $k$ particles.
Recall that $\bbP$ is an order preserving coupling, and that $\vee$ and $\wedge$ denote the minimal and maximal configurations respectively.
Order-preserving implies in particular that once the dynamics starting from the two extremal height functions merge, the value of  $h^{\xi}(t,\cdot)$ is the same for all $\xi \in \gO_{N,k}$
\begin{equation}\label{merge}
 \left\{ \ h^{\vee}(t,\cdot) = h^{\wedge}(t,\cdot) \ \right\} \ \Rightarrow \  \left\{ \ \forall \xi \ne \xi',\ h^{\xi}(t,\cdot) = h^{\xi'}(t,\cdot) \ \right\}. 
\end{equation}
Using this, we consider $\xi,\xi' \in \gO_{N,k}$ which maximizes the total variation distance at time $t$ (recall \eqref{alterdis}) and argue as follows:
\begin{equation}\label{eq:couplaj}
\bar d_{N,k}(t)=\| P^{\xi}_t- P^{\xi'}_t \|_{TV}\le \P(h^{\xi}(t,\cdot) \ne h^{\xi'}(t,\cdot))
\le \P(h^\vee_t \ne h^\wedge_t ).
\end{equation}
Using the monotonicity of $f_{N,k}$, and the Markov inequality, we obtain that the quantity above is smaller than
\begin{equation}\label{ineq}
\P\left(f_{N,k}(h^\wedge_t) \ge  f_{N,k}(h^\vee_t)+ \delta_{\min}(f_{N,k})\right)
\le  \frac{\E[ f_{N,k}(h^\wedge_t)-f_{N,k}(h^\vee_t)]}{\delta_{\min}(f_{N,k})}\;,
\end{equation}
where
\begin{equation*}
\delta_{\min}(f_{N,k}) = \mintwo{\xi, \xi' \in \Omega_{N,k}}{\xi\ge \xi' \text{ and } \xi\ne \xi'} (f_{N,k}(\xi) - f_{N,k}(\xi')) \;.
\end{equation*}
By \eqref{Eq:GeneHopfCole}, we deduce that
\begin{equation}\label{eq:zioup}
d_{N,k}(t)\le \bar d_{N,k}(t) \le\frac{ \E[f_{N,k}(h^\wedge_t)-f(h^\vee_t)]}{\delta_{\min}(f_{N,k})}
= \frac{f_{N,k}(\wedge)-f_{N,k}(\vee)}{\delta_{\min}(f_{N,k})} e^{-(\gamma_N+\varrho) t}\;.
\end{equation}
In view of \eqref{symptogap} this implies that $\gap_{N,k}\ge (\gamma_N+\varrho)$ so that we conclude that 
\eqref{gapasep} holds.

\smallskip

Let us mention here that for all $k,N$,
\begin{equation}\label{Eq:BoundDelta}
\delta_{\min}(f_{N,k})=\min_{x\in \lint 1,N-1 \rint} \sin\left(\frac{x\pi}{N}\right) (\gl-1) \gl^{\vee(x)}\ge \frac{(\lambda-1)}{N} \lambda^{\frac{k-N}{2}}\;.
\end{equation}
Regarding \eqref{gapbias}, we observe as above that for all $\xi, \xi' \in \cS_N$ which maximize the total variation distance at time $t$, we have
\begin{equation}\label{eq:couplos}
\bar d_{N}(t)=\| Q^{\xi}_t- Q^{\xi'}_t \|_{TV}\le \P(\sigma^{\xi}_t \ne \sigma^{\xi'}_t).
\end{equation}
By injectivity of the height-function (recall \eqref{permuheight}) we have
\begin{equation}\label{eq:quantit}\begin{split}
\P(\sigma^{\xi}_t \ne \sigma^{\xi'}_t)&= \P(\exists k\in \llb 1,N-1\rrb, h_k(\sigma^{\xi}_t) \ne h_k(\sigma^{\xi'}_t ))
\\ &\le \sum_{k=1}^N \P(h_k(\sigma^{\xi}_t) \ne h_k(\sigma^{\xi'}_t)).
\end{split}\end{equation}
Then repeating \eqref{merge} and \eqref{eq:zioup}, it follows that each term in the last sum is smaller than 
$\left(f_{N,k}(\wedge)-f_{N,k}(\vee)\right) e^{-(\gamma_N+\varrho) t}$, which allows to conclude that \eqref{gapbias} holds.

To conclude let us remark that the above method provides us a quantitative upper-bound on the mixing time, but that it does not allow to identify the right constant.
More precisely (details are left to the reader), we have 
\begin{equation*}\begin{split}
 \bar d_{N,k}(t)\le C_\gl N \gl^{N/2}e^{-(\gamma_N+\varrho) t}\;,\quad \bar d(t)\le  C_\gl N^2  \gl^{N/2}e^{-(\gamma_N+\varrho) t}\;,
\end{split}\end{equation*}
for some constant $C_\gl>0$ depending on $\gl$. This gives an upper bound (which perhaps surprisingly does not depend on $k$) of order $ \frac{(\log \gl)}{2\varrho} N$ for both mixing times. 
While this is the right order of magnitude for the mixing time, the constant in front is not optimal. 
However, it can be remarked that for vanishing asymmetry, it gets asymptotically close to the right one for the case of biased shuffle or when $k=N/2$.

\subsection{The special case of $p=1$: TASEP}\label{sec:tasep}

Let us make some comments here about the case $p=1$ for which Theorem \ref{th:asep} is a simple consequence of the work of \cite{Rost81}. 
In that case, the system is mixed when $h^{\xi}(t,\cdot)$ hits the lowest configuration $\vee$ (in particular by monotonicity $\wedge$ is the worst configuration to start with).

Another remark is that $h^{\wedge}(t,\cdot)$ can be coupled with a TASEP dynamics on the infinite line with height function $h^{\wedge,\infty}(t,\cdot)$ 
 starting from 
$$\wedge_{\infty}(x):=x\ind_{\{x\le k\}}+ (2k-x)\ind_{\{x>k\}},$$
in such a way that for all $t$ and $x$
\begin{equation}\label{finiteinfinit}
 h^{\wedge}(t,\cdot):= \max(\vee(x),h^{\wedge,\infty}(t,\cdot)).
\end{equation}
Hence we have when $p=1$
\begin{equation*}
 d^{N,k}(t)= \bbP[ h^{\wedge}(t,\cdot) \ne \vee]= \bbP\left[ h^{\wedge,\infty}(t,N-k)>  -(N-k) \right].
\end{equation*}
Then it follows from \cite[Theorem 1]{Rost81} that when $k/N\to \alpha$, the limit is one if one chooses $t=N[(\sqrt{\alpha}+\sqrt{1-\alpha})^2-\gep]$ and 
 the limit is zero if $t=N[(\sqrt{\alpha}+\sqrt{1-\alpha})^2+\gep]$, thus yielding Theorem \ref{th:asep}.
 
 \smallskip

Since the work of Rost, much more detailed results have in fact been obtained about the scaling for $h^{\wedge,\infty}(t,\cdot)$ and its fluctuations are known 
\cite[Equation (3.7)]{Fer07} to be described by the $\Airy_2$ process. 
This information allows to deduce that when $k=N/2$, $d^{N,N/2}(t)$ drops from one to zero in a time window of order $N^{1/3}$ 
and even to identify the limit
of  $d^{N,N/2}\left(2N+N^{1/3}u\right),$ as a function of $u$ that can be expressed
in terms of the distribution of the $\Airy_2$ process.

An important thing to keep in mind is that a coupling such as \eqref{finiteinfinit} does not exist when $p<1$, the reason being that the boundary condition
have the effect of pushing $h(t,\cdot)$ in the upward direction. This is what makes the analysis of ASEP more difficult, and the main reason why
the question of cutoff has been open for more than a decade.
While we do believe that the statement about the $N^{1/3}$ cutoff window and profile should remain valid when $p\in(0,1)$, and also for every $\alpha\in(0,1)$, 
they remain at this stage, challenging conjectures.

\section{Getting the mixing times from scaling limits}\label{sec:proofsth}

In the present section, we show how to reduce the proof of the mixing time for both processes to a scaling limit statement about the positions of the leftmost 
particle and the rightmost empty site (Proposition \ref{th:fronteerlimit}). The underlying idea is that the contraction inequality derived from the eigenfunction $f_{N,k}$, while not providing 
a sharp estimate on the mixing time, allows to prove that once
the system is macroscopically close to equilibrium, it mixes rapidly.

\subsection{The hydrodynamic profile}\label{sec:hydropro}

Let $\xi_N$ be a sequence of elements of $\cup_k \Omega_{N,k}^0$ and let us define the associated sequence of empirical densities
$$ \rho^N_t(dy) = \frac1{N}\sum_{x=1}^N \eta^{\xi_N}\left(\frac{Nt}{p-q},x\right) \delta_{x/N}(dy)\;.$$
Notice that $\rho^N_t$ belongs to the convex set $\cM$ of measures on $[0,1]$ with total-mass at most $1$. We endow this set with the topology of weak convergence, see \eqref{Eq:WeakCV}.\\

We assume that $\rho^N_0$ converges weakly to some limiting density $\rho_0$: this is a harmless assumption since the sequence $\rho^N_0$ is tight. As $\langle \rho^N_0,\varphi\rangle \leq \frac1{N}\sum_{x=1}^N \varphi(x/N)$ for any $\varphi\in \cC([0,1],\bbR_+)$, it is simple to check that $\rho_0$ necessarily belongs to the dual of $L^1$, namely to $L^\infty$, and satisfies $\rho_0(x)\in [0,1]$ for almost every $x\in [0,1]$. We also let $h^{\xi_N}(t,x)$ be the associated height function, and we define $u^N(t,x) = \frac1{N}h(\frac{N}{p-q}t,xN)$ for all $t\geq 0$ and $x\in[0,1]$.

\begin{theorem}\label{Th:HydroGene}
The sequence $\rho^N$ converges in distribution in the Skorohod space $\bbD(\bbR_+,\cM)$ towards the unique entropy solution of the 
inviscid Burgers equation with zero-flux boundary conditions:
\begin{align}\label{Eq:BurgersZero}
\begin{cases}
\partial_t \rho = -\partial_x \big( \rho(1-\rho) \big)\;,\qquad t>0\;, x\in(0,1)\;,\\
\rho(t,x)(1-\rho(t,x)) = 0\;,\qquad  \quad t>0\;,x\in\{0,1\}\;,\\
\rho(0,\cdot) = \rho_0(\cdot)\;.
\end{cases}
\end{align}
Furthermore, the sequence $u^N$ converges in $\bbD(\bbR_+,\cC([0,1]))$ towards the integrated solution $u(t,x) = \int_0^x (2\rho(t,y)-1)\dd y$.
\end{theorem}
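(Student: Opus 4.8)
The plan is to follow the scheme of \cite{LabbeKPZ}, where the scaling limit of the empirical density was established for the exclusion process on the segment in the weakly asymmetric regime, and to adapt it to a fixed bias $p\in(1/2,1]$. The decisive difference is that the relevant space-time scaling is now hyperbolic --- time $Nt/(p-q)$, space $xN$ --- so that the limit is the \emph{inviscid} Burgers equation and one must prove convergence to its \emph{entropy} solution rather than to a classical one. The proof is organised in the three classical steps of the entropy method (see \cite{KipLan}): (i) tightness of the laws of $(\rho^N)_{N\geq 1}$ in $\bbD(\bbR_+,\cM)$; (ii) the identification of every subsequential limit as an entropy solution of \eqref{Eq:BurgersZero}; (iii) uniqueness of such a solution, which --- the limit being deterministic --- upgrades the convergence to convergence in probability. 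The statement for $u^N$ will then follow by a soft argument.

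For (i) one writes the Dynkin decomposition of $t\mapsto\langle\rho^N_t,\varphi\rangle$ for $\varphi\in\cC^2([0,1])$. After the time change the drift equals $N^{-1}\sum_{x}\varphi'(x/N)\,\eta(x)(1-\eta(x+1))$ up to a symmetric second-order term of order $N^{-1}$ (this is the whole difference with the KPZ regime of \cite{LabbeKPZ}, where it survives) and up to a gradient remainder that disappears after a discrete summation by parts, while the martingale part is negligible (its bracket is of order $N^{-1}$). Since $\cM$ equipped with the topology \eqref{Eq:WeakCV} is compact, tightness reduces to a bound on the time modulus of continuity of $\langle\rho^N_t,\varphi\rangle$, which is immediate from this decomposition via Aldous' criterion.

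Part (ii) is the core of the matter. It demands, first, the replacement of the microscopic current $\eta(\tfrac{Ns}{p-q},x)(1-\eta(\tfrac{Ns}{p-q},x+1))$ by $\rho(s,x)(1-\rho(s,x))$ and, second, the validity in the limit of the Kruzhkov entropy inequalities $\partial_t|\rho-c|+\partial_x\big[\mathrm{sgn}(\rho-c)\,(f(\rho)-f(c))\big]\leq 0$ in the interior, for every $c\in[0,1]$, with $f(\rho)=\rho(1-\rho)$. In the asymmetric, hyperbolic regime no entropy-dissipation bound is available to run a one-block estimate, so, following Rezakhanlou \cite{Reza}, both facts are obtained \emph{simultaneously} from the basic coupling and attractiveness (Proposition \ref{orderpres}): the number of discrepancies between two coupled copies of the dynamics is non-increasing, which is exactly the macroscopic contraction that yields the entropy inequalities. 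The feature specific to the segment is that the invariant measure $\pi_{N,k}$ is not a product measure, so the comparison processes cannot be chosen stationary on the whole segment; instead one compares, inside a mesoscopic box strictly away from the endpoints, with the infinite-line ASEP started from a homogeneous product Bernoulli state --- the same ``stationary ASEP on the line'' used in Section \ref{sec:lmprmes} to track the fronts. At the endpoints one uses that there is no reservoir, hence no bond across $x\in\{0,1\}$, so that no boundary flux appears in the Dynkin identity even when $\varphi$ does not vanish at $\{0,1\}$; this is the weak form of the zero-flux condition $\rho(1-\rho)=0$ at $x\in\{0,1\}$, and the matching boundary entropy condition (of Bardos--Le Roux--N\'ed\'elec type, in its zero-flux version) comes again from the coupling argument localised near the boundary, together with the elementary fact that, since $p>q$, particles cannot accumulate against the left wall, nor holes against the right one. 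I expect this boundary analysis --- absent both from the line case \cite{Reza} and from the viscous regime of \cite{LabbeKPZ} --- to be the \textbf{main obstacle}.

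For (iii), \eqref{Eq:BurgersZero} is a scalar conservation law with zero-flux boundary conditions, for which existence and uniqueness of an $L^\infty$ entropy solution is classical; uniqueness forces all subsequential limits to agree, so that $\rho^N\to\rho$ in probability in $\bbD(\bbR_+,\cM)$. Finally $u^N(t,x)=2\langle\rho^N_t,\ind_{[0,x]}\rangle-x$ up to an $O(N^{-1})$ error, and $x\mapsto u^N(t,x)$ is $1$-Lipschitz uniformly in $N$ and $t$; hence weak convergence of $\rho^N_t$ together with this equicontinuity gives, for each $t$, uniform convergence $u^N(t,\cdot)\to u(t,\cdot)=\int_0^x(2\rho(t,y)-1)\dd y$ in $\cC([0,1])$. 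Since the entropy solution lies in $\cC(\bbR_+,L^1)$ and $u$ is its antiderivative in $x$, one has $u\in\cC(\bbR_+,\cC([0,1]))$, and the Skorohod convergence of $\rho^N$ transfers to convergence of $u^N$ in $\bbD(\bbR_+,\cC([0,1]))$.
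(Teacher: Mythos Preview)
Your outline follows the same overall strategy as the paper --- tightness, microscopic entropy inequalities via coupling in the spirit of \cite{Reza}, uniqueness --- and you correctly single out the boundary treatment as the crux. But on precisely that point your proposal remains a wish rather than an argument, and the paper's solution involves two devices that are absent from your sketch.

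First, the boundary entropy terms. You write that ``no boundary flux appears in the Dynkin identity'' and that ``the matching boundary entropy condition \ldots\ comes again from the coupling argument localised near the boundary''. The first observation is true for $\eta$ alone, but it does not produce the terms $(0-\kappa)^\pm\varphi(t,0)+(1-\kappa)^\pm\varphi(t,1)$ that the entropy formulation (Definition~\ref{Def:Burgers}) requires; these must come from the comparison process. The paper's device is to introduce, alongside the infinite-line processes $(\heta,\hzeta)$ you allude to, a \emph{segment} process $\zeta$ with reservoirs at the endpoints (creation at site $1$ with rate proportional to $\kappa$, annihilation at site $N$ with rate proportional to $1-\kappa$) tuned so that $\otimes\mathrm{Be}(\kappa)$ is stationary for $\zeta$. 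The boundary generator $\tilde\cL^{\text{bdry}}$ acting on $(\eta-\zeta)^\pm$ is then what yields, after a computation, exactly the required boundary terms in the microscopic inequality (Lemma~\ref{Lemma:MicroIneq}). In parallel, the paper recasts the zero-flux problem \eqref{Eq:BurgersZero} as the Dirichlet problem with data $\rho(t,0)=0$, $\rho(t,1)=1$ (via \cite{BFK07,LabbeKPZ}), which is what makes the boundary constants $0$ and $1$ appear in the inequalities. Your ``localised coupling near the boundary'' does not substitute for this mechanism.

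Second, the passage to general initial data. The Rezakhanlou replacement \eqref{Eq:MicroRepl} and the coupling of initial conditions rely on the comparison process being a product of $\mathrm{Be}(\kappa)$ and on $\eta_0-\zeta_0$ having a bounded number of sign changes; the paper enforces this by first proving the result for product Bernoulli initial laws with piecewise constant density (Assumption~\ref{Assumption:Density}, Theorem~\ref{Th:HydroDensity}), and then upgrading to an arbitrary deterministic $\xi_N$ by a monotone sandwich between two such profiles $u_0^{\pm,\epsilon}$ and the $L^1$-contractivity of the entropy solution. Your sketch jumps directly to general $\xi_N$, where the initial coupling step and the sign-change control are not available.

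The tightness argument and the final passage from $\rho^N$ to $u^N$ are fine and match the paper.
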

The Skorohod spaces denote the spaces of cadlag functions and are endowed with the Skorohod topology, see Billingsley~\cite{Billingsley}.

\begin{remark}
 Note that intuitively $u$ should be the solution of the following equation 
 \begin{align}\label{Eq:Burgers1}
\begin{cases}
\partial_t u = -\frac{1}{2}(1-(\partial_x u)^2)\;,\qquad t>0\;, x\in(0,1)\;,\\
u(0,\cdot) = 0 \quad  \text{and} \quad u(1,\cdot)=2\alpha-1\;, \qquad  t>0\;,x\in\{0,1\}\;,\\
u(0,\cdot) = \int_0^{\cdot} (2\rho(t,y)-1)\dd y \;.
\end{cases}
\end{align}
However, the precise connection between \eqref{Eq:BurgersZero} and \eqref{Eq:Burgers1} has not been established in the literature. So we stick to the problem \eqref{Eq:BurgersZero} formulated in terms of particle density, as it is sufficient to our purpose.
\end{remark}

\begin{remark}
All the solutions of \eqref{Eq:BurgersZero} obtained in Theorem \ref{Th:HydroGene} stabilize in finite time to an equilibrium profile, given by 
 $\ind_{[1-\alpha,1]}$ where $\alpha=\int_{[0,1]}\rho_0(x)\dd x$. Indeed, the explicit solution starting from $\wedge$ stabilizes in finite time and stays above any other solution, by monotonicity of the particle system. We define thus 
 \begin{equation}\label{def:trho}
  t_{\rho_0}:=\inf\{ t>0\ | \ \rho(t,\cdot)=  \ind_{[1-\alpha,1]} \}.
 \end{equation}
This is the quantity involved in the expression of the mixing time in Theorem \ref{th:asep2}.
Note that while in the extremal case described in Theorem \ref{th:asep}, the mixing time coincides  
with $N(p-q)^{-1}t_{\rho_0}$ for $\rho_0:=\ind_{[0,\alpha]}$, this is not always the case when starting from an arbitrary condition as the position of 
the leftmost particle and rightmost empty site can, in some cases, take a longer time to reach equilibrium than $\rho$.
\end{remark}

The precise definition of the entropy solutions of (\ref{Eq:BurgersZero}) as well as the proof of the convergence is postponed to Section \ref{Sec:Hydro}.

Let us describe the scaling limit of the height function starting from the maximal element $\wedge$. This object is relevant only when the density of particle $\alpha$ is strictly positive: at the end of the present subsection, we introduce the relevant quantities when $\alpha=0$. For $\alpha\in (0,1/2]$, we define $\vee_{\alpha} : [0,1] \to \bbR$, $\wedge_{\alpha} : [0,1] \to \bbR$ which correspond to the extremal macroscopic states, 
\begin{equation*}
\vee_{\alpha}(x):= \max(-x,x-2(1-\alpha))\;,\qquad \wedge_{\alpha}(x):= \min(x,2\alpha-x)\;,
\end{equation*}
and let $g_{\alpha}: \bbR_+\times [0,1]\to \bbR$ be defined as follows
\begin{equation*}\begin{split}
g^0_{\alpha}(t,x)&:= \begin{cases} \alpha-\frac{t}{2}-\frac{(x-\alpha)^2}{2t}, \quad &\text{ if } |x-\alpha| \le t,  \\
\wedge^{\alpha}(x), \quad & \text{ if } |x-\alpha| \ge t,
\end{cases}\\
g_{\alpha}(t,x)&:= \max(\vee_{\alpha}(x), g^0_{\alpha}(x,t)).
\end{split}
\end{equation*}

As a consequence of Theorem \ref{Th:HydroGene}, we obtain the following statement.

\begin{corollary}\label{th:hydrosimples}
 
Let $p\in (1/2,1]$ and $\alpha \in (0,1/2]$. For any $\gep>0$ and any $T>0$, we have
 
 \begin{equation*}
  \lim_{N\to \infty} \bbP \left[ \sup_{t\in [0,T]}\sup_{x\in [0,1]} \left|\frac1{N}h^{\wedge}\left(\frac{Nt}{p-q},Nx \right)- g_{\alpha}(t,x)\right|\le \gep \right]=0\;.
 \end{equation*}

 \end{corollary}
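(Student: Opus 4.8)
\textbf{Proof proposal for Corollary \ref{th:hydrosimples}.}

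I note first that, as worded, the statement asks us to prove
\[
  \lim_{N\to \infty} \bbP \left[ \sup_{t\in [0,T]}\sup_{x\in [0,1]} \left|\tfrac1{N}h^{\wedge}\left(\tfrac{Nt}{p-q},Nx \right)- g_{\alpha}(t,x)\right|\le \gep \right]=0\;.
\]
This is almost certainly a typo for ``$\to 1$'', but I take the statement at face value and explain how one would argue. The plan is to \emph{derive this as a formal consequence of Theorem \ref{Th:HydroGene}}, with the only genuine work being the identification of the explicit limit profile with $g_\alpha$. Concretely: Theorem \ref{Th:HydroGene} applied to the initial condition $\xi_N = \wedge$ (all particles on the left, so $\rho^N_0 \to \rho_0 = \ind_{[0,\alpha]}$) gives that $u^N \to u$ in $\bbD(\bbR_+,\cC([0,1]))$, where $u(t,x)=\int_0^x(2\rho(t,y)-1)\dd y$ and $\rho$ is the entropy solution of \eqref{Eq:BurgersZero} started from $\ind_{[0,\alpha]}$. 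Since $u^N(t,x) = \frac1N h^{\wedge}(\frac{N}{p-q}t, xN)$, the corollary is exactly the statement that $u(t,x) = g_\alpha(t,x)$ on $[0,T]\times[0,1]$ \emph{plus} an upgrade of the Skorohod-space convergence to the claimed (one-sided, but really two-sided) uniform bound.

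\textbf{Step 1: identify the entropy solution of \eqref{Eq:BurgersZero} from $\ind_{[0,\alpha]}$.} This is the main computational step, but it is classical Burgers/Lax–Oleinik bookkeeping on a finite interval with zero-flux boundary conditions. The initial density $\ind_{[0,\alpha]}$ has a single downward jump at $x=\alpha$, which under the flux $\rho(1-\rho)$ produces a rarefaction fan emanating from $(0,\alpha)$; the characteristic speeds are $1-2\rho \in [-1,1]$, so the fan occupies $\{|x-\alpha|\le t\}$ and inside it $\rho$ is the linear interpolation, i.e. $2\rho-1 = (x-\alpha)/t$. Integrating, $u(t,x) = \alpha - \tfrac t2 - \tfrac{(x-\alpha)^2}{2t}$ inside the fan and $u(t,x)=\wedge_\alpha(x)=\min(x,2\alpha-x)$ outside (before the fan reaches a given point, the height is unchanged from $u(0,\cdot)=\int_0^\cdot(2\ind_{[0,\alpha]}-1)$). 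This gives exactly $g^0_\alpha(t,x)$. The zero-flux boundary at $x=0$ and $x=1$ forces $\rho\in\{0,1\}$ there; as the fan interacts with the boundaries it creates the constraint that $u$ cannot drop below $\vee_\alpha(x)=\max(-x, x-2(1-\alpha))$ (the minimal admissible height profile, corresponding to all particles having reached the right end), and the entropy solution is the maximum $g_\alpha = \max(\vee_\alpha, g^0_\alpha)$ — here I would invoke the monotonicity built into the particle system (Proposition \ref{orderpres}) and the fact that $\vee$ is a fixed point, exactly as indicated in the remark following Theorem \ref{Th:HydroGene}, rather than re-deriving boundary-layer analysis for Burgers from scratch. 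One checks $g_\alpha$ is Lipschitz in $(t,x)$, is a weak solution, and satisfies the Oleinik one-sided inequality, hence equals the entropy solution by uniqueness.

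\textbf{Step 2: from Skorohod convergence to uniform control.} Theorem \ref{Th:HydroGene} gives $u^N \to u$ in $\bbD(\bbR_+,\cC([0,1]))$ with deterministic limit $u=g_\alpha$; since $g_\alpha$ is continuous in $t$ (it is jointly Lipschitz away from $t=0$ and continuous at $t=0$), convergence in Skorohod topology to a continuous path is equivalent to convergence in the uniform-on-compacts topology, so $\sup_{t\le T}\|u^N(t,\cdot)-g_\alpha(t,\cdot)\|_\infty \to 0$ in probability. This yields $\bbP[\sup_{t\in[0,T]}\sup_{x\in[0,1]}|u^N(t,x)-g_\alpha(t,x)|\le\gep]\to 1$ for every $\gep>0$. \emph{Thus the displayed probability converges to $1$, not $0$}; to prove the corollary exactly as printed, one would have to exhibit an event of vanishing probability, which contradicts the above — so I record that the inequality sign is a misprint, and the correct and provable statement is that the limit equals $1$, which the argument above establishes.

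\textbf{Main obstacle.} There is no real analytic difficulty once Theorem \ref{Th:HydroGene} is granted; the only substantive work is the explicit solution of the boundary-value Burgers problem in Step 1, and in particular justifying the ``$\max$ with $\vee_\alpha$'' structure of $g_\alpha$ cleanly — i.e., checking that the naive Lax–Oleinik formula $g^0_\alpha$ must be truncated from below by the absorbing profile $\vee_\alpha$ because of the zero-flux (reflecting) boundary, and that the result is indeed the unique entropy solution. I would handle this by combining the variational/Hopf–Lax representation with the particle-system monotonicity ($\vee \le h^{\wedge}(t,\cdot)$ always, and $h^\wedge$ decreasing in $t$ by attractiveness since $\wedge$ is maximal), which pins down the limit without delicate PDE boundary-layer estimates.
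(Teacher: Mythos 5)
Your proposal follows essentially the same route as the paper, which offers no separate argument for this corollary: it is presented as a direct consequence of Theorem \ref{Th:HydroGene} applied to $\xi_N=\wedge$ (so $\rho_0=\ind_{[0,\alpha]}$), with $g_\alpha$ being the integrated entropy solution and the Skorohod convergence upgrading to locally uniform convergence since the limit is continuous and deterministic; your reading of the printed ``$=0$'' as a misprint for ``$=1$'' is also consistent with how the corollary is used later, e.g.\ in \eqref{eq:hydrocsq}. Two small points: inside the rarefaction fan the slope is $2\rho-1=-(x-\alpha)/t$ (your sign is off, though the integrated formula you then write is the correct one), and to invoke uniqueness for the initial--boundary value problem you should verify the boundary entropy (BLN-type) inequalities of Definition \ref{Def:Burgers} (or use the whole-line extension with density $0$ on $(-\infty,0)$ and $1$ on $(1,\infty)$), since a weak solution satisfying only the interior Oleinik condition is not automatically the entropy solution of the boundary problem.
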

 
Let us introduce $\ell_\alpha(t), r_{\alpha}(t)\in [0,1]$, which for $t\le (\sqrt{\alpha}+\sqrt{1-\alpha})^2$ are the extremities 
of the interval on which  $g_\alpha(t,\cdot)$ and $g^0_{\alpha}(t,x)$ coincide):

\begin{equation*}\begin{split}
\ell_\alpha(t)&=\begin{cases} 0 \quad &\text{ if } t\le \alpha\;,\\ 
                      (\sqrt{t}-\sqrt{\alpha})^2 \quad &\text{ if } t\in \left(\alpha, (\sqrt{\alpha}+\sqrt{1-\alpha})^2 \right)\;,\\
                      1-\alpha \quad &\text{ if } t\ge   (\sqrt{\alpha}+\sqrt{1-\alpha})^2\;,
\end{cases}
\end{split}
\end{equation*}
and
\begin{equation*}
\begin{split}
r_{\alpha}(t)&=\begin{cases} 1 \quad &\text{ if } t\le 1-\alpha\;,\\ 
                      1-(\sqrt{t}-\sqrt{1-\alpha})^2 \quad &\text{ if } t\in \left(1-\alpha, (\sqrt{\alpha}+\sqrt{1-\alpha})^2 \right)\;,\\
                      1-\alpha \quad &\text{ if } t\ge   (\sqrt{\alpha}+\sqrt{1-\alpha})^2\;.
\end{cases}\end{split}
 \end{equation*}
 
When $\alpha = 0$, the hydrodynamic limit does not evolve since 
the system is macroscopically at equilibrium at time $0$ even though it is far from the microscopical equilibrium. We introduce:
$$ \ell_0(t)=  t\wedge 1\quad \text{ and }\quad r_0(t)=1,\qquad t\ge 0\;.$$

\subsection{Scaling limit for rightmost particle and leftmost empty site}

We introduce now the key statements that will allow us to get sharp mixing time estimates: given $\xi\in \gO_{N,k}$, we let $\ell_{N,k}(\xi)$ and $r_{N,k}(\xi)$ 
denote the position of the leftmost particle and the rightmost empty site respectively (which almost corresponds to the quantities introduced in \eqref{def1:lknrkn} for $\xi\in  \gO^{0}_{N,k}$). In terms of height function this translates into
\begin{equation}\label{def:lknrkn}
\begin{split}
\ell_{N,k}(\xi)&=\max\{x\in\llb 0, N \rrb \ : \  \xi(x)= -x \},\\
r_{N,k}(\xi)&=\min\{x\in\llb 0, N \rrb  \ : \  \xi(x)= x-2(N-k) \}.
\end{split}
\end{equation}
We also set 
\begin{equation}\label{def:LLRR}
L_{N,k}(t):=\ell_{N,k}(h^{\wedge}_t)  \quad \text{ and }  \quad R_{N,k}(t):=r_{N,k}(h^{\wedge}_t). 
\end{equation}
Our result is the following.
 \begin{proposition}\label{th:fronteerlimit}
Let $p\in (1/2,1]$ and $\alpha \in [0,1/2]$. For any $t\geq 0$ we have the following convergences in probability
 \begin{equation*}
  \limtwo{N\to \infty}{k/N\to \alpha} N^{-1}L_{N,k}\left( \frac{Nt}{p-q}  \right)=\ell_{\alpha}(t)\; \text{ and }
  \limtwo{N\to \infty}{k/N\to \alpha} N^{-1} R_{N,k}\left( \frac{Nt}{p-q}  \right)=r_{\alpha}(t)\;.
  \end{equation*}
\end{proposition}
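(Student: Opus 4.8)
The plan is to prove the two one‑sided estimates for each of $L_{N,k}$ and $R_{N,k}$, and to reduce half of the work away by symmetry. By particle--hole duality composed with the reflection $x\mapsto N+1-x$ (the manipulation of Remark~\ref{rem:sym}), the process $\mathbf 1-\eta^{\wedge}_t\circ\sigma_{\max}$ is an ASEP with opposite bias and $N-k$ particles, under which the image of the leftmost particle of $\eta^{\wedge}_t$ is the rightmost empty site; since $r_{1-\alpha}$ is the corresponding reflection of $\ell_\alpha$, it suffices to establish $N^{-1}L_{N,k}(Nt/(p-q))\to\ell_\alpha(t)$ in probability. Throughout, $t$ denotes the macroscopic time, i.e.\ we observe $h^{\wedge}$ at microscopic time $Nt/(p-q)$, and we write $\rho^\ast(t):=\sqrt{\alpha/t}$ for the macroscopic density appearing at the left edge of the rarefaction fan, noting that $\frac{d}{dt}\ell_\alpha(t)=1-\rho^\ast(t)$ for $t\in(\alpha,(\sqrt\alpha+\sqrt{1-\alpha})^2)$.

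\emph{Upper bound on $L_{N,k}$.} Recall from \eqref{def:lknrkn} that $L_{N,k}(s)=\ell_{N,k}(h^{\wedge}_s)$ is the largest $x$ with $h^{\wedge}_s(x)=-x$, i.e.\ (up to $\pm1$) the position of the leftmost particle. Fix $t$ with $\ell_\alpha(t)<1-\alpha$, which is equivalent to $\ell_\alpha(t)<r_\alpha(t)$, and pick $x_0\in(\ell_\alpha(t),\ell_\alpha(t)+\gep)$. On the open interval $(\ell_\alpha(t),r_\alpha(t))$ the function $g_\alpha(t,\cdot)=g^0_\alpha(t,\cdot)$ is a strictly concave parabola lying strictly above the piecewise affine map $\vee_\alpha$, so there is $c=c(t,\gep)>0$ with $g_\alpha(t,x_0)\geq\vee_\alpha(x_0)+c\geq -x_0+c$. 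Corollary~\ref{th:hydrosimples} then gives $\P\big[\tfrac1N h^{\wedge}(\tfrac{Nt}{p-q},\lfloor Nx_0\rfloor)\geq -x_0+\tfrac c2\big]\to1$, which forces $h^{\wedge}(\tfrac{Nt}{p-q},\lfloor Nx_0\rfloor)>-\lfloor Nx_0\rfloor$, hence at least one particle among sites $\{1,\dots,\lfloor Nx_0\rfloor\}$, hence $L_{N,k}(\tfrac{Nt}{p-q})<N(\ell_\alpha(t)+\gep)$ with probability tending to one. When instead $\ell_\alpha(t)=1-\alpha$ we use the deterministic bound $L_{N,k}(s)\leq N-k$ (a segment with $k$ particles cannot have its leftmost particle right of site $N-k+1$), which, since $k/N\to\alpha$, gives $\limsup_N L_{N,k}(\tfrac{Nt}{p-q})/N\leq 1-\alpha=\ell_\alpha(t)$. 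In both cases $\limsup_N L_{N,k}(\tfrac{Nt}{p-q})/N\leq\ell_\alpha(t)$; since $L_{N,k}\geq0$ this also covers $t\leq\alpha$.

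\emph{Lower bound on $L_{N,k}$ (the main point).} We must show $\liminf_N L_{N,k}(\tfrac{Nt}{p-q})/N\geq\ell_\alpha(t)$, which is non‑trivial only for $t>\alpha$. The obstacle is that Corollary~\ref{th:hydrosimples} controls the particle density only up to errors of order $o(N)$, whereas the event $\{L_{N,k}>N(\ell_\alpha(t)-\gep)\}$ requires \emph{exactly zero} particles in $\{1,\dots,\lfloor N(\ell_\alpha(t)-\gep)\rfloor\}$. The strategy is a bootstrap in time over a mesh $0=t_0<t_1<\dots<t_m=t$ with $\max_i(t_{i+1}-t_i)=\delta$: assuming that at macroscopic time $t_i$ the leftmost particle is at $\geq N(\ell_\alpha(t_i)-o(1))$ and that the empirical density just to its right is $\leq\rho^\ast(t_i)+o(1)$ (both known from Theorem~\ref{Th:HydroGene}/Corollary~\ref{th:hydrosimples} and the previous step), one estimates the displacement of the leftmost particle over $[t_i,t_{i+1}]$. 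Over such a short interval the environment seen from the leftmost particle is, using the monotone coupling of Proposition~\ref{orderpres}, comparable with that of a stationary variant of the ASEP with finitely many particles on the half‑line whose bulk density to the right of the boundary particle is frozen at a value slightly above $\rho^\ast(t_i)$; for such a process the boundary particle satisfies a law of large numbers with macroscopic speed $1-\rho^\ast$, so over $[t_i,t_{i+1}]$ our leftmost particle recedes from $N\ell_\alpha$ by at most $o(N)+o(\delta)N$. Summing over $i$, using $\frac{d}{dt}\ell_\alpha=1-\rho^\ast$, and letting $\delta\to0$ after $N\to\infty$ yields the claim for all $t$ with $\ell_\alpha(t)<1-\alpha$; on the plateau $t\geq(\sqrt\alpha+\sqrt{1-\alpha})^2$ one runs the same comparison with density frozen near $1$ (equivalently: the stationary boundary particle does not macroscopically recede from the stable edge of the equilibrium block), which gives $\liminf_N L_{N,k}/N\geq1-\alpha$. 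Finally the case $p=1$ is already contained in \cite{Rost81} as recalled in Section~\ref{sec:tasep}.

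I expect the decisive difficulty to be exactly the lower bound sketched above: converting the $o(N)$‑level hydrodynamic description of the density near $x=0$ into the microscopic statement that the \emph{single} leftmost particle tracks the front $\ell_\alpha(t)$ with $o(N)$ precision, uniformly over the relevant density range. This is precisely where the coupling with the stationary finite‑particle ASEP on the line — developed in Section~\ref{sec:lmprmes} — does the real work, supplying the sharp law of large numbers for a boundary particle driven by a stationary bulk that is unavailable from soft hydrodynamic arguments alone. The remaining ingredients (the symmetry reduction, the upper bound, the two plateaus $t\leq\alpha$ and $t\geq(\sqrt\alpha+\sqrt{1-\alpha})^2$, and the passage through the TASEP case) are comparatively routine.
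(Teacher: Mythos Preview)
Your symmetry reduction to $L_{N,k}$ and the upper bound via Corollary~\ref{th:hydrosimples} are correct and match the paper. The problem is the lower bound.

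The assertion that in a stationary half-line ASEP with bulk density just above $\rho^\ast$ the leftmost particle has macroscopic speed $1-\rho^\ast$ is not right. A leftmost particle facing density $\rho$ on its right jumps right at rate $p(1-\rho)$ and left at rate $q$, so its microscopic drift is $(p-q)-p\rho$, i.e.\ macroscopic speed $1-\tfrac{p}{p-q}\rho$, which is \emph{strictly smaller} than $1-\rho$ whenever $q>0$. Integrating that along your mesh yields a lower bound that undershoots $\ell_\alpha(t)$ by $\big(\tfrac{p}{p-q}-1\big)\int_\alpha^t\rho^\ast(s)\,ds$, an order-one defect that does not disappear as $\delta\to0$. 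The value $1-\rho^\ast$ is the Rankine--Hugoniot speed of the $0/\rho^\ast$ shock, not the speed of a tagged leftmost first-class particle; turning one into the other would require a genuine second-class-particle argument, which neither your sketch nor Section~\ref{sec:lmprmes} supplies.

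The paper's route sidesteps this entirely and is both simpler and sharper. There is no time mesh and no density-dependent speed. One compares the first $n=\delta N$ particles of $\eta$ with an $n$-particle ASEP $\hat\eta$ on $\bbZ$ started far to the left, at $\hat\eta_i(0)=N(\ell_\alpha(t)-t-\gep)+i$. Proposition~\ref{th:partiasymp} (proved via the stationary-spacing comparison of Lemma~\ref{damean} and a martingale concentration estimate) says that \emph{all} of these $n$ particles move at essentially full speed $p-q$, with error $O(\sqrt{\delta}\,N)$, uniformly on $[0,Nt/(p-q)]$. The only thing to check is that the coupling $\hat\eta_i\le\eta_i$ survives, i.e.\ that $\hat\eta_n(s)<\eta_{n+1}(s)$ throughout. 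Corollary~\ref{th:hydrosimples} gives $\eta_{n+1}(Ns/(p-q))\ge N\ell_\alpha(s)$, and since $\ell_\alpha$ is $1$-Lipschitz, $\hat\eta_n(Ns/(p-q))\approx N(\ell_\alpha(t)-t+s+\delta-\gep)<N\ell_\alpha(s)$ holds for all $s\in[0,t]$ once $\delta<\gep$. Then $\eta_1(Nt/(p-q))\ge\hat\eta_1(Nt/(p-q))\ge N(\ell_\alpha(t)-\gep-C\sqrt\delta)$ directly. The trick, in other words, is to launch the comparison system from so far behind that it never feels any density at all and therefore travels at speed $1$; the $1$-Lipschitz bound on $\ell_\alpha$ is exactly what guarantees this never conflicts with the hydrodynamic constraint on $\eta_{n+1}$.
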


Let us stress that in the case where $\alpha \in (0,1/2]$, Corollary \ref{th:hydrosimples} does not directly imply Proposition \ref{th:fronteerlimit}. It rather states that $\ell_{\alpha}(t)$ is the smallest point where the particle density is non-zero and $r_\alpha(t)$ is the largest point where it is not equal to $1$ so that, provided the limit exits in probability, 
one deduces the upper bound for $L_{N,k}$ and the lower bound for $R_{N,k}$:
 \begin{equation*}
  \limtwo{N\to \infty}{k/N\to \alpha} N^{-1}L_{N,k}\left( \frac{Nt}{p-q}  \right)\leq\ell_{\alpha}(t)\; \text{ and }
  \limtwo{N\to \infty}{k/N\to \alpha} N^{-1} R_{N,k}\left( \frac{Nt}{p-q}  \right)\geq r_{\alpha}(t)\;.
\end{equation*}
However it does not give the microscopic information that would be necessary to obtain the lower bound for $L_{N,k}$ and the upper bound for $R_{N,k}$. 
Another important observation is that this microscopic information is not contained in our proof of Theorem \ref{Th:HydroGene}:
the technique we use to derive $g_{\alpha}(t,x)$ as a scaling limit, is to show that the particle density is an entropy solution of the inviscid Burgers equation,
and this formulation of the problem does not allow to track the positions of the leftmost particle and the rightmost empty site.

\noindent Let us now introduce
\begin{equation*}
A^{\gep}_{N,k}:= \{ \xi \in \gO_{N,k} \ : \ |\ell_{N,k}(\xi)-N+k|\le \gep N   \text{ and } |r_{N,k}(\xi)-N+k|\le \gep N \}\;,
\end{equation*}
and 
$$t_{\alpha,N}:= \frac{N}{p-q}(\sqrt{\alpha}+\sqrt{1-\alpha})^2\;.$$
As a direct consequence of Proposition \ref{th:fronteerlimit}, we get the following result.

\begin{corollary}\label{cor:hit}
Let $p\in (1/2,1]$ and $\alpha \in [0,1/2]$. We have for any $\delta\ge 0$ and $\gep>0$
 \begin{equation}\label{eq:apres}
  \limtwo{N\to \infty}{k/N\to \alpha}\bbP \left( h^{\wedge}_{(1+\delta)t_{\alpha,N}} \in A^{\gep}_{N,k} \right)=1.
\end{equation}
Moreover for any fixed $\delta>0$, for $\gep \le \gep_0(\delta)$ we have 
 \begin{equation}\label{eq:avant}
  \limtwo{N\to \infty}{k/N\to \alpha}\bbP \left( h^{\wedge}_{(1-\delta)t_{\alpha,N}} \in A^{\gep}_{N,k} \right)=0.
\end{equation}
 
\end{corollary}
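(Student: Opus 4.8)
The plan is to read off both \eqref{eq:apres} and \eqref{eq:avant} from Proposition~\ref{th:fronteerlimit}, applied at the single fixed macroscopic time $t=(1\pm\delta)(\sqrt{\alpha}+\sqrt{1-\alpha})^2$. Writing $T:=(\sqrt{\alpha}+\sqrt{1-\alpha})^2$, one has $(1\pm\delta)t_{\alpha,N}=\tfrac{N}{p-q}\big((1\pm\delta)T\big)$, which is exactly the time window in which Proposition~\ref{th:fronteerlimit} controls the rescaled positions $N^{-1}L_{N,k}$ and $N^{-1}R_{N,k}$. I would first record two elementary facts: under $k/N\to\alpha$ we have $(N-k)/N\to 1-\alpha$; and, dividing by $N$, the event $\{\xi\in A^{\gep}_{N,k}\}$ says precisely that both $N^{-1}\ell_{N,k}(\xi)$ and $N^{-1}r_{N,k}(\xi)$ lie within distance $\gep$ of the point $(N-k)/N$, which itself converges to $1-\alpha$.

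For \eqref{eq:apres}: since $(1+\delta)T\ge T$, the defining formulas give $\ell_\alpha((1+\delta)T)=r_\alpha((1+\delta)T)=1-\alpha$, so Proposition~\ref{th:fronteerlimit} yields $N^{-1}L_{N,k}((1+\delta)t_{\alpha,N})\to 1-\alpha$ and $N^{-1}R_{N,k}((1+\delta)t_{\alpha,N})\to 1-\alpha$ in probability. Combined with $(N-k)/N\to 1-\alpha$, each of the two constraints defining $A^{\gep}_{N,k}$ (applied to $h^{\wedge}$ at time $(1+\delta)t_{\alpha,N}$, i.e.\ to $L_{N,k}$ and $R_{N,k}$ via \eqref{def:LLRR}) then holds with probability tending to $1$, hence so does their intersection; this is \eqref{eq:apres}, and the same argument also handles $\delta=0$.

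For \eqref{eq:avant} I would use only the leftmost-particle constraint. Assume first $\delta\in(0,1)$, so that $(1-\delta)T<T$ and $\sqrt{(1-\delta)T}<\sqrt{\alpha}+\sqrt{1-\alpha}$. Inspecting the definition of $\ell_\alpha$ (it equals $0$ if $(1-\delta)T\le\alpha$, and equals $(\sqrt{(1-\delta)T}-\sqrt{\alpha})^2<(\sqrt{T}-\sqrt{\alpha})^2=1-\alpha$ otherwise; and $\ell_0((1-\delta)T)=(1-\delta)\wedge 1<1$ when $\alpha=0$), one gets $\ell_\alpha((1-\delta)T)<1-\alpha$ strictly in every case. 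I would then set $\gep_0(\delta):=\tfrac12\big(1-\alpha-\ell_\alpha((1-\delta)T)\big)>0$: for $\gep\le\gep_0(\delta)$, Proposition~\ref{th:fronteerlimit} gives $N^{-1}L_{N,k}((1-\delta)t_{\alpha,N})\to\ell_\alpha((1-\delta)T)\le 1-\alpha-2\gep_0(\delta)$ in probability while $(N-k)/N\to 1-\alpha$, so with probability tending to $1$ one has $L_{N,k}((1-\delta)t_{\alpha,N})<(N-k)-\gep N$, hence $|L_{N,k}((1-\delta)t_{\alpha,N})-(N-k)|>\gep N$ and $h^{\wedge}_{(1-\delta)t_{\alpha,N}}\notin A^{\gep}_{N,k}$. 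The case $\delta\ge1$ is immediate: then $(1-\delta)t_{\alpha,N}\le 0$, $h^{\wedge}$ is the deterministic configuration $\wedge$ with leftmost particle at site $1$, so $|\ell_{N,k}(\wedge)-(N-k)|=N-k-1>\gep N$ for large $N$ whenever $\gep<1-\alpha$, and $A^{\gep}_{N,k}$ fails.

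I do not expect a genuine obstacle: the entire substance is contained in Proposition~\ref{th:fronteerlimit}, and the rest is bookkeeping. The only points needing a little care are propagating the uniformity in $k$ carried by the notation $\limtwo{N\to\infty}{k/N\to\alpha}$ (automatic, since every statement involved is already of that form); the degenerate/boundary values $\delta=0$, $\delta\ge1$, $\alpha=0$ and $\alpha=1/2$; and the observation that for the lower bound it suffices to violate one of the two constraints defining $A^{\gep}_{N,k}$, so no microscopic control of $R_{N,k}$ (which Proposition~\ref{th:fronteerlimit} does not provide beyond the density-level information) is ever required.
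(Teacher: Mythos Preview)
Your argument is correct and is exactly what the paper intends: the corollary is stated as a ``direct consequence of Proposition~\ref{th:fronteerlimit}'' with no further proof, and your derivation from the convergence of $N^{-1}L_{N,k}$ and $N^{-1}R_{N,k}$ at the fixed times $(1\pm\delta)T$ is precisely that direct consequence. One small inaccuracy to fix in your final paragraph: Proposition~\ref{th:fronteerlimit} \emph{does} give full (not merely density-level) control of $R_{N,k}$, so your parenthetical remark is misleading---but since you only use $L_{N,k}$ for \eqref{eq:avant} this does not affect the proof.
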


Finally in order to use these results, we need to check that the final positions of $\ell_{\alpha}$, $r_{\alpha}$ correspond indeed to equilibrium.
This is a known estimate but we include a proof for completeness.

\begin{lemma}\label{lem:localise}
We have for all values of $N$ and $k$
\begin{equation*}
 \pi_{N,k}\left( \big|\ell_{N,k}-r_{N,k}\big|\ge M\right)\le  \frac{\gl^{3-M}(M+1)}{(\gl-1)^2}.
\end{equation*}
\end{lemma}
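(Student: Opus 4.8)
The relevant quantity here is $r_{N,k}-\ell_{N,k}$: one always has $\ell_{N,k}(\eta)\le N-k\le r_{N,k}(\eta)$, so the plan is to bound $\pi_{N,k}\big(r_{N,k}-\ell_{N,k}\ge M\big)$. I work on the particle side, where $\ell_{N,k}(\eta)+1$ is the position of the leftmost particle, $r_{N,k}(\eta)$ that of the rightmost empty site, and $\pi_{N,k}(\eta)=\gl^{-A(\eta)}/Z_{N,k}$ with $A$ as in \eqref{def:volume} and $Z_{N,k}:=\sum_{\zeta\in\gO^0_{N,k}}\gl^{-A(\zeta)}$. One may assume $M\ge 2$, since for $M\le 1$ the claimed bound already exceeds $1$. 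First I would observe that whenever $r_{N,k}(\eta)-\ell_{N,k}(\eta)\ge 1$ one has $\ell_{N,k}(\eta)\le N-k-1$ and $r_{N,k}(\eta)\ge N-k+1$: either of the equalities $\ell_{N,k}(\eta)=N-k$ or $r_{N,k}(\eta)=N-k$ forces all $k$ particles onto $\{N-k+1,\dots,N\}$, i.e.\ $\eta=\vee$, for which $r_{N,k}-\ell_{N,k}=0$. Writing $t:=(N-k)-\ell_{N,k}(\eta)$ and $s:=r_{N,k}(\eta)-(N-k)$, the event $\{r_{N,k}-\ell_{N,k}\ge M\}$ is thus the disjoint union, over integers $s,t\ge 1$ with $s+t\ge M$, of the events $E_{s,t}:=\{\ell_{N,k}=N-k-t,\ r_{N,k}=N-k+s\}$.

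The heart of the argument is the estimate $\pi_{N,k}(E_{s,t})\le \gl^{1-s-t}$, obtained by an injective surgery. Fix $s,t\ge 1$ and set $L:=N-k+1-t<R:=N-k+s$. Any $\eta\in E_{s,t}$ has a particle at $L$ (its leftmost one) and an empty site at $R$ (its rightmost one); let $\eta^{\ast}$ be obtained from $\eta$ by moving that particle from $L$ to $R$. Only the terms $i=L$ and $i=R$ of $\sum_{i=1}^N\eta(i)(N-i)$ change, so $A(\eta^{\ast})=A(\eta)-(R-L)=A(\eta)-(s+t-1)$, and $\eta^{\ast}\in\gO^0_{N,k}$. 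Crucially, $\eta\mapsto\eta^{\ast}$ is injective on $E_{s,t}$: since $s,t$ determine $L$ and $R$, and since $\eta$ coincides with $\eta^{\ast}$ away from $\{L,R\}$ while $\eta(L)=1$ and $\eta(R)=0$, the configuration $\eta$ is recovered from $\eta^{\ast}$. Consequently
\begin{equation*}
\sum_{\eta\in E_{s,t}}\gl^{-A(\eta)}=\gl^{-(s+t-1)}\sum_{\eta\in E_{s,t}}\gl^{-A(\eta^{\ast})}\le \gl^{-(s+t-1)}\sum_{\zeta\in\gO^0_{N,k}}\gl^{-A(\zeta)}=\gl^{-(s+t-1)}\,Z_{N,k},
\end{equation*}
whence $\pi_{N,k}(E_{s,t})\le \gl^{1-s-t}$.

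It then remains to sum. There being $m-1$ pairs $(s,t)$ with $s,t\ge 1$ and $s+t=m$,
\begin{equation*}
\pi_{N,k}\big(r_{N,k}-\ell_{N,k}\ge M\big)\le\sum_{m\ge M}(m-1)\,\gl^{1-m}=\sum_{n\ge M-1}n\,\gl^{-n}=\frac{\gl^{2-M}\big((M-1)\gl-M+2\big)}{(\gl-1)^2},
\end{equation*}
by the closed form of the geometric series differentiated once. Finally $(M-1)\gl-M+2\le (M+1)\gl$, equivalently $2-M\le 2\gl$, which holds because $\gl>1$; this yields the stated bound $\gl^{3-M}(M+1)/(\gl-1)^2$.

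The step I expect to be most delicate is the injectivity of the surgery, which genuinely requires $(s,t)$ — equivalently the two positions $L$ and $R$ — to be fixed beforehand. If instead one estimated $\pi_{N,k}(E_{s,t})$ crudely, by the number of configurations in $E_{s,t}$ times the weight of the cheapest one, a factor $\binom{s+t-2}{s-1}$ would appear, and $\sum_m 2^{m}\gl^{-m}$ diverges for $\gl\le 2$; the injection is precisely what removes this factor and makes the bound uniform over $\gl>1$. The remaining ingredients — the effect of a single particle move on $A$ in \eqref{def:volume}, and the evaluation of one geometric series — are routine.
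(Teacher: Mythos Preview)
Your proof is correct and follows essentially the same approach as the paper: both move the leftmost particle to the rightmost empty site, use injectivity of this surgery on the event where $(\ell_{N,k},r_{N,k})$ is fixed to gain a factor $\gl^{-(R-L)}$, and then sum over the possible positions. You parametrise by the pair $(s,t)=(r_{N,k}-(N-k),(N-k)-\ell_{N,k})$ rather than by $(i,j)=(\ell_{N,k}+1,r_{N,k})$, which yields the slightly sharper count of $m-1$ pairs at gap $m$ (the paper uses ``at most $A$''), but otherwise the arguments are identical; you also correctly note that the relevant event is $r_{N,k}-\ell_{N,k}\ge M$ rather than $\ell_{N,k}-r_{N,k}\ge M$ as written in the statement.
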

\begin{proof}
As in \cite[Proof of Proposition 11]{LevPer16}, we rely on the observation that if $\ell_{N,k}(\xi)=i-1, r_{N,k}(\xi)=j$ (not both equal to $N-k$), then there is a particle at site $i$ while site $j$ is empty, and therefore, the bijection $T_{i,j}$ that interchanges the contents of sites $i$ and $j$ in the particle system, decreases $A(\xi)$ (recall \eqref{eq:equilibro}) by an amount $j-i$. Thus we have  
\begin{equation*}
\pi_{N,k}\left( \ell_{N,k}=i-1, r_{N,k}=j\right)\le \gl^{-(j-i)} \sum_{\xi \in \gO_{N,k}}\pi_{N,k}(T_{i,j}(\xi))\le  \gl^{-(j-i)},
\end{equation*}
Summing over all possibilities for $i$ and $j$ (at most $A$ possibilities when the gap is equal to $A$) this yields 
\begin{equation*}
\pi_{N,k}\left( \big|\ell_{N,k}-r_{N,k}\big|\ge M\right)\le \sum_{A\ge M} A \gl^{-(A-1)}= \frac{M\gl^{-(M-1)}}{(1-\gl^{-1})}+\frac{\gl^{-M}}{(1-\gl^{-1})^2}.
 \end{equation*}
\end{proof}
 
\subsection{The mixing time for ASEP: Proof of Theorem \ref{th:asep}}\label{Subsec:MixingAsep}
 
As the case $p=1$ was treated in Section \ref{sec:tasep}, we assume here that $p\in(1/2,1)$.
 We have to prove that in the limit when $k/N$ tends to $\alpha$, the mixing time is equivalent to $t_{\alpha,N}$.
 
 \smallskip

The easiest part is to show that the mixing time is \textit{at least}  $t_{\alpha,N}$, or more precisely, that for any $\delta>0$
 \begin{equation*}
  \limtwo{N\to \infty}{k/N\to \alpha}d_{N,k}((1-\delta)t_{\alpha,N})=1.
 \end{equation*}
From the definitions of total variation distance and $d_{N,k}$, we have for any $\gep>0$
\begin{equation*}
d_{N,k}((1-\delta)t_{\alpha,N})\ge \pi_{N,k}(A^{\gep}_{N,k})-P^{\wedge}_{(1-\delta)t_{\alpha,N}}(A^{\gep}_{N,k}).
\end{equation*}
The first probability converges to $1$ according to Lemma \ref{lem:localise}, while the second converges to zero
if $\gep$ is sufficiently small according to Corollary \ref{cor:hit}.

\smallskip

To obtain the other bound on the mixing time, we show that for any $\delta>0$,
 \begin{equation}\label{eq:bornesup}
  \limtwo{N\to \infty}{k/N\to \alpha}d_{N,k}((1+\delta)t_{\alpha,N})=0.
 \end{equation}
Recall \eqref{eq:couplaj} and the monotone grand coupling. We have 
 \begin{equation*}
  d^{N,k}((1+\delta)t_{\alpha,N})\le \bbP\left[ h^{\wedge}_{(1+\delta)t_{\alpha,N}}\ne h^{\vee}_{(1+\delta)t_{\alpha,N}} \right].
 \end{equation*}

The right-hand side is smaller than 
\begin{equation*}
     \bbP\left[ h^{\wedge}_{(1+\delta)t_{\alpha,N}}\ne h^{\vee}_{(1+\delta)t_{\alpha,N}} \ | \ h^{\wedge}_{t_{\alpha,N}}\in A^{\gep}_{N,k} \right]
     + \bbP\left[ h^{\wedge}_{t_{\alpha,N}}\notin A^{\gep}_{N,k} \right].
\end{equation*}
According to \eqref{eq:apres}, the second term vanishes in the limit. Regarding the first term, using the Markov property and repeating the computation of Section \ref{sec:squizz}, we obtain
\begin{equation}\label{eq:condisquizz}\begin{split}
&\bbP\left[  h^{\wedge}_{(1+\delta)t_{\alpha,N}}\ne h^{\vee}_{(1+\delta)t_{\alpha,N}} \ | \ h^{\wedge}_{t_{\alpha,N}},\ h^{\vee}_{t_{\alpha,N}} \right] \\
&\le \bbE\left[ \frac{f_{N,k}\left(h^{\wedge}_{(1+\delta)t_{\alpha,N}}\right)-f_{N,k}\left( h^{\vee}_{(1+\delta)t_{\alpha,N}}\right)}{\delta_{\min}(f_{N,k})} \ | 
\  h^{\wedge}_{t_{\alpha,N}},\ h^{\vee}_{t_{\alpha,N}} \right]\\
&\le \frac{f_{N,k}\left(h^{\wedge}_{t_{\alpha,N}}\right)-f_{N,k}\left( h^{\vee}_{t_{\alpha,N}}\right)}{\delta_{\min}(f_{N,k})}e^{-\delta t_{\alpha,N}(\varrho+\gamma_N)}.\end{split}
\end{equation}
To conclude we remark that if one defines  
$\wedge^{\gep}_{N,k}$ to be the maximal element of $A^{\gep}_{N,k}$ (which is well defined since the maximum of two elements 
of $A^{\gep}_{N,k}$ is in $A^{\gep}_{N,k}$), then for some constant $C=C_\gl>0$, we have on the event $\{h^{\wedge}_{t_{\alpha,N}}\in A^{\gep}_{N,k}\}$
\begin{equation*}
 f_{N,k}\left(h^{\wedge}_{t_{\alpha,N}}\right)-f_{N,k}\left( h^{\vee}_{t_{\alpha,N}}\right) \le  f_{N,k}\left( \wedge^{\gep}_{N,k}\right)-f_{N,k}(\vee)
 \le  CN \delta_{\min}(f_{N,k}) \gl^{\gep N}\;,
\end{equation*}
where we have used \eqref{Eq:BoundDelta}. Hence we deduce from \eqref{eq:condisquizz} that 
\begin{equation*}
   \bbP\left[ h^{\wedge}_{(1+\delta)t_{\alpha,N}}\ne h^{\vee}_{(1+\delta)t_{\alpha,N}} \ | \ h^{\wedge}_{t_{\alpha,N}}\in A^{\gep}_{N,k} \right]
   \le CN \gl^{\gep N}e^{-\delta t_{\alpha,N}(\varrho+\gamma_N)}.
\end{equation*}
If $\gep$ is chosen small compared to $\delta$, this last term tends to zero exponentially fast and we can conclude that \eqref{eq:bornesup} holds.

\subsection{From ASEP to card-shuffle: Proof of Theorem \ref{th:shuffle}}

Let $\sigma^{\max}_t$ and $\sigma^{\min}_t$ denote the dynamics starting from the maximal ($\sigma^{\max}$ from \eqref{defsigmamax})
and minimal (the identity) initial conditions.
In order to adapt the method used above for the ASEP we need to control the height functions for all levels $k$.
We extend the definition \eqref{def:lknrkn} to the group of permutations by setting for $\xi\in \cS_N$ 
\begin{equation*}
 \ell_{N,k}(\xi):=\ell_{N,k}(h_k(\xi))  \quad \text{ and }  \quad r_{N,k}(\xi):=r_{N,k}(h_k(\xi)). 
\end{equation*}
We also set  
\begin{align*}
 B^{\gep}_{N}&:=\Big\{\xi \in \cS_N : \forall k\in \llb 1, N-1 \rrb,\\
 &\qquad\qquad\qquad |\ell_{N,k}(\xi)-N+k|\le \gep N, |r_{N,k}(\xi)-N+k|\le \gep N\Big\}\;.\\
 &=\Big\{\xi \in \cS_N : \forall k\in \llb 1, N-1 \rrb,\; h_k(\xi) \in A^\gep_{N,k}\Big\}\;.
\end{align*}
We need the following improvement of Corollary \ref{cor:hit}.
We set 
$$t_N:=t_{1/2,N}=2(p-q)^{-1}N$$
\begin{lemma}\label{lem:intheb}
 We have for any $\gep>0$
 \begin{equation*}
  \lim_{N\to \infty} \bbP\left[\sigma^{\max}_{t_N} \in  B^{\gep}_{N} \right]=1.
 \end{equation*}

\end{lemma}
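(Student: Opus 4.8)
The plan is to deduce the uniform (over all levels $k$) statement from the level-by-level Corollary \ref{cor:hit}, exploiting the fact that at time $t_N = t_{1/2,N}$ the \emph{hardest} level is $k \approx N/2$, since $t_{\alpha,N} = (p-q)^{-1}N(\sqrt\alpha+\sqrt{1-\alpha})^2$ is maximized at $\alpha=1/2$ where it equals $t_N$. First I would note that $\sigma^{\max}_{t_N} \in B^\gep_N$ is equivalent to $h_k(\sigma^{\max}_{t_N}) = h^\wedge_{t_N} \in A^\gep_{N,k}$ simultaneously for all $k\in\llb 1,N-1\rrb$ (here I use that the projection $\eqref{defachk}$ of $\sigma^{\max}$ is the maximal element $\wedge$ of $\gO_{N,k}$, which follows from the definitions of $\sigma_{\max}$ and of the order). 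So the claim is
\begin{equation*}
\lim_{N\to\infty}\bbP\Big[\forall k\in\llb 1,N-1\rrb,\ h^{\wedge}_{t_N}\in A^\gep_{N,k}\Big]=1,
\end{equation*}
and equivalently that the complementary union of the bad events has vanishing probability. Since there are only $N-1$ levels, a crude union bound would suffice \emph{if} each individual $\bbP[h^\wedge_{t_N}\notin A^\gep_{N,k}]$ were $o(1/N)$; but Corollary \ref{cor:hit} only gives $o(1)$ for each fixed $\alpha$, and worse, the convergence is not obviously uniform in $k$. So a naive union bound is not enough and the argument has to be organized around monotonicity.

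The key structural input is monotonicity in $k$: I would first establish that the events $\{\ell_{N,k}(h^\wedge_t)\le N-k+\gep N\}$ and $\{r_{N,k}(h^\wedge_t)\le N-k+\gep N\}$, and their lower counterparts, behave monotonically as $k$ varies, so that if the bounds hold at finitely many well-chosen levels $k_1<\dots<k_m$ they hold (with a slightly worse $\gep$) at all intermediate levels. Concretely: $h^\wedge$ at level $k$ and at level $k+1$ can be coupled so that the corresponding height functions are ordered (there is a standard coupling in which increasing $k$ by one only adds a particle, hence raises the height function by $2$ at one coordinate then lets it evolve monotonically), which forces $\ell_{N,k}$ and $r_{N,k}$ to vary monotonically in $k$ under the coupling. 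Since $N-k$ also varies monotonically in $k$, the quantities $\ell_{N,k}-(N-k)$ and $r_{N,k}-(N-k)$ are controlled on a whole interval of $k$'s as soon as they are controlled at the two endpoints, up to an error $\le$ (length of the interval). Therefore, fixing an integer $m=m(\gep)$ large enough and taking $k_i \approx iN/m$, it is enough to prove $\bbP[h^\wedge_{t_N}\in A^{\gep/2}_{N,k_i}]\to 1$ for each of the $m$ fixed ratios $\alpha_i = i/m$.

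For each such fixed $\alpha_i=i/m$, Corollary \ref{cor:hit} applies with $\alpha=\min(\alpha_i,1-\alpha_i)\le 1/2$ (invoking Remark \ref{rem:sym} to reduce to $\alpha\le 1/2$), giving $\bbP[h^\wedge_{(1+\delta)t_{\alpha_i,N}}\in A^{\gep'}_{N,k_i}]\to 1$ for suitable $\gep'$. The point is now that $t_{\alpha_i,N}\le t_N$ with $t_N$ being strictly larger unless $\alpha_i=1/2$; writing $t_N = (1+\delta_i)t_{\alpha_i,N}$ for some $\delta_i\ge 0$, the statement $\eqref{eq:apres}$ of Corollary \ref{cor:hit} used with this $\delta_i$ delivers exactly $\bbP[h^\wedge_{t_N}\in A^{\gep'}_{N,k_i}]\to 1$. (For the single value $i$ with $\alpha_i=1/2$ one has $\delta_i=0$ and one uses $\eqref{eq:apres}$ with $\delta=0$, which is allowed since $\eqref{eq:apres}$ is stated for $\delta\ge 0$; alternatively run the dynamics a negligible extra amount of time.) Intersecting the $m$ events of probability $\to 1$ and then propagating from the grid $\{k_i\}$ to all $k$ via the monotonicity just described, with $\gep/2$ absorbing the interpolation error once $N/m < \gep N /2$, i.e. $m>2/\gep$, completes the proof.

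\medskip\noindent\textbf{Main obstacle.} The delicate point is the monotonicity-in-$k$ interpolation: one must verify that there genuinely is a coupling of $h^\wedge$ across consecutive levels $k$ under which $\ell_{N,k}$ and $r_{N,k}$ are monotone, and quantify the slack lost when passing from the grid to all levels so that it is absorbed by shrinking $\gep$. A secondary, easier obstacle is the bookkeeping with $\delta$ in Corollary \ref{cor:hit}: one needs $\eqref{eq:apres}$ to be usable at $\delta=0$ for the critical level $\alpha=1/2$, which is fine as stated, but it is worth spelling out. Everything else is a finite union bound over the $m(\gep)$ grid points.
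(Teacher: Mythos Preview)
Your proposal is correct and follows essentially the same route as the paper: discretize the set of levels to a grid $k_i\approx iN/(m+1)$, use monotonicity in $k$ of $\ell_{N,k}$ and $r_{N,k}$ to interpolate from the grid to all levels (absorbing the error by halving $\gep$ once $m$ is large enough), and then apply Corollary~\ref{cor:hit} at each of the finitely many grid densities, using that $t_N=\max_\alpha t_{\alpha,N}$ so that $t_N=(1+\delta_i)t_{\alpha_i,N}$ for some $\delta_i\ge 0$.

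The one place where you are overcomplicating is your ``main obstacle''. No coupling of the dynamics across levels is needed: the processes $h_k(\sigma^{\max}_t)$ for different $k$ are all projections of the \emph{same} random permutation $\sigma^{\max}_t$, and for any fixed $\sigma\in\cS_N$, passing from level $k$ to level $k+1$ simply adds one particle (the one with label $N-k$). Hence $k\mapsto\ell_{N,k}(\sigma)$ and $k\mapsto r_{N,k}(\sigma)$ are \emph{deterministically} non-increasing --- this is the one-line observation the paper uses, and it dissolves the obstacle you flagged.
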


\begin{proof}
 
Let $m>0$ be a fixed integer and fix $k^N_1<\dots<k^N_m$ for $N\ge 0$, such that
\begin{equation*}
\forall i \in \llb 1,m\rrb, \lim_{N\to \infty} k_i^N/N= \frac{i}{m+1},
\end{equation*}
Set 
\begin{align*}
B^{\gep}_{N,m}&:=\big\{\xi \in \cS_N :\forall i\in \llb 1, m \rrb,\\
&|\ell_{N,k_i}(\xi)-N+k_i|\le (\gep/2) N \text{ and } |r_{N,k_i}(\xi)-N+k_i|\le (\gep/2)N)\big\}.
\end{align*}
For a given $\xi\in \cS_N$, $\ell_{N,k}(\xi)$ and $r_{N,k}(\xi)$ are non-increasing functions of $k$ (when $k$ increases, only new particles are added).
Thus, we have for $m\ge 3\gep^{-1}$ and $N$ large enough $B^{\gep}_{N,m}\subset B^{\gep}_N$. Consequently 
\begin{align*}
  \bbP\left[\sigma^{\max}_{t_N} \notin  B^{\gep}_{N} \right]\le\bbP\left[\sigma^{\max}_{t_N} \notin  B^{\gep}_{N,m} \right] &\le 
 \sum_{i=1}^m \bbP\left[ h_k(\sigma^{\max}_{t_N})\notin A^{\gep/2}_{N,k_i} \right]\\
 &= \sum_{i=1}^m \bbP\left[ h^{\wedge}_{t_N}\notin A^{\gep/2}_{N,k_i} \right]. 
 \end{align*}
 Using Corollary \ref{cor:hit}, and the fact that $t_N=\max_{\alpha\in[0,1]} t_{N,\alpha}$, we can conclude. 
\end{proof}

\begin{proof}[Proof of Theorem \ref{th:shuffle}]
Note that we only need to prove an upper-bound since the lower bound is given by the case $\alpha=1/2$ of Theorem \ref{th:asep}. 
We consider separately the case $p=1$ in the next subsection. Recall \eqref{eq:couplos}. We have for all $\delta > 0$
\begin{equation*}
  d_N((1+\delta)t_N)\le \bbP\left[  \sigma^{\max}_{(1+\delta)t_N} \ne \sigma^{\min}_{(1+\delta)t_N} \ | \ \sigma^{\max}_{t_N}\in \  B^{\gep}_{N} \right]
  +\bbP\left[  \sigma^{\max}_{t_N}\notin \  B^{\gep}_{N}\right].
\end{equation*}
From Lemma \ref{lem:intheb}, the second term goes to zero.
To estimate the first one, we use a conditional version of \eqref{eq:quantit} and \eqref{eq:zioup} and we obtain 
\begin{equation}\label{eq:squizz}\begin{split}
  &\bbP\left[  \sigma^{\max}_{(1+\delta)t_N} \ne \sigma^{\min}_{(1+\delta)t_N} \ | \ \sigma^{\max}_{t_N}, \ \sigma^{\min}_{t_N} \right]\\
  &\le \sum_{k=1}^N \frac{f_{N,k}(h_k(\sigma^{\max}_{t_N}))-f_{N,k}(h_k(\sigma^{\min}_{t_N}))}{\delta_{\min}(f_{N,k})}e^{-\delta t_N(\varrho+\gamma_N)}.
\end{split}\end{equation}
Now if $\sigma^{\max}_{t_N}$ is in $B^{\gep}_N$ then all its projections are in the respective $A_{N,k}^{\gep}$ and thus (recall that 
$\wedge^{\gep}_{N,k}$ is the maximal element of $A^{\gep}_{N,k}$)

\begin{equation*}
 f_{N,k}(h_k(\sigma^{\max}_{t_N}))-f_{N,k}(h_k(\sigma^{\min}_{t_N}))\le  f_{N,k}(\wedge^{\gep}_{N,k})-f_{N,k}(\vee)
 \le CN \delta_{\min}(f_{N,k})\gl^{\gep N}.
\end{equation*}
Thus taking the conditional expectation and applying Lemma \ref{lem:intheb} we deduce that there exists $C'>0$ such that for all $N$ large enough
\begin{equation*}
  \bbP\left[  \sigma^{\max}_{(1+\delta)t_N} \ne \sigma^{\min}_{(1+\delta)t_N} \ | \ \sigma^{\max}_{t_N}\in B^{\gep}_N \right]
  \le C'N^2 \gl^{\gep N}e^{-\delta t_N(\varrho+\gamma_N)}.
\end{equation*}
The right hand side tends to zero exponentially fast provided $\gep$ is chosen sufficiently small compared to $\delta$. Therefore, we have shown that $d_N((1+\delta)t_N)$ goes to $0$ as $N\rightarrow\infty$, thus concluding the proof of Theorem \ref{th:shuffle} in the case $p<1$.
\end{proof}

\subsection{The case $p=1$ for Theorem \ref{th:shuffle}}\label{Subsec:p1}

Unlike for the TASEP, the mixing for the totally biased card shuffling cannot be obtained directly from \cite[Theorem 1]{Rost81}.
The reason being that for doing so one would need to know not only the limiting behavior of the hitting time of $\vee_{N,k}$, but also some estimates on the rate of 
convergence.
While this could be achieved by using large deviation results obtained for the TASEP (see e.g.\ \cite{DerLeb98}), 
we prefer in this section to show how the case $p=1$ can be deduced from $p<1$ via approximations.
We proceed in two steps.

\smallskip

First, given $\gep$ sufficiently small (independent of $N$), we let $(\sigma_{t})_{t\ge 0}$ and $(\tilde \sigma_t)_{t\ge 0}$ be biased card shuffles 
with respective asymmetries given by $p=1$, and 
$\tilde p=1-\epsilon/2$, both with initial condition $\sigma^{\max}$. 
We denote by $\tilde \pi_{N}$
the equilibrium measure associated to $\tilde \sigma$. We couple these two processes (using a construction similar to the one displayed in Appendix \ref{Appendix:Coupling}) in such a way that for all $t\ge 0$,
\begin{equation}\label{dominacion}
\tilde \sigma_t\ge \sigma_t.
\end{equation}
Notice that for $N$ sufficiently large, Theorem \ref{th:shuffle} for $\tilde{p}$ yields that $t_1:=2N(1+2\gep)$ 
is larger than the $\gep$-mixing time of $\tilde \sigma_t$. We introduce
\begin{multline*}
\tilde{B}_{N}:= \{ \xi \in \cS_{N} \ : \forall k\in \lint 1, N\rint,\\
\  \ell_{N,k}(\xi)\ge N-k-\sqrt{N}   \text{ and } r_{N,k}(\xi)\le N-k+\sqrt{N} \}\;.
\end{multline*}
As $\tilde{B}_{N}$ is a decreasing event, for $N$ large enough we have from \eqref{dominacion} and the mixing estimate for $\tilde{p}$ 
\begin{equation}\label{secondterm}
 \bbP\left[ \sigma_{t_1} \notin \tilde{B}_{N} \right]\le  \bbP\left[ \tilde\sigma_{t_1} \notin \tilde{B}_{N} \right]
 \le \tilde{\pi}_{N}(\cS_N\backslash\tilde{B}_{N})+\gep \le 2\gep.
\end{equation}
where the last estimate is obtained using Lemma \ref{lem:localise} and a union bound over $k$.

\smallskip

We use then a second coupling for $t\ge t_1$: Let $(\hat \sigma_{t})_{t\ge t_1}$ be a biased shuffling with asymmetry $\hat p=1-N^{-2}$ 
 and initial condition $\sigma_{t_1}$, and coupled with $(\sigma_t)_{t\ge t_1}$ in a way such that at all time $\hat \sigma_t\ge \sigma^{\max}_t$.
Also, we let $\hat \sigma^{\min}_{t}$ be a shuffling with bias $\hat p$ starting from the identity at time $t_1$,
and couple it in a way such that $\hat \sigma_t\ge \hat \sigma^{\min}_t$. We set $t_2=t_1+\gep N$, and we denote by $\hat{\pi}_N$ the equilibrium measure associated to $\hat{\sigma}$. We have 
\begin{equation*}
d_N(t_2)=\bbP\left[ \sigma_{t_2}\ne \Id \right]\le \bbP\left[ \hat \sigma_{t_2}\ne \Id \right]
\le  \bbP\left[ \hat \sigma_{t_2}\ne \Id \ | \ \sigma_{t_1}\in \tilde{B}_{N} \right]+  \bbP\left[ \sigma_{t_1} \notin \tilde{B}_{N} \right].
\end{equation*}
The second term is smaller than $2\gep$ from \eqref{secondterm}.
The first term can be decomposed as follows
\begin{equation*}
 \bbP\left[ \hat \sigma_{t_2}\ne \Id \ | \ \sigma_{t_1}\in \tilde B_N\right]
 \le  \bbP\left[ \hat \sigma_{t_2}\ne \hat \sigma^{\min}_{t_2} \ | \ \sigma_{t_1}\in \tilde B_N \right]
 +\bbP\left[ \hat \sigma^{\min}_{t_2}\ne \Id \right].
\end{equation*}
Using a stochastic coupling with equilibrium we see that there exists $C>0$ such that
$$ \bbP\left[ \hat \sigma^{\min}_{t_2}\ne \Id \right] \le \hat \pi_N(\cS_N\setminus \{\Id \})\le CN^{-1}.$$
where the last estimate can be deduced from Lemma \ref{lem:localise}.
The other contribution is bounded using the squeezing argument in \eqref{eq:squizz}: 
if $\hat f_{N,k}$ are the eigenfunctions corresponding to $\hat p$ we have 
\begin{equation*}
 \bbP\left[ \hat \sigma_{t_2}\ne \hat \sigma^{\min}_{t_2} \ | \ \sigma_{t_1} \right]\le
  \left( \sum_{k=1}^N \frac{\hat f_{N,k}(h_k(\sigma_{t_1}))-\hat f_{N,k}(h_k(\Id))}{\delta_{\min}(\hat f_{N,k})}\right)e^{-\gep N (\hat \varrho+\gamma_N)}.
\end{equation*}
If $\sigma_{t_1}\in \tilde{B}_{N}$, then the first factor in the r.h.s.\ is bounded above by $N^{C\sqrt{N}}$ for some constant $C$, while the second term
is smaller than $e^{-\gep N/2}$. This allows to conclude the proof.
\qed

\subsection{Proof of Theorem \ref{th:asep2}}

In this subsection, we present the modifications needed to obtain the mixing time starting from some general sequence of initial 
conditions $(\xi_N,N\geq 1)$ satisfying \eqref{scalingz}. We focus here on the case $\alpha>0$ since $\alpha=0$ is much simpler and can be 
immediately adapted from the material in Section \ref{sec:zeralfa}.
We denote by $(h_t^{\xi_N},t\geq 0)$ the associated evolving height function, and we update the notation \eqref{def:LLRR} to fit the initial condition
$$L_{N,k}(t) := \ell_{N,k}(h_t^{\xi_N}) \text{ and } R_{N,k}(t) := r_{N,k}(h_t^{\xi_N})$$ 
Let $(\rho(t,x),t\geq 0, x\in (0,1))$ be the hydrodynamic limit obtained in Theorem \ref{Th:HydroGene}. 
Recall the definition of $t_{\rho_0}$ \eqref{def:trho}, and set
\begin{equation*}
 \ell_{\rho}(t):=\inf\{x\in[0,1]: \rho(t,x)>0\}, \quad  r_{\rho}(t):=\sup\{x\in[0,1]: \rho(t,x)<1\}.
\end{equation*}
Note that for  $t\ge t_{\rho_0}$ we have  $\ell_{\rho}(t)=  r_{\rho}(t)= 1-\alpha$ 
($\inf$ and $\sup$ have to be interpreted as essential extrema here, since $\rho(t,\cdot)$ is defined in $L^{\infty}$).
We have to prove the following generalization of Proposition \ref{th:fronteerlimit}.

\begin{proposition}\label{th:fronteerlimit2}
Let $p\in(1/2,1]$. For any $t\geq 0$, we have the following convergence in probability
 \begin{equation*}\begin{split}
  \lim_{N\to \infty} &N^{-1}L_{N,k}\left( \frac{Nt}{p-q}  \right)=\ell_{\rho}(t)\wedge \big(\ell + t\big),\\
  \lim_{N\to \infty} &N^{-1} R_{N,k}\left( \frac{Nt}{p-q}  \right)=r_{\rho}(t)\vee\big(r - t\big)\;.
\end{split}
  \end{equation*}
\end{proposition}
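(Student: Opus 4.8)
The plan is to follow the scheme of the proof of Proposition~\ref{th:fronteerlimit} carried out in Section~\ref{sec:lmprmes}, and to treat only $L_{N,k}$; the statement for $R_{N,k}$ then follows from the particle--hole and space reversal symmetry recalled in Remark~\ref{rem:sym}, which exchanges the leftmost particle with the rightmost empty site and sends $(\ell,r,\rho_0)$ to $(1-r,1-\ell,1-\rho_0(1-\cdot))$ while fixing $p$. Set $m(t):=\ell_{\rho}(t)\wedge(\ell+t)$. Since the sites $1,\dots,\ell_N(\xi_N)-1$ are empty, assumption \eqref{scalingz} forces $\ell\le\ell_\rho(0)$; moreover the macroscopic left edge $\ell_\rho(\cdot)$ moves at speed at most $1$ (its velocity is a shock speed $1-\rho_-\le 1$ or a rarefaction-edge speed $1-2\cdot 0=1$), so $s\mapsto \ell+s-\ell_\rho(s)$ is non-decreasing, starts $\le 0$, and crosses $0$ at most once, say at a finite time $t_\star\ge 0$; thus $m(s)=\ell+s$ for $s\le t_\star$ and $m(s)=\ell_\rho(s)$ for $s\ge t_\star$. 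It then suffices to prove that, for each $t\ge 0$ and $\gep>0$, the quantity $N^{-1}L_{N,k}(Nt/(p-q))$ lies within $\gep$ of $m(t)$ with probability tending to one.

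For the upper bound, two competing mechanisms each give a bound. On one hand, by Theorem~\ref{Th:HydroGene} the empirical density at time $Nt/(p-q)$ converges to $\rho(t,\cdot)$, and since $\ell_\rho(t)$ is the essential infimum of its support we have $\int_{\ell_\rho(t)}^{\ell_\rho(t)+\gep}\rho(t,y)\dd y>0$; hence with probability tending to one there is a particle in $[\,N\ell_\rho(t),N(\ell_\rho(t)+\gep)\,]$, so $L_{N,k}(Nt/(p-q))\le N(\ell_\rho(t)+\gep)$. On the other hand, only the leftmost particle's own jumps change $L_{N,k}$, and a rightward jump requires an empty site ahead; using the graphical construction one couples this trajectory with a continuous-time random walk of jump rates $p$ right and $q$ left, reflected at $1$ and started at $\ell_N(\xi_N)$, so that the leftmost particle stays weakly to the left of the walk; by the law of large numbers this walk is at position $\le N(\ell+t+\gep)$ at time $Nt/(p-q)$ w.h.p. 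Taking the minimum yields $N^{-1}L_{N,k}(Nt/(p-q))\le m(t)+\gep$.

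For the lower bound I would distinguish the two regimes. In the \emph{straggler regime} $t\le t_\star$ one shows the leftmost particle travels at asymptotic speed $p-q$: by Theorem~\ref{Th:HydroGene} the band $[\,N(\ell+s),N\ell_\rho(s)\,]$ carries $o(N)$ particles, uniformly for $s\le t$; since the leftmost particle stays inside this band (by the upper bound) and, like the few particles there, drifts right at speed $\approx p-q$, it never catches up with the macroscopic bulk and is overtaken by, and overtakes, only $o(N)$ distinct particles, so its deviation from the reflected $(p,q)$-walk used above is $o(N)$ over the relevant time interval; as in Section~\ref{sec:lmprmes}, making this quantitative is done by comparing the dynamics in this band with a stationary ASEP with a bounded number of particles on $\bbZ$, whose leftmost particle moves exactly at speed $p-q$. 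This gives $N^{-1}L_{N,k}(Nt/(p-q))\ge \ell+t-\gep$ w.h.p. In the \emph{bulk regime} $t\ge t_\star$, I would restart the dynamics at time $t_\star N/(p-q)$: by the previous case the leftmost particle is then at $N(\ell_\rho(t_\star)+o(1))$, i.e.\ exactly at the macroscopic left edge, and the configuration has empirical density close to $\rho(t_\star,\cdot)$, a profile carrying no straggler (its height function equals $-x$ up to $N\ell_\rho(t_\star)+o(N)$). From such a configuration one is in the situation treated for the maximal initial condition $\wedge$, and the argument of Section~\ref{sec:lmprmes} --- a coupling with a stationary ASEP on $\bbZ$ of the appropriate local density, namely $\rho(s,\ell_\rho(s)^{+})$, whose leftmost particle moves at the shock speed $1-\rho(s,\ell_\rho(s)^{+})$ --- shows that the leftmost particle does not detach and tracks $\ell_\rho(\cdot)$, giving $N^{-1}L_{N,k}(Nt/(p-q))\ge \ell_\rho(t)-\gep$ w.h.p. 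Patching the two regimes, using the monotonicity of $s\mapsto \ell+s-\ell_\rho(s)$, concludes.

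The main obstacle is the lower bound in the bulk regime. The hydrodynamic limit controls the height function only up to an additive $o(N)$, so it cannot by itself exclude the formation of a microscopic straggler detaching from the bulk and running ahead of the macroscopic left edge (such a straggler would contradict the upper bound, but this is not visible at the level of Theorem~\ref{Th:HydroGene}). Ruling this out, together with handling the crossover at $t_\star$, is exactly what requires the comparison with a stationary ASEP on the line that is the technical heart of Section~\ref{sec:lmprmes}; in the present setting it is reused with the reference density $\alpha$ replaced by the local density $\rho(s,\ell_\rho(s)^{+})$ of the hydrodynamic profile at its left edge.
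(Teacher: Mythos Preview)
Your upper bound is essentially the paper's argument and is fine.

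Your lower bound, however, diverges from the paper and contains a genuine misconception about the key tool. You propose to split into a ``straggler'' regime $t\le t_\star$ and a ``bulk'' regime $t\ge t_\star$, and in the latter you describe the comparison from Section~\ref{sec:lmprmes} as ``a coupling with a stationary ASEP on $\bbZ$ of the appropriate local density $\rho(s,\ell_\rho(s)^+)$, whose leftmost particle moves at the shock speed $1-\rho(s,\ell_\rho(s)^+)$''. This is not what Section~\ref{sec:lmprmes} does. The auxiliary process $\heta$ of Proposition~\ref{th:partiasymp} is an ASEP with a \emph{fixed number} $n=\delta N$ of particles on $\bbZ$, started from the packed configuration $\heta_i(0)=i$ (or a translate thereof); the stationary measure that appears in Lemma~\ref{damean} is a product of geometrics on the \emph{spacings}, used only to get a lower bound on $\bbE[\heta_1(t)]$, and the conclusion is that each $\heta_i$ moves at speed $p-q$ up to $O(\sqrt{\delta}N)$. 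No local density $\rho(s,\ell_\rho(s)^+)$ enters, and no shock speed is invoked.

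Consequently your bulk-regime argument, as written, does not go through: you would need a genuinely different estimate (control of the leftmost particle of a density-$\rho$ stationary ASEP on $\bbZ$), which the paper never proves and which is not needed. Your straggler-regime argument is also only heuristic (``overtaken by, and overtakes, only $o(N)$ distinct particles'').

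The paper's proof avoids the regime split entirely. For fixed $t$, one takes $n=\delta N$ particles on $\bbZ$ with initial positions
\[
\heta_i(0)=N\bigl([(\ell+t)\wedge\ell_\rho(t)]-t-\gep\bigr)+i\;,
\]
couples them below the first $n$ particles of $\eta$ (the ordering at time $0$ holds because $\ell_\rho$ has speed at most $1$, so the shifted block sits to the left of $\eta_1(0),\dots,\eta_n(0)$), uses Theorem~\ref{Th:HydroGene} to guarantee $\eta_{n+1}(Ns/(p-q))\ge N\ell_\rho(s)$ and hence that the coupling persists on $[0,Nt/(p-q)]$, and then reads off from Proposition~\ref{th:partiasymp} that $\heta_1(Nt/(p-q))\ge N\bigl((\ell+t)\wedge\ell_\rho(t)-\gep-C\sqrt{\delta}\bigr)$. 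One comparison, no restart at $t_\star$, no appeal to local densities or shock speeds.
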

\noindent The asserted mixing time of Theorem \ref{th:asep2} is nothing but the first time $t\ge 0$ at which the limits obtained in this proposition reach the value $1-\alpha$.
This being given, the proof presented in Subsection \ref{Subsec:MixingAsep} works almost verbatim upon replacing $t_{\alpha,N}$ by
$$ t_N = \frac{N}{p-q} \max\big(t_\rho, 1-\alpha - \ell, r-1+\alpha\big)\;.$$
Indeed, the lower bound follows as a corollary of Proposition \ref{th:fronteerlimit2} and Lemma \ref{lem:localise}. Regarding the upper bound in the case $p<1$, we first notice that
$$ P_t^{\xi_N} - \pi_{N,k} = \sum_{\xi'\in\Omega_{N,k}}\pi_{N,k}(\xi') \big(P_t^{\xi_N} - P_t^{\xi'}\big)\;,$$
so that
$$ \| P_t^{\xi_N} - \pi_{N,k} \|_{TV}  \leq \pi_{N,k}\big(\Omega_{N,k}\backslash A^\gep_{N,k}\big) + \sup_{\xi'\in A^\gep_{N,k}} \| P_t^{\xi_N} - P_t^{\xi'} \|_{TV}\;.$$
The first term goes to $0$ by Lemma \ref{lem:localise}. Using the monotonicity under the grand coupling $\bbP$ at the second line, we bound the second term as follows
\begin{align*}
\sup_{\xi'\in A^\gep_{N,k}} \| P_t^{\xi_N} - P_t^{\xi'} \|_{TV} &\leq \sup_{\xi'\in A^\gep_{N,k}} \bbP\big[h_t^{\xi_N} \ne h^{\xi'}_t\big] \\
&\leq \bbP\big[h_t^{\xi_N} \ne h^{\vee}_t\big] + \bbP\big[h_t^{\xi_N} \ne h^{\wedge^\gep_{N,k}}_t\big]\;.
\end{align*}
Thus it suffices to show that both terms on the r.h.s. vanish when $t=t_N + \delta N$, for any $\delta > 0$ (notice that $t_N$ may be negligible compared to $N$). We write
\begin{align*}
\bbP\big[h_{t_N+\delta N}^{\xi_N} \ne h^{\vee}_{t_N+\delta N}\big] &\leq \bbP\big[h_{t_N+\delta N}^{\xi_N} \ne h^{\vee}_{t_N+\delta N}\ |\ h_{t_N}^{\xi_N} \in A^\gep_{N,k}\big]\\
&+ \bbP\big[h_{t_N}^{\xi_N} \notin A^\gep_{N,k}\big]\;,
\end{align*}
as well as
\begin{align*}
\bbP\big[h_{t_N+\delta N}^{\xi_N} \ne h^{\wedge^\gep_{N,k}}_{t_N+\delta N}\big] &\leq \bbP\big[h_{t_N+\delta N}^{\xi_N} \ne h^{\wedge^\gep_{N,k}}_{t_N+\delta N}\ |\ h_{t_N}^{\xi_N}, h^{\wedge^\gep_{N,k}}_{t_N} \in A^\gep_{N,k}\big]\\
&+ \bbP\big[h_{t_N}^{\xi_N} \notin A^\gep_{N,k}\big] + \bbP\big[h^{\wedge^\gep_{N,k}}_{t_N} \notin A^\gep_{N,k}\big]\;.
\end{align*}
From there, we can apply the same reasoning as in Subsection \ref{Subsec:MixingAsep} to show that these two terms go to $0$ as $N$ goes to $\infty$. This concludes the proof of Theorem \ref{th:asep2} when $p<1$. To treat the case $p=1$, one simply has to adapt the arguments presented in Subsection \ref{Subsec:p1}.
\qed

\section{Hydrodynamic limit}\label{Sec:Hydro}

In this section, we speed up the jump rates by a factor $N/(p-q)$ in order to simplify the notations. 
The previous notation $\eta^{\xi_N}(tN/(p-q),x)$ now becomes $\eta(t,x)$.

The theory of solutions of the Burgers equation with zero-flux boundary conditions was developed in B\"urger, Frid and Karlsen~\cite{BFK07}. As it is shown in~\cite{LabbeKPZ}, the unique solution of this PDE coincides with the unique entropy solution of the Burgers equation with appropriate Dirichlet boundary conditions:
\begin{align}\label{Eq:Burgers}
\begin{cases}
\partial_t \rho = -\partial_x \big( \rho(1-\rho) \big)\;,\quad t>0\;, x\in(0,1)\;,\\
\rho(t,0) = 0\;,\quad \rho(t,1) = 1\;,\\
\rho(0,\cdot) = \rho_0(\cdot)\;.
\end{cases}
\end{align}
Therefore, we only need to prove convergence towards the latter object.

\begin{remark}
As it is explained in~\cite{LabbeKPZ}, the particle system with zero-flux boundary conditions could essentially be obtained from the system on the whole line $\bbZ$ where we place only particles after site $N$ (density equal to $1$) and no particle before site $0$ (density equal to $0$): this provides a heuristic explanation for our Dirichlet boundary conditions above.
\end{remark}

The precise definition of the entropy solution of \eqref{Eq:Burgers} is the following. Hereafter $\langle f,g\rangle$ denotes the $L^2([0,1],dy)$ inner product of $f$ and $g$.

\begin{definition}\label{Def:Burgers}
Let $\rho_0 \in L^\infty([0,1],dy)$. We say that a function $(\rho(t,y),t\geq 0, y\in [0,1]) \in L^\infty(\bbR_+ \times [0,1],dt\otimes dy)$ is an entropy solution of the inviscid Burgers equation (\ref{Eq:Burgers}) if for all $\kappa \in [0,1]$ and all $\varphi\in \cC^\infty_c(\bbR_+\times\bbR,\bbR_+)$, we have
\begin{multline*}
\int_0^\infty \Big( \langle \partial_t \varphi(t,\cdot) , \big( \rho(t,\cdot) - \kappa \big)^\pm \rangle+  \langle \partial_y \varphi(t,\cdot) , h^\pm(\rho(t,\cdot),\kappa) \rangle\\
+ \varphi(t,0) (0-\kappa)^\pm + \varphi(t,1) (1-\kappa)^\pm \Big) dt
+ \langle \varphi(0,\cdot) , \big(\rho(0,\cdot) - \kappa \big)^\pm \rangle \geq 0\;,
\end{multline*}
where $h^+(r,\kappa) = \tun_{\bbR_+}(r-\kappa) \big(r(1-r) - \kappa(1-\kappa)\big)$ and $h^-(r,\kappa) = h^+(\kappa,r)$.
\end{definition}

For any given initial condition $\rho_0 \in L^\infty([0,1])$, there exists a unique entropy solution to the inviscid Burgers equation with Dirichlet boundary conditions, see 
Vovelle~\cite{Vovelle}. Let us mention that the original construction of entropy solutions is due to 
Bardos, Le Roux and N\'ed\'elec~\cite{Bardos} in the BV setting, and is extended to the $L^\infty$ 
setting by Otto~\cite{Otto}. One should notice that the solution does 
not necessarily satisfy the boundary conditions but instead, satisfies the so-called BLN conditions. In particular, 
we do not expect the solution to be equal to $1$ at $x=1$ for short times when it starts from $\wedge$: indeed, 
it takes a macroscopic time for the rightmost particle to reach the right boundary and therefore the density of particles at $x=1$ remains null for a while.

Let $(\xi_N)_{N\geq 1}$ be a sequence of initial conditions and let $h(\xi_N)$ be the associated sequence of height functions. Observe that $(u_0^N(x):=\frac1{N} h(\xi_N)(xN), x\in [0,1])$ is $1$-Lipschitz so that it is tight in $\cC([0,1])$.

\begin{lemma}\label{Lemma:Tightness}
For any choice of initial conditions, the sequence of processes $\rho^N$ and $u^N$ are $\bbC$-tight in $\bbD([0,\infty),\cM)$ and $\bbD([0,\infty),\cC([0,1]))$ respectively.
\end{lemma}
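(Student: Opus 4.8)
The plan is to reduce both tightness statements to an elementary Lipschitz-in-time estimate for the height function at a single site. At each fixed time $t$ the random function $u^N(t,\cdot)$ takes values in the set of $1$-Lipschitz functions on $[0,1]$ vanishing at $0$ (consecutive increments of a height function being $\pm1$), which is compact in $\cC([0,1])$ by Arzel\`a--Ascoli; likewise $\rho^N_t$ always lies in $\cM$, which is compact for the weak topology, being a weak-$*$ closed and bounded subset of $\cC([0,1])^*$. Hence the compact-containment condition holds trivially for both sequences, and by the standard Skorohod tightness criterion (see Billingsley~\cite{Billingsley}) it remains only to control the modulus of continuity in time; since moreover all jumps of $u^N$ (resp.\ $\rho^N$) have size at most $2/N$ (resp.\ at most a constant times $1/N$), this control automatically upgrades tightness to $\bbC$-tightness.

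The key observation is the following. Fix a site $x\in\llb 1,N-1\rrb$; by \eqref{generatornk} the value $h(\cdot,x)$ changes only when $x$ is a local maximum of the current profile (by $-2$, at rate $p$) or a local minimum (by $+2$, at rate $q$), so the total rate at which $h(\cdot,x)$ moves is at most $\max(p,q)\le 1$, uniformly in the initial condition and in the rest of the configuration. Consequently one may enlarge the probability space so that the jump times of $h(\cdot,x)$ form a subset of the points of a rate-$1$ Poisson process $\cN^{(x)}$; since each jump has size $2$, this yields deterministically $|h(t,x)-h(s,x)|\le 2\,(\cN^{(x)}_t-\cN^{(x)}_s)$ for $0\le s\le t$, i.e.\ after the time change $t\mapsto Nt/(p-q)$ and division by $N$, $|u^N(t,x/N)-u^N(s,x/N)|\le \tfrac2N\,(\cN^{(x)}_{Nt/(p-q)}-\cN^{(x)}_{Ns/(p-q)})$.

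From here the argument is routine. Fixing $T,\gep>0$, a union bound over $\lceil T/\delta\rceil$ consecutive time-windows of length $\delta$ together with a Chernoff bound for the Poisson increments shows that for each fixed $y\in[0,1]$ the oscillation of $t\mapsto u^N(t,y)$ over time-windows of length $\delta$ inside $[0,T]$ is at most $C\delta/(p-q)$ with probability tending to $1$. Using that $u^N(t,\cdot)$ is $1$-Lipschitz in space — so that $\|u^N(t,\cdot)-u^N(s,\cdot)\|_\infty$ differs by at most $2/M$ from its value on a fixed mesh of size $1/M$ — a union bound over this (fixed) mesh upgrades the pointwise bound to a uniform-in-space modulus of continuity, giving the $\bbC$-tightness of $u^N$. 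To transfer this to $\rho^N$, write $\eta(t,x)=\tfrac12(h(t,x)-h(t,x-1)+1)$, so that for $\varphi\in\cC^1([0,1])$, setting $g_N(x):=h(\tfrac{Nt}{p-q},x)-h(\tfrac{Ns}{p-q},x)$ (with $g_N(0)=0$), summation by parts gives
\begin{equation*}
\langle\rho^N_t,\varphi\rangle-\langle\rho^N_s,\varphi\rangle=\frac1{2N}\Big(g_N(N)\varphi(1)+\sum_{x=1}^{N-1}g_N(x)\big(\varphi(x/N)-\varphi((x+1)/N)\big)\Big),
\end{equation*}
whose right-hand side is bounded by $C(\varphi)\,\|u^N(t,\cdot)-u^N(s,\cdot)\|_\infty$ since $|g_N(x)|\le N\|u^N(t,\cdot)-u^N(s,\cdot)\|_\infty$. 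Thus the time-oscillation of $\langle\rho^N,\varphi\rangle$ is dominated by that of $u^N$; as the weak topology on the compact space $\cM$ is generated by a countable family of such linear functionals, the $\bbC$-tightness of $\rho^N$ in $\bbD([0,\infty),\cM)$ follows.

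I do not anticipate a serious obstacle: the only points deserving care are the uniform-in-everything bound on the flip rate at a site — which is what makes the Poisson domination legitimate for arbitrary, possibly random, initial data — and the passage from a pointwise-in-$x$ oscillation bound to a uniform-in-$x$ one, which is handled cleanly by the spatial Lipschitz property.
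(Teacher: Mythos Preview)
Your proof is correct, but it takes a genuinely different route from the paper's Appendix~\ref{Appendix:Tightness}. The paper treats the two processes separately and with heavier machinery: for $\rho^N$ it uses the Dynkin decomposition $\langle \rho^N_t,\varphi\rangle = \langle \rho^N_s,\varphi\rangle + \int_s^t L^N \langle \rho^N_r,\varphi\rangle\,dr + M^N_{s,t}$, bounds the martingale via its bracket and Burkholder--Davis--Gundy, and bounds the drift crudely; for $u^N$ it establishes moment bounds on time increments, passes to a piecewise-linear time interpolation $\bar u^N$, invokes Kolmogorov's continuity theorem, and then shows $\|u^N-\bar u^N\|_\infty\to 0$ via a covering argument. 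By contrast, you handle everything through a single elementary observation --- the rate-$1$ Poisson domination of $h(\cdot,x)$ at each site --- together with Chernoff and union bounds, and then transfer tightness from $u^N$ to $\rho^N$ by the summation-by-parts identity. Your argument is more self-contained and avoids both BDG and Kolmogorov; the paper's argument is closer to the standard template in hydrodynamic-limit proofs, where the Dynkin decomposition is already set up for other purposes. One minor remark: your sentence ``the total rate at which $h(\cdot,x)$ moves is at most $\max(p,q)\le 1$'' is correct, but it is perhaps cleaner to note directly from the graphical construction in Appendix~\ref{Appendix:Coupling} that the jump times of $h(\cdot,x)$ are contained in those of the superposition $P_x\cup Q_x$, which is already a rate-$1$ Poisson process; this makes the domination explicit without any enlargement of the probability space.
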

\begin{proof}
The arguments are standard so we only give a sketch of proof. To prove tightness of $\rho^N$ it suffices to show that
\begin{equation*}
\varlimsup_{\delta \downarrow 0} \varlimsup_{N\rightarrow \infty} \E\Big[ \sup_{s,t\leq T, |t-s|\leq \delta} \big|\langle \rho^N_t-\rho^N_s,\varphi\rangle\big|\Big] = 0\;,
\end{equation*}
for all $\varphi\in \cC^\infty([0,1])$. To that end, we write
\begin{equation}\label{Eq:Dynkin}
\langle \rho^N_t,\varphi\rangle - \langle \rho^N_s,\varphi\rangle = \int_s^t L^N \langle \rho^N_r,\varphi\rangle dr + M^N_{s,t}\;,
\end{equation}
where $M^N_{s,\cdot}$ is a martingale and $L^N$ is the sped-up generator of our process. Then, it is simple to bound the two terms on the r.h.s, see for instance~\cite[Lemma 4.1]{Reza}.\\
Regarding the tightness of the sequence $u^N$, we first observe that for all $t\geq 0$, the profiles $y\mapsto u^N(t,y)$ are $1$-Lipschitz. Furthermore, for every $y\in [0,1]$, the process $t\mapsto u^N(t,y)$ makes jumps of size at most $2/N$ and at rate at most $N/(p-q)$. Using the moments formula for the Poisson r.v., we deduce that for all $m\geq 1$ we have
$$ \sup_{y\in[0,1]} \E\big[ \big|u^N(t,y)-u^N(s,y)\big|^m\big]^\frac{1}{m} \lesssim \frac{|t-s|^{\frac1{m}}}{N^{\frac{m-1}{m}}} + |t-s|\;,$$
uniformly over all $s,t\geq 0$ and all $N\geq 1$. Then, one defines $\bar{u}^N$ as the continuous-time interpolation of $u^N$ taken at all times $t \in \bbZ/N$. It is straightforward to check that $\bar{u}^N$ is tight using the estimate already obtained on $u^N$. To conclude, we only need to check that $u^N$ and $\bar{u}^N$ are uniformly close on any given compact space-time set: this can be done by bounding the moments of the supremum of $|u^N-\bar{u}^N|$ on boxes of size $1/N\times 1/N$ and then summing over a covering of the given compact set into such boxes, see for instance~\cite[Lemma 4.7]{BG97}.
\end{proof}

%\noindent The proof of this lemma is postponed to Appendix \ref{Appendix:Tightness} as it relies on rather classical tools from the field of scaling limits of particle systems.

Below, we will consider an initial probability measure $\iota_N$ on $\cup_k \Omega_{N,k}^0$ that satisfies the following assumption.

\begin{assumption}\label{Assumption:Density}
There exists a piecewise constant function $f:[0,1]\rightarrow [0,1]$ such that for all $N\geq 1$, $\iota_N = \otimes_{x=1}^N \Be(f(x/N))$ where $\Be(c)$ denotes the Bernoulli $\pm 1$ distribution with parameter $c$.
\end{assumption}

The main step of the proof of Theorem \ref{Th:HydroGene} consists in establishing the hydrodynamic limit starting from elementary initial conditions.

\begin{theorem}\label{Th:HydroDensity}
We work under Assumption \ref{Assumption:Density} and we let $f$ be the density appearing therein. The sequence of empirical densities $\rho^N$ converges in probability in the Skorohod space $\bbD([0,\infty),\cM)$ to the deterministic process $\rho(t,dy) = \rho(t,y)dy$ where $(\rho(t,y),y\in[0,1],t\geq 0)$ is the entropy solution of (\ref{Eq:Burgers}) with initial condition $\rho(0,\cdot) = f(\cdot)$.
\end{theorem}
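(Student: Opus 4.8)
The plan is to follow the strategy of \cite{LabbeKPZ}, adapting it from the weakly asymmetric to the constant-bias (genuinely hyperbolic) regime, and to identify the limit points supplied by tightness as the unique entropy solution of \eqref{Eq:Burgers}. Throughout we use the time change built into Section \ref{Sec:Hydro}, so the generator is $\cL_{N,k}$ of \eqref{generatornk} sped up by $N/(p-q)$.

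\emph{Step 1: reduction to the entropy inequalities.} By Lemma \ref{Lemma:Tightness}, the pair $(\rho^N,u^N)$ is $\bbC$-tight in $\bbD([0,\infty),\cM)\times\bbD([0,\infty),\cC([0,1]))$; extract a convergent subsequence with limit $(\rho,u)$, and note that the relation $u(t,x)=\int_0^x(2\rho(t,y)-1)\dd y$ passes to the limit, so it suffices to control $\rho$. Since the entropy solution of \eqref{Eq:Burgers} with datum $f$ in the sense of Definition \ref{Def:Burgers} exists and is unique (hence deterministic) by Vovelle \cite{Vovelle}, the theorem follows once we show that almost surely every subsequential limit $\rho$ satisfies the Kruzhkov inequalities of Definition \ref{Def:Burgers}; convergence in probability to this common deterministic object is then automatic.

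\emph{Step 2: microscopic entropy inequalities.} Because $r\mapsto(r-\kappa)^\pm$ is not affine, we do not work with a single copy of $\eta$: for each $\kappa\in[0,1]$ we couple $\eta$, via the grand coupling of Proposition \ref{orderpres}, with an auxiliary exclusion process whose macroscopic profile is the constant $\kappa$ (built from locally flat product-Bernoulli blocks), and we track the signed discrepancies between the two copies. Attractiveness ensures that discrepancies of a given sign are conserved and that their number in any macroscopic window stays controlled. Applying the generator to $\varphi(t,x/N)$ tested against the discrepancy density, performing a discrete summation by parts to transfer the discrete gradient onto $\varphi$, and discarding the associated martingale (whose predictable bracket is $O(N)$, hence negligible after the $1/N$ normalisation), one reaches a discrete counterpart of the two inequalities of Definition \ref{Def:Burgers} with three error terms: a current-replacement error, a time-regularity error absorbed by the $\bbC$-tightness modulus, and boundary contributions localised near $x=0$ and $x=1$.

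\emph{Step 3: replacement lemma and boundary behaviour.} Closing the estimate requires each error term to vanish. The replacement error calls for replacing the microscopic current $p\,\eta(x)(1-\eta(x+1))-q\,\eta(x+1)(1-\eta(x))$, averaged over a mesoscopic box, by $(p-q)\,\bar\eta^{\epsilon}(1-\bar\eta^{\epsilon})$ at the box average; for the attractive ASEP started from product-Bernoulli data this is the one-block/two-blocks estimate, and, as in \cite{Reza,LabbeKPZ}, it can be extracted from attractiveness and monotone comparison with staircase (step) initial data together with the $L^1$-contraction of entropy solutions, rather than from a Guo--Papanicolaou--Varadhan-type entropy bound, which is not available at the hyperbolic scale. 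For the boundary, we use that the segment dynamics has zero flux through sites $0$ and $N$; combined with the monotone couplings with the extremal configurations $\wedge$ and $\vee$ and the fact that a particle within $o(N)$ of the left edge drifts to the right at asymptotic speed $(p-q)$ and never returns (symmetrically at the right edge), this forces the empirical density to be $0$ near $x=0$ and $1$ near $x=1$ on every fixed time horizon, which is precisely what the boundary terms $\varphi(t,0)(0-\kappa)^\pm+\varphi(t,1)(1-\kappa)^\pm$ encode (the BLN conditions). Passing to the limit along the subsequence yields the Kruzhkov inequalities for $\rho$, completing Step 1.

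\emph{Main obstacle.} I expect Step 3 to be the delicate point: in the fixed-asymmetry regime the local-equilibrium replacement cannot lean on the classical entropy method, so the coupling/monotonicity machinery must be run carefully, and, above all, one must rule out any anomalous accumulation of mass or entropy defect at the two endpoints — that is, upgrade the zero-flux condition to the correct Dirichlet/BLN behaviour uniformly on compact time intervals. This boundary analysis, absent in the whole-line treatment \cite{Reza} and only available in the $1/N$-asymmetry setting in \cite{LabbeKPZ}, is where the present extension demands the most work.
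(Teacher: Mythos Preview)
Your overall architecture (tightness, then identify limit points via the entropy inequalities of Definition \ref{Def:Burgers}) matches the paper, but the execution of Steps 2--3 diverges from the paper's argument and contains a real gap.

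\textbf{The paper's mechanism.} The paper does \emph{not} obtain the boundary terms $(0-\kappa)^\pm\varphi(t,0)+(1-\kappa)^\pm\varphi(t,1)$ by showing that the empirical density is $0$ near $x=0$ and $1$ near $x=1$; in fact that claim is false on short time scales (starting from $\wedge$, the density near $x=1$ is $0$, not $1$, and the BLN conditions are there precisely because the Dirichlet data need not be attained). Instead, the paper introduces an auxiliary process $\zeta$ on the segment with \emph{open boundaries} (creation at site $1$ at rate proportional to $\kappa$, annihilation at site $N$ at rate proportional to $1-\kappa$) so that $\otimes \mathrm{Be}(\kappa)$ is stationary; the boundary terms then drop out \emph{algebraically} from the generator computation $\tilde{\cL}^{\text{bdry}}(\eta-\zeta)^\pm$ in Lemma \ref{Lemma:MicroIneq}, with no analysis of the actual density near the endpoints.

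\textbf{The replacement step.} For the current replacement the paper does not run a staircase/monotonicity argument directly on the segment. It couples $(\eta,\zeta)$ with a pair $(\heta,\hzeta)$ evolving as ASEP on the \emph{whole line} (where Rezakhanlou's replacement and two-blocks estimates \cite{Reza} apply verbatim by translation invariance), and proves a finite-propagation-speed lemma (Lemma \ref{Lemma:Coupling}) showing that $\eta$ and $\heta$ agree in the bulk on short time windows; a restart trick at times $t_i=iC\epsilon$ then transfers the replacement from $(\heta,\hzeta)$ to $(\eta,\zeta)$. Your proposal to extract the replacement ``from attractiveness and monotone comparison with staircase initial data'' is vague on exactly the point you flag as delicate, and without something like the whole-line coupling plus Lemma \ref{Lemma:Coupling} it is not clear how you avoid the entropy-method obstruction at the hyperbolic scale on a domain with boundaries.

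In short: the genuine gap is the boundary treatment (your density-near-the-boundary claim is incorrect and not what BLN encodes), and the missing idea is the open-boundary auxiliary process $\zeta$ together with the whole-line coupling $(\heta,\hzeta)$ and the finite-speed Lemma \ref{Lemma:Coupling}.
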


Given this result, we turn to the proof of Theorem \ref{Th:HydroGene}.

\begin{proof}[Proof of Theorem \ref{Th:HydroGene}]
Since we assume that $\rho_0^N$ converges weakly to $\rho_0$, it is not difficult to deduce that
\begin{equation}\label{Eq:ICHeight}
\lim_{N\rightarrow\infty} \sup_{y\in [0,1]} \big| u_0^N(y) - u_0(y) \big| = 0\;,
\end{equation}
where $u_0(y) = \int_0^y (2\rho_0(x)-1)dx$. Thanks to Lemma \ref{Lemma:Tightness}, we only need to check that any limiting points $\rho$ of $\rho^N$ is the entropy solution of the Burgers equation (\ref{Eq:Burgers}) starting from $\rho_0$ and that any limiting point $u$ of $u^N$ is its integrated version. The latter property is actually simple to establish once we know that $\rho^N$ converges to $\rho$ so we concentrate on this convergence. 

In the case where the sequence of initial conditions $\rho^N_0$ satisfies Assumption \ref{Assumption:Density}, Theorem \ref{Th:HydroDensity} yields the convergence of $\rho^N$ towards the entropy solution of (\ref{Eq:Burgers}). To extend the scope of this convergence result to a general sequence of initial conditions $\xi_N$, we proceed by approximation. Set $\alpha=\big(1+u_0(1)\big)/2$, and recall that $u_0$ is $1$ Lipschitz so that $\rho_0(x)$ is almost everywhere in $[0,1]$. Fix $\epsilon>0$. One can find two profiles $u^{+,\epsilon}_0, u^{-,\epsilon}_0$ which are $1$-Lipschitz, start from $0$ at $0$, are piecewise affine and 
satisfy the inequalities (see Figure \ref{fig:lesepsilons}):

\begin{figure}[htbp]
\centering
\leavevmode
\begin{center}
 \resizebox{7cm}{!}{\input 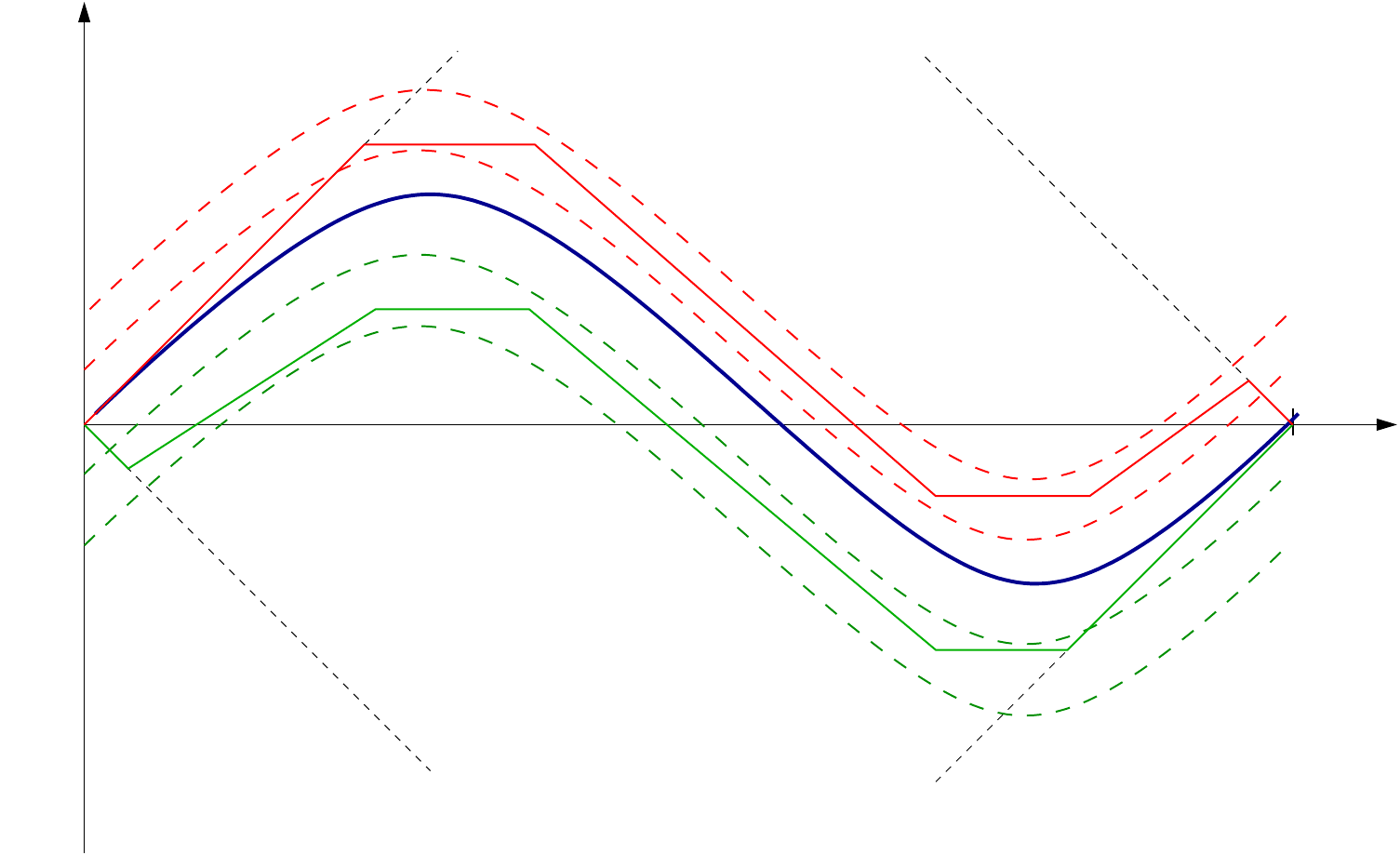_t}
 \end{center}
\caption{ How to bound $u_0$ by two piecewise affine functions: an important feature of the construction is that we force the 
slope of $u^{\pm}_0$ to be $\pm 1$ near the boundary so that the inequality still holds with high probability when the corresponding microscopic height functions are compared.
}
\label{fig:lesepsilons}
\end{figure}
\begin{equation}\label{Eq:BoundsApprox}\begin{split}
u_0(y)-2\epsilon &\leq u^{-,\epsilon}_0(y) \leq \big(u_0(y)-\epsilon\big)\vee(-y)\vee(y-2+2\alpha)\;,\\
\big(u_0(y)+\epsilon\big)\wedge y \wedge (2\alpha-y) &\leq u^{+,\epsilon}_0(y) \leq u_0(y)+2\epsilon\;,
\end{split}\end{equation}
and are such that $\| \rho^{\pm,\epsilon}_0 - \rho_0\|_{L^1}$ goes to $0$ as $\epsilon \downarrow 0$, where $\rho^{\pm,\epsilon}_0 := (\partial_y u^{\pm,\epsilon}_0+1)/2$. Then, for every $N\geq 1$ we consider three initial configurations of the ASEP: one is given by $\xi_N$, the two others $\xi_N^{\pm,\epsilon}$ are random elements in $\cup_k \Omega_{N,k}^0$ with law $\otimes_{x=1}^N \mbox{Be}(\rho_0^{\pm,\epsilon}(x/N))$. We couple these three ASEP in such a way that the order on the height functions is preserved by the dynamics (similarly as in our grand coupling). Equation \eqref{Eq:BoundsApprox} ensures that the probability of the event
$$ h(\xi_N^{-,\epsilon})(t=0,\cdot) \leq h(\xi_N)(t=0,\cdot) \leq h(\xi_N^{+,\epsilon})(t=0,\cdot) \;,$$
goes to $1$ as $N\rightarrow\infty$. Therefore, the probability that these inequalities happen at all times $t\ge 0$ goes to $1$ as well.

Our convergence result applies to $\rho_N^{\pm,\epsilon}$ (which are the empirical densities associated to $h(\xi_N^{\pm,\epsilon})$): the limits $\rho^{\pm,\epsilon}$ are the solutions of (\ref{Eq:Burgers}) starting from $\rho_0^{\pm,\epsilon}$. Let us denote by $u^{\pm,\epsilon}$ the associated integrated solutions. Our coupling ensures that any limit point $u$ of the tight sequence $u^N$ lies in between these two integrated solutions. By the $L^1$ contractivity~\cite[Th. 7.28]{Malek} of the solution to (\ref{Eq:Burgers}), one deduces that as $\epsilon \downarrow 0$, $u^{\pm,\epsilon}$ converges to the integrated solution of (\ref{Eq:Burgers}) starting from $u_0$, thus $u$ coincides with this solution and this concludes the proof.
\end{proof}

We are now left with proving Theorem \ref{Th:HydroDensity}. To that end, it suffices to show that any limit point of the tight sequence $\rho^N$ satisfies the entropy inequalities of Definition \ref{Def:Burgers}. The usual trick that makes the contant $\kappa$ appear in these inequalities is to couple $\eta$ with another particle system $\zeta$ which is stationary with distribution $\otimes_{x=1}^N \mbox{Be}(\kappa)$. Actually, our boundary conditions complicate the proof and it will be convenient to consider another particle system which evolves according to the same dynamics but on the whole line $\bbZ$.

More precisely, we will consider the process $(\eta,\zeta,\heta,\hzeta)$ where each element of the quadruplet is a process that lives in $\{0,1\}^\bbZ$ and such that the following holds. The restriction of $\eta$ to $\lint 1, N\rint$ is a Markov process evolving according to the ASEP dynamics considered from the beginning of this paper, while the restriction of $\eta$ to $\bbZ\backslash\lint 1, N\rint$ remains constant. The process $\zeta$ remains constant outside $\lint 1, N\rint$, while in $\lint 1, N\rint$ it undergoes the same dynamics as $\eta$ except that at site $1$ if there is no particle, then a particle is created at rate $N(2p-1)\kappa/(p-q)$ and at site $N$, if there is a particle, then it is removed at rate $N(2p-1)(1-\kappa)/(p-q)$. Regarding $\heta$ and $\hzeta$, they evolve according to the ASEP $(p,q)$ on the whole line $\bbZ$ without any boundary effect: the dynamics is translation invariant.

Let us now explain how the processes are coupled. For all pairs of consecutive sites in $\lint 1, N\rint$, we make the jumps simultaneous for the four particle systems. For all pairs of consecutive sites in $
\bbZ\backslash\lint 2,N-1\rint$, we make the jumps simultaneous for $\heta$ and $\hzeta$. Let us point out that each of these processes is Markov. Instead of writing down the generator $\tilde{\cL}$ of the quadruplet acting on a general test function, we restrict to test functions involving only two of the four processes. We set $b(x,y) := x(1-y)$. The generator acting on $\eta,\zeta$ is given by
$$ \tilde{\cL}f(\eta,\zeta) = \tilde{\cL}^{\mbox{\tiny bulk}}f(\eta,\zeta) + \tilde{\cL}^{\mbox{\tiny bdry}} f(\eta,\zeta)\;,$$
where
\begin{align*}
\tilde{\cL}^{\mbox{\tiny bulk}}f(\eta,\zeta) &= \frac{N}{p-q}\sum_{k,\ell =1}^{N} \big(p\tun_{\{\ell-k=1\}} +q\tun_{\{k-\ell=1\}}\big) \cG_{k,\ell}(\eta,\zeta)\;,
\end{align*}
where
\begin{align*}
 \cG_{k,\ell}(\eta,\zeta) &:= \big( b(\eta(k),\eta(\ell))\wedge b(\zeta(k),\zeta(\ell)) \big)\big( f(\eta^{k,\ell},\zeta^{k,\ell}) - f(\eta,\zeta)\big)\\
& + \big(b(\eta(k),\eta(\ell))- b(\eta(k),\eta(\ell))\wedge b(\zeta(k),\zeta(\ell)) \big)\big( f(\eta^{k,\ell},\zeta) - f(\eta,\zeta)\big)\\
& + \big( b(\zeta(k),\zeta(\ell))- b(\eta(k),\eta(\ell))\wedge b(\zeta(k),\zeta(\ell)) \big)\big( f(\eta,\zeta^{k,\ell}) - f(\eta,\zeta)\big)\;,
\end{align*}
and, using the notation $\zeta \pm \delta_k$ to denote the particle configuration which coincides with $\zeta$ everywhere except at site $k$ where the occupation is taken to be $\zeta(k) \pm 1$,
\begin{align*}
\tilde{\cL}^{\mbox{\tiny bdry}} f(\eta,\zeta) &= \frac{N}{p-q}(2p-1) \kappa (1-\zeta(1)) \big( f(\eta,\zeta+\delta_1) - f(\eta,\zeta)\big)\\
& + \frac{N}{p-q}(2p-1) (1-\kappa) \zeta(N) \big( f(\eta,\zeta-\delta_{N}) - f(\eta,\zeta)\big)\;.
\end{align*}
The generator acting on $\heta,\hzeta$ is given by
\begin{align*}
\tilde{\cL}f(\heta,\hzeta) =  \frac{N}{p-q}\sum_{k,\ell \in \bbZ} \big(p\tun_{\{\ell-k=1\}} +q\tun_{\{k-\ell=1\}}\big) \cG_{k,\ell}(\heta,\hzeta)\;.
\end{align*}
Let us also provide the expression of the generator acting on $\eta,\heta$:
\begin{align*}
\tilde{\cL}f(\eta,\heta) &=  \frac{N}{p-q}\sum_{k,\ell =1}^N \big(p\tun_{\{\ell-k=1\}} +q\tun_{\{k-\ell=1\}}\big) \cG_{k,\ell}(\eta,\heta)\\
& +\frac{N}{p-q}\!\!\sum_{k,\ell \in \bbZ\backslash \rint 2,,N-1\rint} \!\!\!\big(p\tun_{\{\ell-k=1\}} +q\tun_{\{k-\ell=1\}}\big)\big(f(\eta,\heta^{k,\ell})-f(\eta,\heta)\big)\;.
\end{align*}

Let us introduce an initial condition that will be useful in the sequel. Let $\iota_N$ be a measure on $\cup_k \Omega_{N,k}^0$ that satisfies Assumption \ref{Assumption:Density}. Let the restriction of $\eta_0$ to 
$\lint 1, N\rint$ start with law $\iota_N$ and let the restriction of $\zeta_0$ to $\lint 1, N\rint$ start with law given by a product of Bernoulli measures $\mbox{Be}(\kappa)$. We also let $\eta_0(x)=\zeta_0(x)=0$ for all $x\in \bbZ\backslash \lint 1, N\rint$. These two initial conditions are coupled in the following way: for every $x\in\lint 1, N\rint$, we have $\eta_0(x) \geq \zeta_0(x)$ if and only if $f(x/N) \geq \kappa$, where $f$ is the density arising in Assumption \ref{Assumption:Density}. Additionally, we set $\heta_0(x) = \eta_0(x)$ for all $x\in\bbZ$. Finally, we set $\hzeta_0(x) = \zeta_0(x)$ for all $x\in\lint 1, N\rint$, and we draw the remaining values of $\hzeta_0(\cdot)$ according to an independent sequence of $\mbox{Be}(\kappa)$. The law of the quadruplet starting from this initial condition will be denoted by $\bbQ^N_{\iota_N,\kappa}$.\\

The first step consists in establishing the entropy inequalities at the microscopic level. Recall the notation $b(x,y)=x(1-y)$. We define a function $F_{k,\ell}$ acting on a pair of particle systems on $\bbZ$ as follows. We set $F_{k,\ell}(\eta,\zeta)=1$ if $\eta(k)\geq \zeta(k)$ and $\eta(\ell)\geq \zeta(\ell)$; otherwise we set $F_{k,\ell}(\eta,\zeta)=0$. Moreover we set
$$ H^+(\eta,\zeta) = \big(b(\eta(1),\eta(0)) - b(\zeta(1),\zeta(0))\big) F_{1,0}(\eta,\zeta)\;,\quad H^-(\eta,\zeta) = H^+(\zeta,\eta)\;. $$
We also let
$$ \langle f,g \rangle_N := \frac1{N}\sum_{k\in\bbZ} f(k)g(k)\;.$$

\begin{lemma}[Microscopic inequalities]\label{Lemma:MicroIneq}
We work under Assumption \ref{Assumption:Density}. For all $\varphi\in\cC^\infty_c(\bbR_+\times\bbR,\bbR_+)$, all $\delta > 0$ and all $\kappa\in[0,1]$, we have $\lim_{N\rightarrow\infty} \bbQ^N_{\iota_N,\kappa}\big(I_{\mbox{\tiny micro}} \geq - \delta\big) = 1$ where $I_{\mbox{\tiny micro}}$ denotes either
\begin{align*}
&\int_0^\infty \Big(\Big\langle \partial_t \varphi(t,\cdot) , \big(\eta(t,\cdot) - \zeta(t,\cdot)\big)^\pm \Big\rangle_N + \Big\langle \partial_x \varphi(t,\cdot) ,
H^\pm(\tau_\cdot \eta(t), \tau_\cdot \zeta(t)) \Big\rangle_N \\
&+ \big((0-\kappa)^\pm \varphi(t,0) + (1-\kappa)^\pm \varphi(t,1)\big)\Big)dt+ \Big\langle \varphi(0,\cdot) , \big(\eta(0,\cdot) - \zeta(0,\cdot)\big)^\pm \Big\rangle_N\;,
\end{align*}
or
\begin{align*}
\int_0^\infty &\Big(\Big\langle \partial_t \varphi(t,\cdot) , \big(\heta(t,\cdot) - \hzeta(t,\cdot)\big)^\pm \Big\rangle_N+ \Big\langle \partial_x \varphi(t,\cdot) , H^\pm(\tau_\cdot \heta(t), \tau_\cdot \hzeta(t)) \Big\rangle_N\Big)dt \\
&\quad+ \Big\langle \varphi(0,\cdot) , \big(\heta(0,\cdot) - \hzeta(0,\cdot)\big)^\pm \Big\rangle_N\;.
\end{align*}
\end{lemma}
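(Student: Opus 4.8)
The plan is to prove these inequalities first at the level of the coupled particle system, by applying Dynkin's formula to the discrepancy observable, and then to let $N\to\infty$. Fix $\varphi$, $\kappa$ and, to be concrete, the ``$+$'' version for the pair $(\eta,\zeta)$; the ``$-$'' version and the pair $(\heta,\hzeta)$ are handled along the same lines. Write $D_k(t):=(\eta(t,k)-\zeta(t,k))^+=\tun_{\{\eta(t,k)=1,\,\zeta(t,k)=0\}}$ for the indicator of a positive discrepancy at site $k$, and set $X_t:=\langle\varphi(t,\cdot),D(t)\rangle_N=\tfrac1N\sum_{k}\varphi(t,k/N)D_k(t)$. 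The structural input that drives everything is that the basic coupling encoded in $\cG_{k,\ell}$ never creates a positive discrepancy by a bulk jump: such a jump either leaves the discrepancies unchanged, or transports a single discrepancy to a neighbouring site, or annihilates a pair consisting of a $+$ and a $-$ discrepancy sitting on adjacent sites. Consequently, once the $\partial_t\varphi$ contribution has been extracted, the drift of $X_t$ splits into a transport term, a non-positive annihilation term, and --- for the pair $(\eta,\zeta)$ only --- a boundary (``reservoir'') term coming from the creation of $\zeta$-particles at site $1$ and their removal at site $N$.

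I would then compute this drift. Dynkin's formula gives $X_\infty-X_0=\int_0^\infty(\partial_t+\tilde\cL)X_t\,dt+M_\infty$, with $X_\infty=0$ since $\varphi$ has compact support in time, and $M$ a martingale. The transport term is, up to the prefactor $\tfrac{N}{p-q}$, a sum of bond contributions each proportional to a discrete gradient of $\varphi$; a discrete summation by parts together with a Taylor expansion turns it into $\langle\partial_x\varphi(t,\cdot),H^+(\tau_\cdot\eta(t),\tau_\cdot\zeta(t))\rangle_N$ up to an error bounded deterministically by $C_\varphi/N$ --- this is where the normalisation $\tfrac1{p-q}$ of the speed-up is used, and where one discards the telescoping linear piece $q\,(\eta(k)-\eta(k+1))$ of the ASEP current, whose contribution is again $O(1/N)$. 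The annihilation term is a sum of contributions $-\tfrac1{p-q}\varphi(t,k/N)\times(\text{jump rate})\le0$, using $\varphi\ge0$. For the reservoir term one uses $p-q=2p-1$, so that the boundary rates $N(2p-1)\kappa/(p-q)$ and $N(2p-1)(1-\kappa)/(p-q)$ simplify to $N\kappa$ and $N(1-\kappa)$: at site $1$ the creation of a $\zeta$-particle can only destroy a positive discrepancy, hence a non-positive contribution, while at site $N$ removing a $\zeta$-particle creates a positive discrepancy only when $\eta(N)=1$, with contribution at most $\tfrac1N\varphi(t,1)\cdot N(1-\kappa)=(1-\kappa)^+\varphi(t,1)$. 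Since $(0-\kappa)^+=0$, the reservoir term is bounded above by $(0-\kappa)^+\varphi(t,0)+(1-\kappa)^+\varphi(t,1)$. Rearranging Dynkin's identity, the quantity $I_{\mbox{\tiny micro}}$ equals $\int_0^\infty\big[(0-\kappa)^+\varphi(t,0)+(1-\kappa)^+\varphi(t,1)-(\text{reservoir term})\big]dt$ minus the (non-positive) annihilation integral, minus the $O(1/N)$ Taylor error, minus $M_\infty$; the first two pieces are non-negative, so $I_{\mbox{\tiny micro}}\ge -C_\varphi/N-M_\infty$.

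It then remains to show $M_\infty\to0$ in $L^2(\bbQ^N_{\iota_N,\kappa})$. Splitting the quadratic variation by jump type: pure-transport jumps move $X$ by $O(1/N^2)$ and have total rate $O(N^2)$, contributing $O(1/N^2)$ to $\E[M_\infty^2]$; annihilation and boundary jumps move $X$ by $O(1/N)$, but positive discrepancies remain confined to $\{1,\dots,N\}$, number at most $N$ initially, and are created only at site $N$ at rate $O(N)$, so over the bounded time support of $\varphi$ there are only $O(N)$ such jumps, again contributing $O(1/N)$. Hence $\E[M_\infty^2]=O(1/N)$ and Chebyshev's inequality concludes. The ``$-$'' version is symmetric, with $H^-$, $(0-\kappa)^-=\kappa$, $(1-\kappa)^-=0$ and the roles of sites $1$ and $N$ exchanged. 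For the pair $(\heta,\hzeta)$ the dynamics is translation invariant on $\bbZ$, so there is no reservoir term and hence no $\varphi(t,0),\varphi(t,1)$ contributions, matching the second displayed expression; the only extra point here is to bound the number of discrepancies entering the compact support of $\varphi$ via the flux across its two endpoints, which is $O(N)$ over a bounded time interval.

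The main obstacle is the boundary bookkeeping for $(\eta,\zeta)$: one must reconcile the non-conservative reservoir dynamics at sites $1$ and $N$ --- together with the values of $\eta$ frozen outside $\{1,\dots,N\}$, which forbid any flux through the endpoints --- with the BLN-type boundary entropy-flux terms $(0-\kappa)^\pm\varphi(t,0)+(1-\kappa)^\pm\varphi(t,1)$, getting every sign in the right direction and making sure the discrete summation by parts and the reservoir contributions combine cleanly. The remaining ingredients --- the deterministic $O(1/N)$ control of the Taylor error and the $L^2$ bound on the martingale --- are then routine.
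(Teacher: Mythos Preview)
Your proof is correct and follows essentially the same route as the paper's: apply Dynkin's formula to $\langle\varphi(t,\cdot),(\eta(t,\cdot)-\zeta(t,\cdot))^\pm\rangle_N$, bound the action of the generator by the flux term $\langle\partial_x\varphi,H^\pm\rangle_N$ plus the BLN boundary terms plus a non-positive annihilation piece and an $O(1/N)$ remainder, and then show the martingale vanishes. The paper is terser---it calls the generator bound ``a long calculation'' and cites \cite{LabbeKPZ}, and it invokes Burkholder--Davis--Gundy rather than your direct quadratic-variation estimate---but the structure is identical. Your more explicit treatment of the reservoir terms (creation at site $1$ can only kill a $+$-discrepancy, removal at site $N$ can create one at rate $\le N(1-\kappa)$) and of the martingale jump-size bookkeeping is a faithful unpacking of what the paper leaves implicit.
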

\begin{proof}
This is similar to Lemma 2.10 in~\cite{LabbeKPZ}. Let us recall the main steps here in the case of $(\eta,\zeta)$: the case of $(\heta,\hzeta)$ is simpler since we don't have to deal with the boundary terms. First of all, we set
\begin{multline*}
B_t \! = \!\! \int_0^t \! \Big(\Big\langle \partial_s \varphi(s,\cdot) , \big(\eta(s,\cdot) - \zeta(s,\cdot)\big)^\pm \Big\rangle_N  \!
+ \tilde{\cL}\Big\langle \varphi(s,\cdot) , \big(\eta(s,\cdot) - \zeta(s,\cdot)\big)^\pm \Big\rangle_N \Big)ds \\
+ \Big\langle \varphi(0,\cdot) , \big(\eta(0,\cdot) - \zeta(0,\cdot)\big)^\pm \Big\rangle_N\;.
\end{multline*}
By definition of the generator, we have the identity
$$ \Big\langle \varphi(t,\cdot) , \big(\eta(t,\cdot) - \zeta(t,\cdot)\big)^\pm \Big\rangle_N = B_t + M_t\;,$$
where $M$ is a mean-zero martingale. A long calculation shows that the term in $B_t$ involving the generator is bounded by
$$\Big\langle \partial_x \varphi(s,\cdot) , H^\pm(\tau_\cdot \eta(s), \tau_\cdot \zeta(s)) \Big\rangle_N+ (0-\kappa)^\pm \varphi(s,0) + (1-\kappa)^\pm \varphi(s,1) \;,$$
up to a negligible term of order $1/N$. Moreover, the Burkholder-Davis-Gundy inequality allows one to bound the moments of the martingale and to show that they vanish as $N\rightarrow\infty$. Using the fact that $\varphi$ is compactly supported, one gets $B_t = -M_t$ for $t$ large enough. The assertion of the lemma follows by putting everything together.
\end{proof}

The next step consists in replacing the microscopic quantities by averages on boxes of size $\ell$. We denote by $T_\ell(k) := \lint k-\ell,k+\ell\rint$ and we set 
$$ \ccM_{T_\ell(k)} f = \frac{1}{2\ell+1} \sum_{i\in T_\ell(k)} f(i)\;,$$
for any map $f:\bbZ\rightarrow\bbR$. The invariance by translation of the dynamics of $(\heta,\hzeta)$ ensures that for any $A,T>0$ and for \textit{any} initial condition $(\heta_0,\hzeta_0)$ such that $\hzeta_0$ is a product of $\mbox{Be}(\kappa)$, one has
\begin{equation}\label{Eq:MicroRepl}\begin{split}
\lim_{\ell\rightarrow\infty}\lim_{N\rightarrow\infty} &\bbQ^N \bigg[\int_0^T \frac1N \sum_{k = -A N}^{A N} \Big|\ccM_{T_\ell(k)} (\heta(t)- \hzeta(t))^\pm\\
&\qquad- (\ccM_{T_\ell(k)} \heta(t)- \kappa)^\pm\Big| dt \bigg] = 0\;,\\
\lim_{\ell\rightarrow\infty}\lim_{N\rightarrow\infty} &\bbQ^N \bigg[\int_0^T \frac1N \sum_{k = -A N}^{A N} \Big|\ccM_{T_\ell(k)} H^\pm(\heta(t), \hzeta(t))\\
&\qquad- h^\pm(\ccM_{T_\ell(k)} \heta(t), \kappa)\Big| dt \bigg] = 0\;,
\end{split}\end{equation}
see the arguments on pp.426-427 of Rezakhanlou~\cite{Reza}. These arguments do not apply anymore to $(\eta,\zeta)$. However the next lemma shows that $\eta-\heta$ and $\zeta-\hzeta$ are small in the bulk of the lattice, and therefore, one deduces that (\ref{Eq:MicroRepl}) also holds with $(\heta,\hzeta)$ replaced by $(\eta,\zeta)$. In the lemma below, we let $\bbQ$ be the law of the dynamics starting from some deterministic initial condition $(\eta_0,\zeta_0,\heta_0,\hzeta_0)$.

\begin{lemma}\label{Lemma:Coupling}
There exits a constant $C>0$ such that for all $\epsilon>0$, we have
$$ \lim_{N\rightarrow \infty}\sup_{\eta_0=\heta_0\in\{0,1\}^\bbZ} \sup_{s\in [0,C\epsilon]} \bbQ\Big[ \frac1{N} \sum_{x\in [\epsilon N, N-\epsilon N]} \big|\eta(s,x) - \heta(s,x)\big| \Big] = 0\;, $$
and similarly with $(\zeta,\hzeta)$.
\end{lemma}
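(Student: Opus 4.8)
The plan is to compare $\eta$ with $\heta$ by tracking the set of sites where they disagree, and to show this set of discrepancies cannot invade the bulk $[\epsilon N, N-\epsilon N]$ during a time window of order $\epsilon$. Recall that $\eta$ and $\heta$ agree initially ($\eta_0 = \heta_0$), that they are driven by the same Poisson clocks on all edges inside $\{1,\ldots,N\}$, and that the only difference between the two dynamics is that $\heta$ has additional moves on the edges touching $\{0,1\}$ and $\{N,N+1\}$ (and beyond), as well as the frozen boundary region for $\eta$. Thus discrepancies between $\eta$ and $\heta$ are created only at the two boundaries (sites $1$ and $N$), and once created they are transported by the bulk dynamics. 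Since the height functions of $\eta$ and $\heta$ are ordered as long as the initial ordering persists, it is in fact enough to control how far a discrepancy born at the boundary can travel inward in time $C\epsilon$.

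First I would make the comparison precise at the level of height functions. Coupling $\eta$ and $\heta$ through the grand coupling of Proposition \ref{orderpres} (extended to the line, as in Appendix \ref{Appendix:Coupling}), the difference $|\eta(s,x)-\heta(s,x)|$ is controlled by $|h^\eta(s,x)-h^\eta(s,x-1) - (h^{\heta}(s,x)-h^{\heta}(s,x-1))|$, so it suffices to bound $\sum_{x\in[\epsilon N, N-\epsilon N]}|h^\eta(s,x)-h^{\heta}(s,x)|$. Both height functions start equal; they differ only because at the boundary edges $\heta$ performs extra increments/decrements. The key point is a \emph{finite speed of propagation} estimate: if two exclusion (or height) dynamics on $\bbZ$ are driven by the same clocks except on a set of edges contained in a region $\cR$, then with probability tending to one the two configurations agree on any fixed site at distance $\geq v s + o(N)$ from $\cR$, for $s\leq T$, where $v$ is a deterministic constant (one may take $v$ slightly larger than $p+q=1$, since a discrepancy is displaced at rate at most $1$ in each direction). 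This is a standard consequence of a large-deviation / Poissonian counting argument: the number of "update events" that can chain a discrepancy from the boundary to a site at distance $d$ in time $s$ is dominated by a sum of $d$ i.i.d.\ exponential clock rings, and $\bbP[\mathrm{Poisson\ chain\ reaches\ distance\ } d \mathrm{\ by\ time\ } s]$ decays exponentially in $d$ once $s < d/v$. Choosing $C$ so that $C\epsilon < \epsilon/(2v)$, i.e. $C<1/(2v)$, ensures that for $s\leq C\epsilon$ a discrepancy emanating from site $1$ (resp.\ $N$) cannot reach $[\epsilon N, N-\epsilon N]$ except on an event of probability $e^{-c\epsilon N}\to 0$.

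To turn this into the stated $L^1$ bound, I would let $D_s$ be the number of sites in $[\epsilon N, N-\epsilon N]$ at which $\eta(s,\cdot)$ and $\heta(s,\cdot)$ disagree, and write $\bbQ[N^{-1} D_s] \leq \bbP[D_s \geq 1] \cdot 1 + N^{-1}\E[D_s \ind_{D_s\geq 1}]$; a crude deterministic bound $D_s\leq N$ combined with the exponential estimate $\bbP[D_s\geq 1]\leq e^{-c\epsilon N}$ gives $\bbQ[N^{-1}D_s]\leq e^{-c\epsilon N} + e^{-c\epsilon N}\to 0$, uniformly over $s\in[0,C\epsilon]$ and over the common deterministic initial condition $\eta_0=\heta_0$. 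The same argument applies verbatim to $(\zeta,\hzeta)$: the dynamics of $\zeta$ and $\hzeta$ differ only through the boundary creation/annihilation at sites $1$ and $N$ and the frozen exterior, so discrepancies are again created only at the boundary and propagate at bounded speed. The main obstacle is making the finite-speed-of-propagation statement rigorous for the ASEP with the somewhat nonstandard coupled generator used here (where the jump rates are the $\wedge$ of the two systems' rates); but this is exactly the mechanism that makes discrepancies behave like a subadditive/attractive front, and the Poissonian clock-counting bound is robust to the precise form of the coupling, since it only uses that each edge rings at total rate $O(1)$ and that a single ring moves any given discrepancy by at most one site.
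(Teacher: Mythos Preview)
Your approach is correct and uses a genuinely different route from the paper. You propose a direct graphical-construction argument: since $\eta$ and $\heta$ start equal and are driven by identical Poisson clocks on all edges inside $\{1,\ldots,N-1\}$, discrepancies are born only at the boundary and the front of discrepancies from site $1$ (or $N$) is stochastically dominated by a Poisson counting process in the edge clocks. A standard large-deviation bound then shows the front cannot reach $[\epsilon N,N-\epsilon N]$ in time $C\epsilon$ for $C$ small enough. One correction: recall that in this section the rates have been sped up by $N/(p-q)$, so each edge rings at total rate $N/(p-q)$, not $p+q=1$; the macroscopic propagation speed is therefore $1/(p-q)$ and the constraint is $C<p-q$, not $C<1/(2(p+q))$. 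Also, the remarks about height-function ordering are irrelevant and can be dropped: the argument works directly at the level of $|\eta-\heta|$ without any monotonicity.

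The paper instead gives an analytic proof that recycles the entropy-production computation from the microscopic inequalities. It introduces a nonnegative test function $\varphi(t,x)$ supported in $x\in[\epsilon/3,1-\epsilon/3]$, strictly positive on $[0,C\epsilon]\times[\epsilon,1-\epsilon]$, and satisfying $\partial_t\varphi+2|\partial_x\varphi|\leq 0$. Writing $\langle\varphi(t,\cdot),(\eta_t-\heta_t)^\pm\rangle_N=I^\pm_t+M^\pm_t$, the generator bound from Lemma~\ref{Lemma:MicroIneq} together with the pointwise estimate $|H(\tau_k\eta,\tau_k\heta)|\leq |\eta(k)-\heta(k)|+|\eta(k+1)-\heta(k+1)|$ gives $I^+_t+I^-_t\leq C_\varphi/N$; since the martingale has mean zero one concludes $\bbQ\langle\varphi,|\eta-\heta|\rangle_N=O(1/N)$. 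In effect the paper encodes finite propagation speed through the sub-characteristic condition on $\varphi$ rather than by tracking the discrepancy front pathwise. Your argument is more transparent probabilistically and in fact yields the stronger statement that the bulk discrepancy count is zero with exponentially high probability; the paper's argument integrates cleanly with the entropy framework used throughout Section~\ref{Sec:Hydro} and avoids any case analysis about how discrepancies move under the basic coupling.
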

This lemma is in the spirit of~\cite[Lemma 3.3]{Baha}.
\begin{proof}
Fix $\epsilon > 0$. Let $\varphi$ be a function from $\bbR_+\times\bbR$ into $\bbR_+$ such that:\begin{enumerate}
\item $\varphi(t,x)=0$ as soon as $x\notin[\epsilon/3,1-\epsilon/3]$,
\item $\varphi(t,x) > c$ for all $(t,x) \in [0,C\epsilon]\times [\epsilon,1-\epsilon]$ and for some $c,C>0$,
\item $\partial_t \varphi + 2\big|\partial_x \varphi\big| \leq 0$.
\end{enumerate}
Such a function exists. Take for instance $\varphi(t,x) = \Phi(6t \epsilon^{-1} + q(x))$ where $\Phi:\bbR\rightarrow [0,1]$ is smooth, non-increasing, equal to $1$ on $\bbR_-$ and to $0$ on $[1,\infty)$, and $q:\bbR\mapsto [0,1]$ is equal to $0$ on $[\epsilon,1-\epsilon]$, to $1$ on $(-\infty,\epsilon/3]\cup[1-\epsilon/3,\infty)$ and is such that $\|q'\|_\infty \leq 3\epsilon^{-1}$.\\
Since $\eta_0=\hat{\eta}_0$ and $\varphi$ vanishes on the boundaries, we have
\begin{align}\label{Eq:LemmaCoupling}
\big\langle \varphi(t,\cdot),(\eta_t - \hat{\eta}_t)^\pm\big\rangle_N = I_t^\pm + M_t^\pm\;,
\end{align}
where $M_t^\pm$ is a martingale and
$$I_t^\pm=\int_0^t \Big( \big\langle \partial_s\varphi(s,\cdot),(\eta_s - \hat{\eta}_s)^\pm\big\rangle_N +  \tilde{\cL}\big\langle \varphi(s,\cdot),(\eta_s - \hat{\eta}_s)^\pm\big\rangle_N\Big) ds\;.$$
Using the computation in the proof of Lemma \ref{Lemma:MicroIneq}, we obtain
\begin{align*}
I_t^\pm &\leq \int_0^t \Big( \big\langle \partial_s \varphi(s,\cdot) , 
(\eta_s - \hat{\eta}_s)^\pm \big\rangle_N + \big\langle \partial_x\varphi(s,\cdot) , 
H^\pm(\tau_\cdot \eta_s, \tau_\cdot \hat{\eta}_s) \big\rangle_N \Big) ds\\
&+ C_{\phi} N^{-1}\;,
\end{align*}
where $C_{\phi}$ is a positive constant which depends only $\phi$ and could be made explicit (we will use the same notation for similar constant, as we do not believe it should yield confusion).

Let $H:=H^+ + H^-$. By considering all possible configurations, we can check that
$$ \big| H(\tau_k \eta_s , \tau_k \hat{\eta}_s) \big| \leq | \eta_s(k)-\hat{\eta}_s(k)| + | \eta_s(k+1)-\hat{\eta}_s(k+1)|\;. $$
Furthermore, we have
\begin{align*}
\big\langle \partial_s \varphi(s,\cdot) , |\eta_s - \hat{\eta}_s| \big\rangle_N &= \big\langle \partial_s \varphi(s,\cdot) , \frac{| \eta_s(\cdot)-\hat{\eta}_s(\cdot)| + | \eta_s(\cdot+1)-\hat{\eta}_s(\cdot+1)|}{2}\big\rangle_N\\
&+  C_{\phi} N^{-1}\;,
\end{align*}
uniformly over all $s\geq 0$ and all $N\geq 1$. Consequently, we get
\begin{align*}
I_t^+ + I_t^- &\leq  C_{\phi} N^{-1}+ \int_0^t \Big\langle \partial_s \varphi(s,\cdot)\\
&\quad+ 2 \big|\partial_x \varphi(s,\cdot)\big| , \frac{| \eta_s(\cdot)-\hat{\eta}_s(\cdot)| + | \eta_s(\cdot+1)-\hat{\eta}_s(\cdot+1)|}{2} \Big\rangle_N ds\\
&\leq  C_{\phi} N^{-1}\;,
\end{align*}
uniformly over all $t$ in a compact set. Using the properties of the function $\varphi$, we deduce that uniformly over all $t\in [0,C\epsilon]$
\begin{align*}
&\bbQ\Big[ \frac1{N} \sum_{x\in [\epsilon N, N-\epsilon N]} \big|\eta(t,x) - \heta(t,x)\big| \Big]\\
&\leq \frac1{c}\bbQ\Big[ \big\langle \varphi(t,\cdot),|\eta_t - \hat{\eta}_t|\big\rangle_N\Big]\leq \frac1{c}\bbQ[M_t^+ + M_t^-] + C_{\phi} N^{-1}\le  C_{\phi} N^{-1} \;,
\end{align*}
so that the statement of the lemma follows.
\end{proof}

\begin{lemma}\label{Lemma:IneqMeso}
We work under Assumption \ref{Assumption:Density}. For all $\varphi\in\cC^\infty_c(\bbR_+\times\bbR,\bbR_+)$, all $\delta > 0$ and all $\kappa\in[0,1]$, we have $\lim_{\ell\rightarrow\infty}\lim_{N\rightarrow\infty} \bbQ^N_{\iota_N,\kappa}(I_{\mbox{\tiny meso}} \geq -\delta) = 1$ where $I_{\mbox{\tiny meso}}$ is given by
\begin{align*}
&\int_0^\infty \Big(\Big\langle \partial_t \varphi(t,\cdot) , \big(\ccM_{T_\ell(\cdot)}\eta(t) - \kappa\big)^\pm \Big\rangle_N+ \Big\langle \partial_x \varphi(t,\cdot) , h^\pm(\ccM_{T_\ell(\cdot)} \eta(t), \kappa) \Big\rangle_N \\
&+ \big((0-\kappa)^\pm \varphi(t,0) + (1-\kappa)^\pm \varphi(t,1)\big)\Big)dt+ \Big\langle \varphi(0,\cdot) , \big(\ccM_{T_\ell(\cdot)}\eta(0) - \kappa\big)^\pm \Big\rangle\;.
\end{align*}
\end{lemma}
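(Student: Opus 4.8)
The plan is to derive the mesoscopic inequalities from the microscopic ones of Lemma \ref{Lemma:MicroIneq} in two moves: first smooth the test function so that pointwise occupation variables get replaced by box-averages over windows of size $\ell$, then invoke the one-block replacement \eqref{Eq:MicroRepl} — which, as explained just after Lemma \ref{Lemma:Coupling}, holds for the boundary-driven pair $(\eta,\zeta)$ away from the two endpoints — to turn those box-averages of $(\eta-\zeta)^\pm$ and of $H^\pm$ into $(\ccM_{T_\ell(\cdot)}\eta-\kappa)^\pm$ and $h^\pm(\ccM_{T_\ell(\cdot)}\eta,\kappa)$. Throughout I would work under Assumption \ref{Assumption:Density} and fix $\varphi$ with support in $[0,T]\times[-A,A]$; since the terms $(0-\kappa)^\pm\varphi(t,0)+(1-\kappa)^\pm\varphi(t,1)$ appear verbatim in both $I_{\mbox{\tiny micro}}$ and $I_{\mbox{\tiny meso}}$, it suffices to match the pairings against $\partial_t\varphi$ and $\partial_x\varphi$ together with the $t=0$ term.

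First I would perform the smoothing. For a bounded $g:\bbZ\to[-1,1]$ and a discrete Lipschitz $\psi$ one has $\langle\psi,g\rangle_N-\langle\ccM_{T_\ell(\cdot)}\psi,g\rangle_N=\langle\psi-\ccM_{T_\ell(\cdot)}\psi,g\rangle_N$, and $|\psi(k)-\ccM_{T_\ell(k)}\psi|\le (\ell/N)\,\mathrm{Lip}(\psi)$ on the $O(N)$ sites where $\psi$ is supported, so this difference is $O(\ell/N)$. Applying it with $\psi=\partial_t\varphi(t,\cdot),\ g=(\eta_t-\zeta_t)^\pm$ and with $\psi=\partial_x\varphi(t,\cdot),\ g=\big(k\mapsto H^\pm(\tau_k\eta_t,\tau_k\zeta_t)\big)$, integrating over $t\in[0,T]$, and doing the same for the $t=0$ term, one sees that $I_{\mbox{\tiny micro}}$ differs by $O(\ell/N)=o_N(1)$ (for fixed $\ell$) from the same expression with $\ccM_{T_\ell(\cdot)}(\eta_t-\zeta_t)^\pm$ and $\ccM_{T_\ell(\cdot)}H^\pm(\tau_\cdot\eta_t,\tau_\cdot\zeta_t)$ in place of the pointwise quantities.

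Next I would carry out the replacement, splitting the spatial sums — for a fixed $\epsilon>0$ — into the bulk $\{k:\epsilon N\le k\le(1-\epsilon)N\}$ and the two complementary strips. On the bulk, \eqref{Eq:MicroRepl}, valid for $(\eta,\zeta)$ there thanks to Lemma \ref{Lemma:Coupling}, lets one replace in $L^1$ space-time average $\ccM_{T_\ell(k)}(\eta_t-\zeta_t)^\pm$ by $(\ccM_{T_\ell(k)}\eta_t-\kappa)^\pm$ and $\ccM_{T_\ell(k)}H^\pm(\tau_\cdot\eta_t,\tau_\cdot\zeta_t)$ by $h^\pm(\ccM_{T_\ell(k)}\eta_t,\kappa)$; since $\partial_t\varphi,\partial_x\varphi$ are bounded, the resulting errors go to $0$ in $\bbQ^N_{\iota_N,\kappa}$-probability as $N\to\infty$ and then $\ell\to\infty$. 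On each boundary strip, all of $(\eta_t-\zeta_t)^\pm$, $(\ccM_{T_\ell(\cdot)}\eta_t-\kappa)^\pm$, $H^\pm$ and $h^\pm$ lie in $[-1,1]$ while $\partial_t\varphi,\partial_x\varphi$ are bounded, so the strip contributes at most $C\epsilon$ to the smoothed $I_{\mbox{\tiny micro}}$ and at most $C\epsilon$ to $I_{\mbox{\tiny meso}}$, hence at most $2C\epsilon$ to their difference, uniformly in $N$ and $\ell$. For the $t=0$ term, since $\eta_0\sim\iota_N$ and $\zeta_0\sim\otimes_x\Be(\kappa)$ are coupled so that $\eta_0(x)\ge\zeta_0(x)$ precisely when $f(x/N)\ge\kappa$, a law of large numbers for product measures gives $\ccM_{T_\ell(k)}(\eta_0-\zeta_0)^\pm\to(f(k/N)-\kappa)^\pm=(\ccM_{T_\ell(k)}\eta_0-\kappa)^\pm$ (first $N\to\infty$, then $\ell\to\infty$), off a negligible set of $k$ near the finitely many jumps of the piecewise constant $f$.

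Putting the pieces together: given $\delta>0$, I would choose $\epsilon$ with $2C\epsilon<\delta/4$, then let $N\to\infty$ and then $\ell\to\infty$, which makes $\bbQ^N_{\iota_N,\kappa}(|I_{\mbox{\tiny micro}}-I_{\mbox{\tiny meso}}|>\delta/2)\to0$; combined with $\bbQ^N_{\iota_N,\kappa}(I_{\mbox{\tiny micro}}\ge-\delta/2)\to1$ from Lemma \ref{Lemma:MicroIneq}, this yields $\lim_{\ell\to\infty}\lim_{N\to\infty}\bbQ^N_{\iota_N,\kappa}(I_{\mbox{\tiny meso}}\ge-\delta)=1$. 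I expect the real work to sit in the replacement step: one must keep the boundary-strip error uniform in $N$ and $\ell$ while being allowed to apply the one-block estimate only in the bulk — exactly the issue the coupling Lemma \ref{Lemma:Coupling} is designed to bypass, and the place where the boundary-driven character of $\eta$ (absent from the translation-invariant setting of \cite{Reza} and \cite{LabbeKPZ}) actually bites; the smoothing and the $t=0$ law of large numbers are routine.
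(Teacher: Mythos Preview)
Your proposal follows the same architecture as the paper's proof --- smoothing to pass from $I_{\mbox{\tiny micro}}$ to box-averaged quantities, then replacing $\ccM_{T_\ell(\cdot)}(\eta-\zeta)^\pm$ and $\ccM_{T_\ell(\cdot)}H^\pm$ by $(\ccM_{T_\ell(\cdot)}\eta-\kappa)^\pm$ and $h^\pm(\ccM_{T_\ell(\cdot)}\eta,\kappa)$ via a bulk/boundary split, with a separate argument at $t=0$. The smoothing step and the $t=0$ argument are fine and essentially match the paper.

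There is, however, a genuine gap in your replacement step. You invoke \eqref{Eq:MicroRepl} for $(\eta,\zeta)$ in the bulk ``thanks to Lemma~\ref{Lemma:Coupling}'', citing the paper's sentence just before that lemma. But that sentence is a preview, not a standalone result: the actual deduction is precisely the content of the proof of Lemma~\ref{Lemma:IneqMeso}, and it requires a device you have not mentioned. The point is that Lemma~\ref{Lemma:Coupling} only controls $\frac1N\sum_{x\in[\epsilon N,(1-\epsilon)N]}|\eta(s,x)-\heta(s,x)|$ for $s\in[0,C\epsilon]$; it says nothing about $s$ of order~$1$, and indeed the boundary discrepancy between $\eta$ and $\heta$ propagates macroscopically over $[0,T]$. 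The paper gets around this by \emph{reinitialising} $(\heta,\hzeta)$ to $(\eta,\zeta)$ at each time $t_i=iC\epsilon$, so that on every subinterval $[t_i,t_{i+1})$ the short-time coupling applies (giving control of the first, second and fourth terms in the decomposition \eqref{Eq:MesoDecompo}), while on each such subinterval the translation-invariant replacement \eqref{Eq:MicroRepl} applies to $(\heta,\hzeta)$ because $\hzeta(t_i)=\zeta(t_i)$ is still product $\mathrm{Be}(\kappa)$ by stationarity of $\zeta$. Summing the $\lfloor T/(C\epsilon)\rfloor$ pieces gives the bulk replacement over $[0,T]$. Without this time-chopping, your citation of \eqref{Eq:MicroRepl} for $(\eta,\zeta)$ is circular: it is exactly what the lemma is proving.
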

\begin{proof}
From Lemma \ref{Lemma:MicroIneq} and the smoothness of $\varphi$, we deduce that
$$ \lim_{\ell\rightarrow\infty}\lim_{N\rightarrow\infty} \bbQ^N_{\iota_N,\kappa}(J_{\mbox{\tiny micro}} \geq -\delta) = 1\;,$$
where $J_{\mbox{\tiny micro}}$ is given by
\begin{align*}
\int_0^\infty &\Big(\frac1{N} \sum_{k=1}^N \Big( \partial_t \varphi(t,k) \ccM_{T_\ell(k)}\big(\eta(t) - \zeta(t)\big)^\pm + \partial_x \varphi(t,k) \ccM_{T_\ell(k)} H^\pm(\eta(t),\zeta(t)) \Big) \\
&+ \big((0-\kappa)^\pm \varphi(t,0) + (1-\kappa)^\pm \varphi(t,1)\big)\Big)dt
&+\frac1{N} \sum_{k=1}^N \varphi(0,k) \ccM_{T_\ell(k)}\big(\eta(0) - \zeta(0)\big)^\pm \;.
\end{align*}
Since $\varphi$ is compactly supported, we can restrict the time integral to $[0,T]$ for some large enough $T>0$. The statement of the lemma follows if we are able to show that as $N\rightarrow\infty$ and $\ell\rightarrow\infty$ the $\bbQ^N_{\iota_N,\kappa}$-expectations of the following three quantities vanish
$$\int_0^T \frac1{N} \sum_{k=1}^N \Big| \ccM_{T_\ell(k)}\big(\eta(t) - \zeta(t)\big)^\pm - \big(\ccM_{T_\ell(k)}\eta(t) - \kappa\big)^\pm\Big| dt\;,$$
$$\int_0^T \frac1{N} \sum_{k=1}^N \Big| \ccM_{T_\ell(k)}H^\pm(\eta(t),\zeta(t)) - h^\pm(\ccM_{T_\ell(k)} \eta(t), \kappa)\Big| dt\;,$$
$$\frac1{N} \sum_{k=1}^N \Big| \ccM_{T_\ell(k)}\big(\eta(0) - \zeta(0)\big)^\pm - \big(\ccM_{T_\ell(k)}\eta(0) - \kappa\big)^\pm\Big|\;.$$
%\begin{align*}
%\int_0^T \frac1{N} \sum_{k=1}^N \Big| \ccM_{T_\ell(k)}\big(\eta(t) - \zeta(t)\big)^\pm - \big(\ccM_{T_\ell(k)}\eta(t) - \kappa\big)^\pm\Big| dt\;,\\
%\int_0^T \frac1{N} \sum_{k=1}^N \Big| \ccM_{T_\ell(k)}H^\pm(\eta(t),\zeta(t)) - h^\pm(\ccM_{T_\ell(k)} \eta(t), \kappa)\Big| dt\;,\\
%\frac1{N} \sum_{k=1}^N \Big| \ccM_{T_\ell(k)}\big(\eta(0) - \zeta(0)\big)^\pm - \big(\ccM_{T_\ell(k)}\eta(0) - \kappa\big)^\pm\Big|\;.
%\end{align*}
For the third one, it suffices to use the fact that the number of sign changes of $k\mapsto \eta(0,k)-\zeta(0,k)$ is uniformly bounded over $N\geq 1$ (this is a consequence of our coupling of the initial conditions). We now concentrate on the convergence of the second expression, since the convergence of the first follows from similar arguments. Fix $\epsilon > 0$. Let $(\eta,\zeta,\heta,\hzeta)$ be the process defined previously in this section except that at every time $t_i = i C\epsilon$, $i\geq 1$, we reinitialise $\heta$ and $\hzeta$ by letting them be equal to $\eta$ and $\zeta$ at this same time. We let $\bbQ^N$ be the law of the corresponding process. Then we write,
\begin{align}\label{Eq:MesoDecompo}\begin{split}
&\Big| \ccM_{T_\ell(k)}H^\pm(\eta(t),\zeta(t)) - h^\pm(\ccM_{T_\ell(k)} \eta(t), \kappa)\Big|\\
&\leq \Big| \ccM_{T_\ell(k)}H^\pm(\eta(t),\zeta(t)) - \ccM_{T_\ell(k)}H^\pm(\heta(t),\zeta(t))\Big|\\
&\quad+ \Big| \ccM_{T_\ell(k)}H^\pm(\heta(t),\zeta(t)) - \ccM_{T_\ell(k)}H^\pm(\heta(t),\hzeta(t))\Big|\\
&\quad+ \Big|\ccM_{T_\ell(k)}H^\pm(\heta(t),\hzeta(t)) - h^\pm(\ccM_{T_\ell(k)} \heta(t), \kappa)\Big|\\
&\quad+ \Big|h^\pm(\ccM_{T_\ell(k)} \heta(t), \kappa)-h^\pm(\ccM_{T_\ell(k)} \eta(t), \kappa)\Big|\;,\end{split}
\end{align}
and we bound separately the contributions coming from the terms arising on the right hand side. Notice that (\ref{Eq:MicroRepl}) still holds for $(\heta,\hzeta)$ as long as we apply it to interval of times of the form $[t_i,t_{i+1})$ (since our modified dynamics coincides with the original one on these intervals). Therefore, for every $i\geq 0$
\begin{align*}
\lim_{\ell\rightarrow\infty}\lim_{N\rightarrow\infty} \bbQ^N\!\bigg[ \int_{t_i}^{t_{i+1}}\!\! \frac1{N} \sum_{k=1}^N \Big|\ccM_{T_\ell(k)}H^\pm(\heta(t),\hzeta(t)) - h^\pm(\ccM_{T_\ell(k)} \heta(t), \kappa)\Big| dt \bigg]= 0\;,
\end{align*}
and we deduce that
\begin{align*}
\lim_{\ell\rightarrow\infty}\lim_{N\rightarrow\infty} \bbQ^N\bigg[ \int_0^T \frac1{N} \sum_{k=1}^N \Big|\ccM_{T_\ell(k)}H^\pm(\heta(t),\hzeta(t)) - h^\pm(\ccM_{T_\ell(k)} \heta(t), \kappa)\Big| dt \bigg] = 0\;.
\end{align*}
On the other hand, as long as $\ell \leq \epsilon N$ we write
\begin{align*}
&\int_0^T \frac1{N} \sum_{k=1}^N \Big| \ccM_{T_\ell(k)}H^\pm(\eta(t),\zeta(t)) - \ccM_{T_\ell(k)}H^\pm(\heta(t),\zeta(t))\Big| dt\\
&\lesssim 4\epsilon T + \sum_{i=0}^{\lfloor T/C\epsilon \rfloor} \int_{t_i}^{t_{i+1}} \frac1{N} \sum_{k=\lceil \epsilon N \rceil}^{N-\lfloor\epsilon N\rfloor} \Big| H^\pm(\tau_k\eta(t),\tau_k\zeta(t)) - H^\pm(\tau_k\heta(t),\tau_k\zeta(t))\Big| dt\;.
\end{align*}
To bound the second term on the right hand side, we first notice that there exists $K>0$ such that for any particle configurations $\eta_1,\eta_2,\eta'_1$ and $\eta'_2$, we have
$$ |H^\pm(\eta_1,\eta_2) - H^\pm(\eta'_1,\eta'_2)| \leq K \sum_{j=0,1} \sum_{m=1,2} |\eta_m(j)-\eta'_m(j)|\;. $$
Consequently, Lemma \ref{Lemma:Coupling} ensures that
\begin{align*}
\lim_{\ell\rightarrow\infty}\lim_{N\rightarrow\infty} \sum_{i=0}^{\lfloor T/\epsilon \rfloor} \bbQ^N \bigg[\int_{t_i}^{t_{i+1}} \frac1{N} &\sum_{k=\lceil \epsilon N \rceil}^{N-\lfloor\epsilon N\rfloor} \Big| H^\pm(\tau_k\eta(t),\tau_k\zeta(t))\\
&- H^\pm(\tau_k\heta(t),\tau_k\zeta(t))\Big| dt\bigg] = 0\;.
\end{align*}
The same argument allows to control the second term in (\ref{Eq:MesoDecompo}). Regarding the fourth term, it suffices to use the Lipschitz continuity of $h^\pm$ and Lemma \ref{Lemma:Coupling}. This concludes the proof.
\end{proof}
\begin{proof}[Proof of Theorem \ref{Th:HydroDensity}]
The two-blocks estimate~\cite[Lemma 6.6]{Reza} ensures that one can replace averages on boxes of size $\ell$ by averages on boxes of size $\epsilon N$. Therefore, we deduce that the conclusion of Lemma \ref{Lemma:IneqMeso} still holds upon such a replacement. Finally, one relies on classical arguments to show that this is sufficient to get the entropy inequalities, we refer the interested reader to~\cite[Proof of Th 2.7]{LabbeKPZ} for the details.
\end{proof}

\section{Locating the leftmost particle using the hydrodynamic profile}\label{sec:lmprmes}

In this section we explain how Propositions \ref{th:fronteerlimit} and \ref{th:fronteerlimit2} can be deduced from the hydrodynamic limit of the height function.
The case $\alpha=0$ is a bit particular as, in that case, the limit of the height function is trivial under the scaling we consider. We tackle this case 
separately in Subsection \ref{sec:zeralfa}

\subsection{Particles performing ASEP on an infinite line}\label{sec:infinite}

In order to complete the proof of Propositions \ref{th:fronteerlimit} and \ref{th:fronteerlimit2}, we combine Corollary \ref{th:hydrosimples} with a result that controls
the speed of particles in sparse regions: the purpose of the present section is to expose the latter result.

We consider $n$ particles performing an ASEP on the infinite line $\bbZ$ with asymmetry $(p,q)$, $n\in \lint 1,N \rint$ and we want to obtain a lower bound on the displacement of the leftmost particle.
We perform this operation in two steps: first we prove a concentration result and then we estimate the mean via stochastic
comparison with a system in the stationary state.

In this section, instead of using zeros and ones to denote presence and absence of particles, we work with the state-space
\begin{equation*}
 \gO_n:= \{ \heta=(\heta_1,\dots,\heta_n)\in \bbZ^n \ : \ \heta_1<\heta_2<\dots<\heta_n \}.
\end{equation*}
With this notation, $\heta_i$ denotes the position of the $i$-th particle starting from the left.
For the sake of using stochastic comparisons we introduce  the order 
\begin{equation}\label{stikos}
 \heta \le \heta' \quad \Leftrightarrow \quad \forall i\in \llb 1, n\rrb, \ \heta_i\le \heta'_i\;.
\end{equation}
Note that this order is the opposite of the order introduced in \eqref{eq:orderheight} at the level of height functions,
but we believe that this will not raise any confusion.

We let $\heta(t):=(\heta_1(t),\dots,\heta_n(t))$ denote the ASEP on $\bbZ$ with jump rates $p$ to the right and $q$ to the left, and
with the initial condition $\heta_i(0)=i$. Its distribution is denoted by $\bbP$.
Our aim is to show that on ``large'' time-scales (i.e. larger than $n$),
the speed of all particles is equal to what it would be in the absence of the exclusion rule: namely, $p-q$.
The following result is valid for any given sequence $(n_N)_{N\ge 1}$ satisfying $n_N\in \lint 1,N \rint$.

\begin{proposition}\label{th:partiasymp}
 Given $K>0$ there exists a constant $C(p,K)$ such that with high probability
 \begin{equation*}
 \forall t\in [0,KN]\;, \forall i \in \lint 1, n \rint\;,  \quad  |\heta_i(t)-(p-q)t|\le C \sqrt{N} \max(\sqrt{n},(\log N)^{10}).
 \end{equation*}
\end{proposition}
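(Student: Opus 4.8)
The plan is to follow the two-step route announced just above the statement: first a martingale concentration estimate that reduces the claim to the size of the \emph{mean} displacement of each particle, and then a stochastic comparison with a stationary ASEP to control that mean. Throughout, ``with high probability'' (w.h.p.) means with probability tending to $1$ as $N\to\infty$.

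\emph{Step 1 (concentration).} Fix $i\in\lint 1,n\rint$ and use the Doob--Meyer decomposition $\heta_i(t)=i+A_i(t)+M_i(t)$, where $A_i(t)=\int_0^t d_i(s)\,\dd s$ with infinitesimal drift $d_i(s)=p\mathbf{1}_{\{\heta_{i+1}(s)>\heta_i(s)+1\}}-q\mathbf{1}_{\{\heta_{i-1}(s)<\heta_i(s)-1\}}\in[-q,p]$ (with the usual conventions at $i=1$ and $i=n$), and $M_i$ is a pure-jump martingale with $\pm1$ jumps and compensator $\langle M_i\rangle_t\le(p+q)t\le t$. A Bernstein-type martingale inequality gives $\bbP(\sup_{s\le t}|M_i(s)|\ge\lambda)\le 2\exp(-\lambda^2/(2t+2\lambda))$; choosing $\lambda=c\sqrt N(\log N)^{10}$ makes the exponent of order $(\log N)^{20}$ uniformly over $t\le KN$, which survives a union bound over $i\le n\le N$ and over integer times in $\lint 0,\lceil KN\rceil\rrb$ (the oscillation of $M_i$ over a unit time interval being controlled by a $\mathrm{Poisson}(1)$-tail, since particle $i$ makes at most $\mathrm{Poisson}(1)$ many jumps there). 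Hence, w.h.p., $\sup_{i\le n}\sup_{t\le KN}|\heta_i(t)-A_i(t)|\le c\sqrt N(\log N)^{10}$, so it only remains to control the compensators $A_i(t)$.

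\emph{Step 2 (reduction to the two extreme particles).} Since particles keep their order, $\heta_1(t)\le\heta_i(t)\le\heta_n(t)$ for every $i$, so it suffices to locate $\heta_1(t)$ and $\heta_n(t)$, the discrepancy $\heta_i(0)-\heta_1(0)\le n\le\sqrt{Nn}$ being harmless. Moreover $d_1(s)\le p-q\le d_n(s)$ pointwise, so $A_1(t)\le(p-q)t\le A_n(t)$ for free; combined with Step 1 this already yields $\heta_1(t)\le(p-q)t+c\sqrt N(\log N)^{10}$ and $\heta_n(t)\ge n+(p-q)t-c\sqrt N(\log N)^{10}$. The two remaining bounds — the lower bound for $\heta_1$ and the upper bound for $\heta_n$ — are interchanged by the reflection $x\mapsto-x$, which turns the $(p,q)$-block on $\lint 1,n\rint$ into a $(q,p)$-block on $\lint-n,-1\rint$ and reverses the particle labels. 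It therefore suffices to prove, for an arbitrary bias, that the leftmost particle of a packed block of $n$ particles obeys $\heta_1(t)\ge(p-q)t-C\sqrt{Nn}$ w.h.p., uniformly for $t\le KN$.

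\emph{Step 3 (stationary comparison) and the main obstacle.} By Step 1 it is enough to show $A_1(t)\ge(p-q)t-C\sqrt{Nn}$ w.h.p.; since $(p-q)-d_1(s)=p\,\mathbf{1}_{\{\heta_2(s)=\heta_1(s)+1\}}$, this amounts to bounding the total ``contact time'' $\int_0^t\mathbf{1}_{\{\heta_2(s)=\heta_1(s)+1\}}\,\dd s$, i.e.\ to showing that the leftmost particle is rarely blocked on its right over the time scale $KN$. This is handled by coupling $\heta$, via an order-preserving grand coupling in the spirit of Proposition~\ref{orderpres}, with a stationary ASEP: one adjoins to the block a suitable low-density field, and reads off from the stationary particle current $(p-q)\rho(1-\rho)$ that the corresponding tagged particle has mean speed $(p-q)(1-\rho)$, with fluctuations of order $\sqrt{\cdot}$ again controlled by the martingale bound of Step 1; taking $\rho\asymp\sqrt{n/N}$ makes the $(1-\rho)$-correction contribute only $O(\rho\,KN)=O(\sqrt{Nn})$. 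The delicate point — and the main obstacle of the whole proof — is to set up this comparison so that it simultaneously (i) dominates the packed block in the precise sense that pushes the bound in the correct direction, (ii) is close enough to genuine stationarity for the current identity to apply over the \emph{entire} interval $[0,KN]$ rather than merely for short times, and (iii) has density small enough ($\asymp\sqrt{n/N}$) that the free-speed correction stays at the scale $\sqrt{Nn}$ while remaining dense enough to stay comparable to the spreading block for all $t\le KN$. Everything else — the martingale estimate, the Poisson oscillation bound, and the ordering and reflection reductions — is routine, and assembling the three steps yields the proposition.
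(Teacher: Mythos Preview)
Your outline has the right two-step skeleton (concentration plus stationary comparison), but Step~3 is where the actual content of the proposition lies, and you leave it as an unresolved ``main obstacle''. That is a genuine gap, not a routine detail.

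Concretely, the construction you gesture at---adjoining a Bernoulli($\rho$) field to the packed block and invoking the tagged-particle speed $(p-q)(1-\rho)$---runs into exactly the difficulty you flag in~(ii): the initial block has density~$1$, so the enlarged system is far from stationary and the tagged-particle formula does not apply on the time scale $KN$ without further work. The paper sidesteps this entirely with a different comparison system~$\tilde\eta$: it keeps exactly $n$ particles on~$\bbZ$, starts them with i.i.d.\ Geometric($\mu$) spacings anchored at $\tilde\eta_n(0)=n$, and \emph{modifies the jump rates} (rate $p$ for the leftmost particle, smaller rates $p_1,p_2\le p$ for the middle and rightmost particles, chosen via $\mu(p+q)=\mu p_1+q=p_2+q$) so that this spacing law is \emph{exactly} stationary for all~$t$. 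Since the rates are only reduced and $\tilde\eta(0)\le\heta(0)$, one has $\heta_1(t)\ge\tilde\eta_1(t)$ under the monotone coupling, and stationarity gives $\bbE[\tilde\eta_1(t)]=(p\mu-q)t+1-(n-1)\mu/(1-\mu)$ directly. Optimizing $1-\mu=\min(1,\sqrt{n/t})$ yields $\bbE[\heta_1(t)]\ge(p-q)t-2\max(n,\sqrt{nt})$. This is the missing idea.

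A secondary difference: the paper concentrates $\heta_i(t)$ around its \emph{mean} (via the martingale $s\mapsto\bbE[\heta_i(t)\mid\cF_s]$ at integer times, with increments controlled by the maximum of $n$ Poisson variables, then Lesigne--Voln\'y), rather than around the random compensator $A_i(t)$ as you do. Your Bernstein route is cleaner for the martingale part, but it leaves you needing a \emph{pathwise} bound on the contact time $\int_0^t\mathbf{1}_{\{\heta_2=\heta_1+1\}}$, whereas the paper's decomposition only needs a bound on~$\bbE[\heta_1(t)]$, which the stationary comparison delivers in one line. If you want to keep your Step~1, you would still need to concentrate $\tilde\eta_1(t)$ around its mean, which is an extra (though easy) step.
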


\subsection{Proof of Proposition \ref{th:fronteerlimit} case $\alpha=0$} \label{sec:zeralfa}

We start with the case $\alpha=0$ because it is substantially easier.

In order to obtain an upper bound for $L_{N,k}$, it is sufficient to say that the position of the leftmost particle $\eta_1$ of our original system is stochastically dominated 
by a $(p,q)$-biased simple random walk on the segment (the other $k-1$ particles to the right only slow it down). 
For the latter process, it is simple to check that, properly rescaled, it converges to $\max(t,1)$ as $N\rightarrow\infty$.

To obtain a lower bound, let us consider the ASEP $\eta$ on the segment and the ASEP $\hat\eta$ on the full line with $n=k$. Take $\heta_i(0) = i$ for every $i\in\lint 1, k \rint$. We can couple the two processes in a way that $\eta(s)\ge \hat \eta(s)$ until the time
$$\tau:=\inf\{ s\ge 0 \ : \ \hat \eta_k(s)=N\}.$$ 
Then for any fixed $t<1$ and for any $\gep>0$, Proposition \ref{th:partiasymp} (with $n=k$) implies that with high probability 
$\hat \eta_k(s)<N$ for all $s<(p-q)^{-1}Nt$, and that
$$\hat \eta_1\left(\frac{Nt}{p-q}\right)\ge N(t-\gep).$$
This implies that  $$\eta_1\left(\frac{Nt}{p-q}\right)\ge N(t-\gep).$$

For $t \ge  1$, the argument is essentially the same: one simply has to shift to the left the initial condition for $\heta$. Namely, we set $\hat \eta_i(0)=N(1-t-\gep)+i$ for $\gep>0$ small. Then, for all $s\in [0,(p-q)^{-1}Nt]$, we have $\hat \eta_k(s)<N$ with high probability. Furthermore, $\hat \eta_1\left(\frac{Nt}{p-q}\right)\ge N(1-\gep)$ with high probability so that
$$\eta_1\left(\frac{Nt}{p-q}\right)\ge N(1-\gep)\;,$$
and the lower bound follows.

\subsection{Proof of Proposition \ref{th:fronteerlimit} case $\alpha>0$}\label{sec:posalfa}

Let us fix some time horizon $t$.

In the case $\alpha>0$, we only need a lower bound on $L_{N,k}$ as the upper bound is an easy consequence of Corollary \ref{th:hydrosimples}.
We fix $\delta>0$ small and set $n= \delta N<k$.
We want to compare the first $n$ particles of $\eta(t)$ with an ASEP with $n$ particles on the infinite line $\hat \eta=(\hat \eta_1,\dots,\hat \eta_n)$.
If the initial conditions are ordered we can couple $\hat \eta$ with $\eta$ in such a way that 
\begin{equation}\label{eq:orderrr}
\forall i \in \lint 1, n \rint,\ \quad \hat \eta_i(s)\le \eta_i(s), 
\end{equation}
until the first time that $\hat \eta_n(s)=\eta_{n+1}(s)$.

Corollary \ref{th:hydrosimples} implies that with a probability tending to one we have 
 \begin{equation}\label{eq:hydrocsq}
 \forall s \in[0,t], \quad \eta_{n+1}\left(\frac{Ns}{p-q}\right)\ge N\ell_{\alpha}(s)\;.
 \end{equation}
To ensure that our coupling works until the final time $t$, we choose the initial condition for $\hat \eta$ to be much smaller than that of $\eta$. We set
\begin{equation*}
  \hat \eta_i(0)=N(\ell_{\alpha}(t)-t-\gep)+i.
\end{equation*}
 
From Proposition \ref{th:partiasymp} we have w.h.p.\ for all $i\in \lint 1,n \rint$
 \begin{equation}\label{eq:lln}
 \forall s\in\left[0,\frac{Nt}{p-q}\right]\;,\quad |\hat \eta_i(s)-(p-q)s-N(\ell_{\alpha}(t)-t-\gep)|\le C \sqrt{\delta} N.
 \end{equation}
Together with \eqref{eq:hydrocsq}, and provided $\delta$ is sufficiently small given $\gep$, this implies that the probability of the event
\begin{equation*}
 \forall s\in \left[0, \frac{Nt}{p-q}\right], \quad \hat \eta_n(s)< \eta_{n+1}(s)\;,
\end{equation*}
goes to $1$ as $N\rightarrow\infty$. Thanks to \eqref{eq:orderrr}, this implies in turn that w.h.p.\
\begin{equation*}
 \eta_1\left( \frac{Nt}{p-q} \right)\ge \hat \eta_1\left( \frac{Nt}{p-q} \right),
\end{equation*}
and thus we deduce from \eqref{eq:lln} that
\begin{equation*}
 \hat \eta_1\left( \frac{Nt}{p-q} \right)\ge N \left(\ell_{\alpha}(t)-\gep-C \sqrt{\delta}\right). 
\end{equation*}
As both $\delta$ and $\gep$ can be chosen arbitrarily small, this allows to conclude.

\subsection{Proof of Proposition \ref{th:fronteerlimit2}}

In that case, the system starts from $\xi_N$ instead of $\wedge$, so we need to adapt the arguments. 
We treat only the case where the limiting density $\alpha$ is strictly positive, the case $\alpha=0$ being simpler is left to the reader. Fix $t\ge 0$. The proof of the upper bound is simple. Either $\ell_\rho(t) > \ell+t$, and then it suffices to compare $L_N$ with a biased $(p,q)$ simple random walk as we did in Subsection \ref{sec:zeralfa}. Or $\ell_\rho(t) \leq \ell+t$, and then the upper bound is a consequence of the hydrodynamic limit stated in 
Theorem \ref{Th:HydroGene}. We turn to the proof of the lower bound. The arguments are essentially the same as those presented in Subsection \ref{sec:posalfa}, let us spell out the required modifications. The bound in \eqref{eq:hydrocsq} still holds if one replaces $\ell_\alpha(s)$ by $\ell_\rho(s)$. The initial condition has to be taken as follows:
$$ \heta_i(0) = N\big([(\ell+t)\wedge \ell_\rho(t)]-t-\gep\big) + i\;. $$
Since the speed of $\ell_\rho$ is necessarily bounded above by $1$, we deduce that w.h.p., $\heta_i(0) \le \eta_i(0)$ for all $i\in \lint 1,n\rint$. 
Then, Proposition \ref{th:partiasymp} ensures that
 \begin{equation*}
 \forall s\in\left[0,\frac{Nt}{p-q}\right]\;,\quad |\hat \eta_i(s)-(p-q)s-N\big((\ell+t)\wedge \ell_\rho(t)-t-\gep\big)|\le C \sqrt{\delta} N\;.
 \end{equation*}
The rest of the arguments then apply and we deduce that $\eta_1(t) \ge N\big((\ell+t)\wedge \ell_\rho(t)-\gep - C\sqrt{\delta}\big)$ w.h.p., 
thus concluding the proof of Proposition \ref{th:fronteerlimit2} in the case $\alpha > 0$.\\
% The case $\alpha = 0$ is simpler, we only need to show that $L_N(t)/N$ converges in probability to $(\ell+t)\wedge 1$. Once again, the upper bound follows from a comparison with a biased $(p,q)$ random walk. The proof of the lower bound can be obtained by adapting the arguments in Subsection \ref{sec:zeralfa}: one simply has to shift the initial condition for $\heta$ as follows
% $$ \heta_i(0) = N\big((\ell+t)-t-\gep\big) + i\;,$$
% and the proof works the same.

\subsection{Proof of Proposition \ref{th:partiasymp}}

Since the system is ordered, we only need to prove that the following two inequalities hold w.h.p.
 \begin{equation}\label{eq:devs}\begin{split}
 \forall t\in [0,KN], \quad \heta_1(t)\ge (p-q)t - C \sqrt{N}\max(\sqrt{n}, (\log N)^{10}),\\
 \forall t\in [0,KN], \quad \heta_n(t)\le (p-q)t + C \sqrt{N}\max(\sqrt{n}, (\log N)^{10}).
\end{split} \end{equation}

The proof of the second inequality is in fact very similar to the proof of the first one. Hence we decide to 
discuss in detail only the case of $\heta_1$ and we explain briefly the needed modifications for $\heta_n$ when they are non trivial.

The proof of the result is decomposed into two separate statements: first we show that $\heta_1(nt)$ is concentrated around its mean using a martingale concentration
result from \cite{LesVol01}, 
and then we obtain a lower bound on $\bbE[\heta_1(nt)]$ by comparing the system with a stationary one.

\begin{lemma}\label{conct}
Under the assumptions above, there exists $c>0$ such that for all $n\geq 2$, all $t\ge 0$, all $i\in \lint 1,n\rint$ 
and all $u\geq 0$ such that $u^2/(t (\log n)^2) > 1$, we have
\begin{equation*}
  \bbP\left[ |\heta_i(t)-\bbE[\heta_i(t)]|\ge  u \right]\le 2\exp\left(-c\left(\frac{u^2}{t(\log n)^2}\right)^{1/3}\right). 
\end{equation*}
\end{lemma}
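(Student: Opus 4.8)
The plan is to realize the process $(\heta_i(t))_{t\geq 0}$ as a functional of the graphical construction (a collection of independent Poisson clocks on each oriented edge of $\bbZ$, with rates $p$ to the right and $q$ to the left) and then apply the martingale-type concentration inequality of Les\l{}awski--Volkov type cited in \cite{LesVol01}. Concretely, I would fix the time horizon $t$ and split the construction into independent pieces: for the concentration of $\heta_i(t)$ only the Poisson points inside a space-time window of size $O(t)\times O(\sqrt{t\log n})$ centered around the typical trajectory will matter, because of the exponential tails of a biased random walk. After revealing the clocks in some fixed order, one checks the bounded-difference property: resampling the clock on one edge during a short time interval changes the positions of all particles by at most a bounded amount, with the crucial point that the total \emph{number} of clock rings that actually influence $\heta_i(t)$ is itself concentrated and of order $t$. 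This is exactly the situation handled by the concentration bound for sums with a random (but controlled) number of bounded-influence increments.

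The key steps, in order, would be: (1) set up the graphical construction and a filtration obtained by exposing the clock rings in chronological order; (2) show that $\heta_i(t)$ depends in a $1$-Lipschitz (or $c$-Lipschitz) way on the outcome of each individual ring, and that the relevant rings number at most $O(t)$ up to events of probability $e^{-c t}$ (for $u^2/(t(\log n)^2)>1$ this error is absorbed); (3) invoke the concentration inequality from \cite{LesVol01}, whose output for a martingale with $N'$ increments each bounded by a constant and with the stated curvature/variance control gives a stretched-exponential bound of the form $2\exp(-c (u^2/(t(\log n)^2))^{1/3})$; the factor $(\log n)^2$ enters because bounding the number of particles that can be displaced by a single ring and chaining over the $n$ particles costs a $\log n$ in the Lipschitz constant, and the cube-root exponent is the characteristic loss of the Les\l{}awski--Volkov bound when the number of increments is itself only known up to a sub-Gaussian fluctuation. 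Finally, a union bound over $i\in\lint 1,n\rint$ (or rather just over $i=1$ and $i=n$ as used later) is harmless at this scale.

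The main obstacle I expect is step (2): quantifying precisely how a single clock ring propagates through the exclusion interaction. Unlike independent random walks, flipping one clock can trigger a cascade of ``pushes'' among the particles, so one must argue that the total accumulated displacement is still $O(1)$ per ring in expectation and, more importantly, that the cascade length has light tails --- this is where the $(\log n)$ factors are generated and must be tracked carefully. A convenient device is the second-class particle / discrepancy representation: the difference between the process with and without a given ring is governed by a single discrepancy (second-class particle) whose displacement over time $t$ is itself sub-ballistic with Gaussian-type tails, which feeds back into the bounded-difference estimate. Once this Lipschitz-in-the-clocks bound is in hand with the right dependence on $t$ and $\log n$, plugging into \cite{LesVol01} is mechanical.
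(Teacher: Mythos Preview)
Your proposal takes a different route from the paper, and the route has a genuine gap.

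\textbf{What the paper does.} The paper does not reveal individual clock rings. Instead, for integer $t$ it forms the Doob martingale $M^t_s=\bbE[\heta_i(t)\mid\cF_s]-\bbE[\heta_i(t)]$ for $s=0,1,\dots,t$, where $\cF_s$ is generated by the whole trajectory up to time $s$. The increment $\Delta M^t_s$ is controlled using translation invariance and monotonicity of the exclusion process: during $(s-1,s]$ the entire configuration satisfies $\heta(s-1)-\bar L(s)\le\heta(s)\le\heta(s-1)+\bar R(s)$, where $\bar R(s),\bar L(s)$ are the \emph{maximum} number of right/left jumps made by any one of the $n$ particles in that unit interval. This yields $|\Delta M^t_s|\le p+\max(\bar R(s),\bar L(s))$. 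Since $\bar R(s),\bar L(s)$ are maxima of $n$ Poisson variables of mean at most $1$, one gets $\bbP[|\Delta M^t_s|\ge u]\le Cn e^{-c_pu}$, hence $\bbE[e^{(\log n)^{-1}\Delta M^t_s}]\le K$. Applying \cite[Theorem~3.2]{LesVol01} (Azuma with truncation for exponentially-tailed increments) to a martingale with exactly $t$ increments finishes the proof; the $(\log n)^2$ in the denominator is the square of this increment scale, and the $1/3$ exponent is the loss from the truncation step in \cite{LesVol01}.

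\textbf{Where your plan fails.} Your step~(2) claims that the number of relevant clock rings is $O(t)$. It is not: there are $n$ particles, each attempting $O(t)$ jumps, so the graphical construction produces $O(nt)$ rings in the relevant space-time window. With $nt$ bounded increments, Azuma-type bounds give fluctuations of order $\sqrt{nt}$, which is too weak for the application (where $n\sim\delta N$ and $t\sim N$, so $\sqrt{nt}\sim N$ rather than the required $\sqrt{Nn}$). The second-class particle idea does not rescue this: even if each ring changes $\heta_i(t)$ by $O(1)$, the count of rings is still wrong by a factor of $n$. Your explanation of the $(\log n)$ factor (``chaining over the $n$ particles'') and of the $1/3$ exponent (``number of increments known only up to sub-Gaussian fluctuation'') are both off: in the paper the number of increments is deterministically $t$, the $\log n$ is the typical size of a max of $n$ mean-$1$ Poissons, and the $1/3$ comes purely from the heavy-tail truncation in \cite{LesVol01}.

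\textbf{What to take away.} The decisive trick is to discretize in \emph{time} rather than in clock rings, and to use monotonicity plus translation invariance so that one unit of time contributes a single increment whose size is governed by a maximum over particles, not a sum. This collapses the $nt$ rings into $t$ increments of size $\sim\log n$, which is exactly the right scale.
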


\begin{lemma}\label{damean}
 With the assumptions above we have 
 \begin{equation*}\begin{split}
 \bbE[\heta_1(t)]&\ge  (p-q)t- 2\max(n,\sqrt{tn})\;,\\
 \bbE[\heta_n(t)]&\le  (p-q)t+ 2\max(n,\sqrt{tn})\;.
\end{split} \end{equation*}
\end{lemma}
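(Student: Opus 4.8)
The plan is to reduce everything to the lower bound on $\bbE[\heta_1(t)]$: the two inequalities are mirror images under the reflection $x\mapsto -x$ together with the relabelling $\heta_i\mapsto -\heta_{n+1-i}$, which turns the $(p,q)$-ASEP started from $\llbracket 1,n\rrbracket$ into the $(q,p)$-ASEP started from $\llbracket -n,-1\rrbracket$ and exchanges the leftmost particle with minus the rightmost one. So I will only discuss $\bbE[\heta_1(t)]$. The term $(p-q)t$ is the ``free'' drift; the error $\max(n,\sqrt{tn})$ should account for the transient of duration of order $n$ during which the initial block is still dense in front of the leftmost particle, plus the speed defect caused by the low (but positive, of order $\sqrt{n/t}$) density that $\heta_1$ sees once it is released.

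First I would dispose of the regime $t\lesssim n$: the leftmost particle jumps left at rate exactly $q$ and right jumps only help, so $\heta_1(t)$ dominates $1$ minus a $\mathrm{Poisson}(qt)$ variable, whence $\bbE[\heta_1(t)]\ge 1-qt\ge (p-q)t-2\max(n,\sqrt{tn})$ whenever $pt\le 2\sqrt{tn}+1$, which holds on a range of $t$ of order $n$. For larger $t$, fix $\rho\asymp\sqrt{n/t}$ and introduce the ASEP $\zeta$ on $\bbZ$ started from the configuration that occupies $\llbracket 1,n\rrbracket$ and equals an independent product Bernoulli$(\rho)$ on $\llbracket n+1,\infty)$. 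Since adding particles can only hinder the rightward progress of the ones already present, the basic attractive coupling gives $\heta_1(t)\ge\zeta_1(t)$ for the leftmost particle $\zeta_1$ of $\zeta$, and the task becomes to lower-bound $\bbE[\zeta_1(t)]$.

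Now $\zeta_1$ is always the overall leftmost particle, so it jumps left at rate exactly $q$ and jumps right at rate $p$ precisely when the site ahead of it is empty; hence $\bbE[\zeta_1(t)]=1+(p-q)t-p\int_0^t\bbP\big(\zeta(s,\zeta_1(s)+1)=1\big)\,ds$, and it suffices to bound this integral by $O(\sqrt{tn})$ — observe that this gives the mean directly, with no concentration input. I would split the integral at $t_\ast\asymp\sqrt{tn}$, bounding the integrand by $1$ on $[0,t_\ast]$ (contribution $O(\sqrt{tn})$) and, on $[t_\ast,t]$, using that by time $t_\ast$ the block of mass $n$ has spread into the density-$\rho$ sea enough that the site just ahead of $\zeta_1$ is occupied with probability $O(\rho)$ (contribution $O(\rho t)=O(\sqrt{tn})$). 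With $\rho\asymp\sqrt{n/t}$ this yields $\bbE[\heta_1(t)]\ge\bbE[\zeta_1(t)]\ge (p-q)t-O(\sqrt{tn})$, the constant being brought down to $2$ by a little care in the estimates and in the patching at $t\asymp n$. The bound on $\bbE[\heta_n(t)]$ then follows from the same argument applied to the rightmost particle, or directly from the reflection symmetry above.

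The hard part will be the estimate used on $[t_\ast,t]$: that the finite block of $n$ particles dissolves into the sea within time of order $\sqrt{tn}$, quantitatively enough that afterwards the occupied density immediately in front of the leftmost particle has dropped to $O(\rho)$. This is a local-equilibrium statement which must be made effective with the correct error; to get it I would either set up one more monotone comparison bounding the block's residual contribution by a free-particle computation, or invoke the whole-line hydrodynamic limit (in the spirit of Rezakhanlou~\cite{Reza}) together with a local-equilibrium bound. The remaining points — fixing the direction of each attractive coupling, dealing with the conditioning on the random position of $\zeta_1$, and the bookkeeping on constants between the two regimes — should be routine.
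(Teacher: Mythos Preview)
Your overall strategy --- compare $\heta$ with a system carrying extra particles at density $\rho\asymp\sqrt{n/t}$ to the right --- is on the right track, but the step you yourself flag as ``the hard part'' is a genuine gap, and closing it looks harder than the lemma itself. You need that for $s\ge t_*\asymp\sqrt{tn}$ the site immediately to the right of $\zeta_1(s)$ is occupied with probability $O(\rho)$. This is a \emph{one-site} local-equilibrium statement, at the (random) location of the leftmost particle; hydrodynamic results such as~\cite{Reza} give only block-averaged densities, only asymptotically, and with no control on constants. Even the heuristic that the block of $n$ particles has dissolved by time $\sqrt{tn}$ into the sea of density $\rho$ demands a quantitative estimate on the rarefaction fan that is not obviously easier than the bound you are after.

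The paper sidesteps all of this. Instead of adding a Bernoulli sea and waiting for equilibration, it compares $\heta$ with a system $\tilde\eta$ whose inter-particle gaps are i.i.d.\ geometric with parameter $\mu$, and whose right-jump rates are slightly \emph{lowered} on all but the leftmost particle so that this geometric product law is \emph{exactly stationary} for the gap process. Stationarity forces the drift of $\tilde\eta_1$ to equal $p\mu-q$ at \emph{every} time --- no local equilibrium, no waiting; the only price is the initial shift $\bbE[\tilde\eta_1(0)]=1-(n-1)\mu/(1-\mu)$. Optimising $1-\mu=\min(1,\sqrt{n/t})$ gives the bound in one line. The upper bound on $\bbE[\heta_n(t)]$ is handled by the mirror construction (raising some jump rates so that geometric gaps are again stationary and $\tilde\eta\ge\heta$), rather than by your reflection, which turns the $(p,q)$-ASEP into a $(q,p)$-ASEP and would require the first inequality to have been proved for arbitrary bias.
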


We postpone the proofs of the lemmas to the end of the subsection and we proceed to the proof of \eqref{eq:devs}. Combining the two lemmas, we obtain easily that for any $t\in [0,KN]$ we have 

\begin{equation*}
 \bbP\left[ \heta_1(t)\le  (p-q)t - C \sqrt{N}\max\left(\sqrt{n},(\log N)^4\right) \right] \le \exp(-c (\log N)^2)\;,
\end{equation*}
for some constants $c,C >0$. Hence, the probability that there exists $t\in \big\{jN^{-4}, j\in \lint 1, KN^5\rint\big\}$ such that
\begin{equation*}
\heta_1(t)\le  (p-q)t - C \sqrt{N}\max\left(\sqrt{n},(\log N)^4\right) \;,
\end{equation*}
is bounded by $KN^5 \exp(-c (\log N)^2)$ which vanishes as $N\rightarrow\infty$. Then, we notice that the probability that there exists an interval $[jN^{-4}, (j+1)N^{-4})$ on which $\heta_1$ makes more than one jump is bounded by a term of order $N^{5}$ times the probability that a Poisson clock rings more than once in a time interval of length $N^{-4}$, that is, by a term of order $N^{-3}$. We deduce that the probability that there exists $t\in [0,KN^5]$ such that
\begin{equation*}
\heta_1(t)\le  (p-q)t - C \sqrt{N}\max\left(\sqrt{n},(\log N)^4\right)-1 \;,
\end{equation*}
is vanishing with $N$, from which we deduce \eqref{eq:devs} for $\heta_1$. A very similar argument yields \eqref{eq:devs} for $\heta_n$.

\begin{proof}[Proof of Lemma \ref{conct}]
First, let us consider the case where $t$ is an integer. Fix $i\in \lint 1,n\rint$. For such a $t$, we define the martingale  $(M^t_s, s\in \lint 0,t\rint)$ by 
\begin{equation*}
M^t_s:= \bbE[\heta_i(t) \ | \ \cF_s] - \bbE[\heta_i(t)]\;,\quad \cF_s:=\sigma\big(\heta(u), u\in[0,s] \big)\;.
\end{equation*}

We are going to prove tail bounds on the increments of $M^t$ to obtain concentration. For convenience, we set $\Delta M^t_s := M^t_s - M^t_{s-1}$ for any
$s\in\lint 1,t\rint$. We let $\bar R(s)$, resp.\ $\bar L(s)$, denote the maximal number of jumps to the right, resp.\ left, performed by a particle in the system during the time interval $(s-1,s]$,
\begin{equation*}\begin{split}
        \bar R(s)&:=\max_{i\in \llb 1,n \rrb} \#\{\ t\in(s-1,s] \ : \ \heta_i(t)=\heta_i(t_{-})+1 \ \}=\max_{i\in \llb 1,n \rrb} R_i(s)\;,\\        
        \bar L(s)&:=\max_{i\in \llb 1,n \rrb} \#\{\ t\in(s-1,s] \ : \ \heta_i(t)=\heta_i(t_{-})-1\ \}=\max_{i\in \llb 1,n \rrb} L_i(s)\;.  
                \end{split}
\end{equation*}
At time $s$ we have 
$$ \heta(s-1)-  \bar L(s)  \le \heta(s)\le \heta(s-1)+  \bar R(s).$$
where for $k\in \bbN$ and $\heta \in \gO_n$,  
$$\heta+k:=(\heta_1+k,\dots,\heta_n+k)\;.$$
Let us now consider two initial conditions given by $\heta(s-1)$ on the one hand, and $\heta(s)+\bar L(s)$ on the other hand, and let us run the ASEP dynamics for both systems for a time length $t-s$: the two configurations are stochastically ordered, and their laws coincide with the laws of $\heta(t-1)$ conditionally given $\cF_{s-1}$ for the first one and of $\heta(t)+\bar L(s)$ conditionally given $\cF_s$ for the second one. A similar reasoning can be applied to $\heta(s-1)$ and $\heta(s)+\bar R(s)$. Therefore, we get for all $t\geq s$
\begin{equation*}\begin{split}
\bbE[\heta_i(t) \ | \ \cF_s]&\le \bbE[\heta_i(t-1) \ | \ \cF_{s-1}]+\bar R(s)\;,\\
\bbE[\heta_i(t) \ | \ \cF_s]&\ge \bbE[\heta_i(t-1) \ | \ \cF_{s-1}]-\bar L(s)\;.
\end{split}\end{equation*}
Furthermore as we have  
$-L_i(t) \le \heta_i(t)-\heta_i(t-1) \le R_i(t)$, and as 
both variables    $R_i(t)$ and  $L_i(t)$ are, conditionally to $\cF_{t-1}\supset \cF_{s-1}$, dominated by Poisson random variables of parameters $p$ and $q$, we have for any $s\le t$,
\begin{equation*}
  |\bbE[\heta_i(t) \ | \ \cF_{s-1}]-\bbE[\heta_i(t-1) \ | \ \cF_{s-1}]|\le \max(p,q)=p\;.
\end{equation*}
Then, we write
\begin{align*}
\big| \Delta M^t_s\big| &\le \big|\bbE[\heta_i(t) \ | \ \cF_{s-1}]-\bbE[\heta_i(t-1) \ | \ \cF_{s-1}] \big|\\
&\quad+ \big|\bbE[\heta_i(t) \ | \ \cF_{s}]-\bbE[\heta_i(t-1) \ | \ \cF_{s-1}] \big|\\
&\le p + \max( \bar R(s), \bar L(s))\;.
\end{align*}
As $\bar R(s)$ and $\bar L(s)$ are bounded above by the maxima of $n$ Poisson variables of mean $p$ and $q$, we obtain that
for any constant $c_p>0$ there exists $C>0$ such that
\begin{equation}\label{Eq:BoundDeltaM}
 \bbP[|\Delta M_s^t| \ge u]\le Cn e^{-c_p u}\;,\quad \forall u\geq 0\;.
\end{equation}
This implies the existence of $K>0$, independent of $n$, such that
$$\bbE\left[e^{(\log n)^{-1}\Delta M_s^t}\right]\le K\;.$$
Hence, we can apply \cite[Theorem 3.2]{LesVol01} (which is simply Azuma's inequality combined with some truncation argument for the increments)
to the martingale $(\log n)^{-1}M_s^t$. More precisely, we apply the bound obtained at the end of the proof of Theorem 3.2 therein (equation right below (11)) and deduce that the asserted concentration estimate holds.\\
So far, we have proven the bound when $t\in \bbN$. To treat the general case $t\geq 0$, it suffices to bound the increment $M^t_t - M^t_{\lfloor t \rfloor}$. Inspecting the arguments above, we observe that \eqref{Eq:BoundDeltaM} still holds in that case, so that the proof carries through.
\end{proof}

\begin{proof}[Proof of Lemma \ref{damean}]
We notice that adding particles to the right, reducing the drift of some particles, or changing the initial condition by shifting the particles to the left have the effect of slowing down $\heta_1(t)$ in the sense that the system obtained after such modifications is dominated by the original one.

We consider more specifically the following modification of the dynamics which we call $\tilde \eta$:
\begin{itemize}
 \item  $\tilde\eta_n(0)=n$ and $(\tilde \eta_{i+1}-\tilde \eta_i)_{i=1,\ldots,n-1}$ 
 are IID geometric random variables of parameter $\mu<1$, that is, $P[\tilde \eta_{i+1}-\tilde \eta_i=k]=(1-\mu)\mu^{k-1}$,
 \item  The jump rate to the left is still $q$ but the jump rate to the right is $p$ for the first (leftmost) particle, $p_1$ for particles labeled from $2$ to $n-1$ and $p_2$ for the last (rightmost) one.
\end{itemize}
It can be checked that the product of geometric laws with parameter $\mu$ is stationary (but not reversible in general) for the Markov chain 
$\left[(\tilde \eta_{i+1}(t)-\tilde \eta_i(t))_{i=1}^{n-1}\right]_{t\ge 0}$, provided that
\begin{equation*}
 \mu(p+q)=\mu p_1+q=p_2+q\;.
\end{equation*}
Note that this implies $p_2\le p_1 \le p$, and therefore as we have $\heta(0)\ge \tilde \eta(0)$ this implies that there exists a coupling such that
\begin{equation}\label{eq:campari}
\forall t \ge 0, \quad \heta(t) \ge  \tilde \eta(t)\;.
\end{equation}
As the increments are stationary, the expected drift of the first particle is the same as the initial drift and thus
$$\forall t\ge 0, \partial_t \bbE[\tilde \eta_1(t)]=p\mu- q\;.$$ 
Moreover 
\begin{equation*}
\bbE[\tilde \eta_1(0)]=1-(n-1)\frac{\mu}{1-\mu}.
\end{equation*}
Using \eqref{eq:campari} this implies that
\begin{equation}\label{lvb}
\bbE[\heta_1(t)]\ge 1+(p\mu- q) t- \frac{\mu (n-1)}{1-\mu}\ge (p-q) t -(1-\mu) t- \frac{n}{1-\mu}.
\end{equation}
Choosing $\mu$ such that $1-\mu=\min(1, \sqrt{n/t})$ we obtain 
\begin{equation*}
\bbE[\heta_1(t)]\ge (p-q)t- 2\max(n, \sqrt{nt})\;,
\end{equation*}
as required.

To establish the asserted result for the rightmost particle, we consider an analogous system where particle spacings have the same initial distribution (IID geometric random variables 
with parameter $\mu$),
but we fix $\tilde \eta_1(0)=1$ and set the jump rate to the right to be $p_4$ for the first particle, $p_3$ for the particles with labels from $2$ to $n-1$ and $p$ for the particle with label $n$, where 
\begin{equation*}
 \mu(p_4+q)=\mu p_3+q=p+q\;.
\end{equation*}
We can couple $\heta$ and $\tilde \eta$ in a way such that $\tilde \eta(t)\ge  \heta(t)$ for all $t\geq 0$. The speed of the rightmost particle is given by $p-\mu q$, and its 
initial mean is $1+(n-1)\frac{1}{1-\mu}$. Taking $1-\mu = \min(1,\sqrt{n/t})$ yields the bound
$$ \bbE[\heta_n(t)] \le (p-q)t + 2 \max(n,\sqrt{nt})\;,$$
as required.
\end{proof}

{\bf Acknowledgements:} The present work was partially realized during a visit of H.L.\ to CEREMADE, he acknowledges kind hospitality and support.
H.L.\ also acknowledges the support of a productivity grant from CNPq.

\appendix

\section{Grand coupling}\label{Appendix:Coupling}

We are given two collections $(P_i, i\in\lint 1, N-1 \rint)$ and $(Q_i,i\in\lint 1, N-1 \rint)$ of independent Poisson processes
with jump rates $p$ and $q$ respectively. The grand coupling for the biased card shuffling is defined as follows. For any initial
condition $\xi \in \cS_N$, the process $\sigma^\xi$ starts from $\sigma^\xi_0=\xi$ and is piecewise constant outside of the jump times of $P_i$ and $Q_i$.
The transition at these latter time are defined as follows:
At every jump time $s > 0$ of $P_i$ we place
the cards at sites $i,i+1$ in the increasing order, that is, $\sigma^\xi_s = \sigma^\xi_{s-} \circ \tau_i$  if $\sigma^\xi_{s-}(i) > \sigma^\xi_{s-}(i+1)$
and $\sigma^\xi_s = \sigma^\xi_{s-}$ otherwise. At every jump time $s >0$ of $Q_i$ we place the cards at sites $i,i+1$ in the decreasing order, that is, $\sigma^\xi_s = \sigma^\xi_{s-} \circ \tau_i$  
if $\sigma^\xi_{s-}(i) < \sigma^\xi_{s-}(i+1)$
and $\sigma^\xi_s = \sigma^\xi_{s-}$ otherwise.

\medskip

Taking the image of this process through the maps $h_k:\cS_N\rightarrow \Omega_{N,k}$ for $k\in \llb 1,N-1\rrb$, we get a grand coupling of the asymmetric simple exclusion processes. The dynamics at the level of the height functions can be restated as follows: at a jump time of $P_i$, if the height function makes an upwards corner at site $i$ then it flips into a downwards corner; similarly, at a jump time of $Q_i$, if the height function makes a downwards corner at site $i$ then it flips into an upwards corner.

Let us check that the dynamics preserves the order. To that end, it suffices to check that all the transitions do so. Consider a jump time of $P_i$ and suppose we are given two heights functions $h\leq h'$ right before the jump time. If both $h$ and $h'$ (or none of them) have an upwards corner at site $i$, then both flip downwards and the ordering is preserved. If only $h$ has an upwards corner, then the flip can only make $h$ smaller and therefore the ordering is also preserved upon the jump. Let us now suppose that only $h'$ has an upwards corner at site $i$. Inspecting the possible shapes for $h$ at site $i$, and recalling that the set of possible values for the height function at a given site $i$ has a span equal to $2$ we deduce that necessarily $h(i) \leq h'(i)+2$. Therefore, after the jump the ordering is still preserved at site $i$. By symmetry, the arguments are the same for upwards flips. This concludes the proof.

\bibliographystyle{imsart-nameyear}
\bibliography{library}

\begin{thebibliography}{32}
% BibTex style file: imsart-nameyear.bst, 2017-11-03
% Default style options (sort=1,type=nameyear).
% Used options (sort=1,type=nameyear).

\bibitem[\protect\citeauthoryear{Bahadoran}{2012}]{Baha}
\begin{barticle}[author]
\bauthor{\bsnm{Bahadoran},~\bfnm{C.}\binits{C.}}
(\byear{2012}).
\btitle{Hydrodynamics and hydrostatics for a class of asymmetric particle
  systems with open boundaries}.
\bjournal{Comm. Math. Phys.}
\bvolume{310}
\bpages{1--24}.
\bdoi{10.1007/s00220-011-1395-6}
\bmrnumber{2885612}
\end{barticle}
\endbibitem

\bibitem[\protect\citeauthoryear{Bardos, le~Roux and
  N{\'e}d{\'e}lec}{1979}]{Bardos}
\begin{barticle}[author]
\bauthor{\bsnm{Bardos},~\bfnm{C.}\binits{C.}}, \bauthor{\bparticle{le}
  \bsnm{Roux},~\bfnm{A.~Y.}\binits{A.~Y.}} \AND
  \bauthor{\bsnm{N{\'e}d{\'e}lec},~\bfnm{J.~C.}\binits{J.~C.}}
(\byear{1979}).
\btitle{First order quasilinear equations with boundary conditions}.
\bjournal{Comm. Partial Differential Equations}
\bvolume{4}
\bpages{1017--1034}.
\bdoi{10.1080/03605307908820117}
\bmrnumber{542510 (81b:35052)}
\end{barticle}
\endbibitem

\bibitem[\protect\citeauthoryear{Benassi and Fouque}{1987}]{Benfou}
\begin{barticle}[author]
\bauthor{\bsnm{Benassi},~\bfnm{Albert}\binits{A.}} \AND
  \bauthor{\bsnm{Fouque},~\bfnm{Jean-Pierre}\binits{J.-P.}}
(\byear{1987}).
\btitle{Hydrodynamical limit for the asymmetric simple exclusion process}.
\bjournal{Ann. Probab.}
\bvolume{15}
\bpages{546--560}.
\bmrnumber{885130}
\end{barticle}
\endbibitem

\bibitem[\protect\citeauthoryear{Benjamini et~al.}{2005}]{Benjamini}
\begin{barticle}[author]
\bauthor{\bsnm{Benjamini},~\bfnm{Itai}\binits{I.}},
  \bauthor{\bsnm{Berger},~\bfnm{Noam}\binits{N.}},
  \bauthor{\bsnm{Hoffman},~\bfnm{Christopher}\binits{C.}} \AND
  \bauthor{\bsnm{Mossel},~\bfnm{Elchanan}\binits{E.}}
(\byear{2005}).
\btitle{Mixing times of the biased card shuffling and the asymmetric exclusion
  process}.
\bjournal{Trans. Amer. Math. Soc.}
\bvolume{357}
\bpages{3013--3029 (electronic)}.
\bdoi{10.1090/S0002-9947-05-03610-X}
\bmrnumber{2135733}
\end{barticle}
\endbibitem

\bibitem[\protect\citeauthoryear{Bertini and Giacomin}{1997}]{BG97}
\begin{barticle}[author]
\bauthor{\bsnm{Bertini},~\bfnm{Lorenzo}\binits{L.}} \AND
  \bauthor{\bsnm{Giacomin},~\bfnm{Giambattista}\binits{G.}}
(\byear{1997}).
\btitle{Stochastic {B}urgers and {KPZ} equations from particle systems}.
\bjournal{Comm. Math. Phys.}
\bvolume{183}
\bpages{571--607}.
\bdoi{10.1007/s002200050044}
\bmrnumber{1462228 (99e:60212)}
\end{barticle}
\endbibitem

\bibitem[\protect\citeauthoryear{Bhakta et~al.}{2012}]{BPMRS}
\begin{binproceedings}[author]
\bauthor{\bsnm{Bhakta},~\bfnm{Prateek}\binits{P.}},
  \bauthor{\bsnm{Miracle},~\bfnm{Sarah}\binits{S.}},
  \bauthor{\bsnm{Randall},~\bfnm{Dana}\binits{D.}} \AND
  \bauthor{\bsnm{Streib},~\bfnm{Amanda~Pascoe}\binits{A.~P.}}
(\byear{2012}).
\btitle{Mixing times of {M}arkov chains for self-organizing lists and biased
  permutations}.
In \bbooktitle{Proceedings of the {T}wenty-{F}ourth {A}nnual {ACM}-{SIAM}
  {S}ymposium on {D}iscrete {A}lgorithms}
\bpages{1--15}.
\bpublisher{SIAM, Philadelphia, PA}.
\bdoi{10.1190/IST092012-001.93}
\bmrnumber{3185376}
\end{binproceedings}
\endbibitem

\bibitem[\protect\citeauthoryear{Billingsley}{1999}]{Billingsley}
\begin{bbook}[author]
\bauthor{\bsnm{Billingsley},~\bfnm{Patrick}\binits{P.}}
(\byear{1999}).
\btitle{Convergence of probability measures},
\bedition{second} ed.
\bseries{Wiley Series in Probability and Statistics: Probability and
  Statistics}.
\bpublisher{John Wiley \& Sons, Inc., New York}
\bnote{A Wiley-Interscience Publication}.
\bdoi{10.1002/9780470316962}
\bmrnumber{1700749 (2000e:60008)}
\end{bbook}
\endbibitem

\bibitem[\protect\citeauthoryear{B{\"u}rger, Frid and Karlsen}{2007}]{BFK07}
\begin{barticle}[author]
\bauthor{\bsnm{B{\"u}rger},~\bfnm{Raimund}\binits{R.}},
  \bauthor{\bsnm{Frid},~\bfnm{Hermano}\binits{H.}} \AND
  \bauthor{\bsnm{Karlsen},~\bfnm{Kenneth~H.}\binits{K.~H.}}
(\byear{2007}).
\btitle{On the well-posedness of entropy solutions to conservation laws with a
  zero-flux boundary condition}.
\bjournal{J. Math. Anal. Appl.}
\bvolume{326}
\bpages{108--120}.
\bdoi{10.1016/j.jmaa.2006.02.072}
\bmrnumber{2277770}
\end{barticle}
\endbibitem

\bibitem[\protect\citeauthoryear{Caputo, Liggett and
  Richthammer}{2010}]{CapLigRic10}
\begin{barticle}[author]
\bauthor{\bsnm{Caputo},~\bfnm{Pietro}\binits{P.}},
  \bauthor{\bsnm{Liggett},~\bfnm{Thomas~M.}\binits{T.~M.}} \AND
  \bauthor{\bsnm{Richthammer},~\bfnm{Thomas}\binits{T.}}
(\byear{2010}).
\btitle{Proof of {A}ldous' spectral gap conjecture}.
\bjournal{J. Amer. Math. Soc.}
\bvolume{23}
\bpages{831--851}.
\bdoi{10.1090/S0894-0347-10-00659-4}
\bmrnumber{2629990}
\end{barticle}
\endbibitem

\bibitem[\protect\citeauthoryear{Caputo et~al.}{2012}]{Capetal12}
\begin{barticle}[author]
\bauthor{\bsnm{Caputo},~\bfnm{Pietro}\binits{P.}},
  \bauthor{\bsnm{Lacoin},~\bfnm{Hubert}\binits{H.}},
  \bauthor{\bsnm{Martinelli},~\bfnm{Fabio}\binits{F.}},
  \bauthor{\bsnm{Simenhaus},~\bfnm{Fran{\c{c}}ois}\binits{F.}} \AND
  \bauthor{\bsnm{Toninelli},~\bfnm{Fabio~Lucio}\binits{F.~L.}}
(\byear{2012}).
\btitle{Polymer dynamics in the depinned phase: metastability with logarithmic
  barriers}.
\bjournal{Probab. Theory Related Fields}
\bvolume{153}
\bpages{587--641}.
\bdoi{10.1007/s00440-011-0355-6}
\bmrnumber{2948687}
\end{barticle}
\endbibitem

\bibitem[\protect\citeauthoryear{Derrida and Lebowitz}{1998}]{DerLeb98}
\begin{barticle}[author]
\bauthor{\bsnm{Derrida},~\bfnm{Bernard}\binits{B.}} \AND
  \bauthor{\bsnm{Lebowitz},~\bfnm{Joel~L.}\binits{J.~L.}}
(\byear{1998}).
\btitle{Exact Large Deviation Function in the Asymmetric Exclusion Process}.
\bjournal{Phys. Rev. Lett.}
\bvolume{80}
\bpages{209--213}.
\bdoi{10.1103/PhysRevLett.80.209}
\end{barticle}
\endbibitem

\bibitem[\protect\citeauthoryear{Diaconis and Shahshahani}{1987}]{DiSh87}
\begin{barticle}[author]
\bauthor{\bsnm{Diaconis},~\bfnm{Persi}\binits{P.}} \AND
  \bauthor{\bsnm{Shahshahani},~\bfnm{Mehrdad}\binits{M.}}
(\byear{1987}).
\btitle{Time to reach stationarity in the {B}ernoulli-{L}aplace diffusion
  model}.
\bjournal{SIAM J. Math. Anal.}
\bvolume{18}
\bpages{208--218}.
\bdoi{10.1137/0518016}
\bmrnumber{871832}
\end{barticle}
\endbibitem

\bibitem[\protect\citeauthoryear{Ferrari}{2008}]{Fer07}
\begin{bincollection}[author]
\bauthor{\bsnm{Ferrari},~\bfnm{Patrik~L.}\binits{P.~L.}}
(\byear{2008}).
\btitle{The universal {${\rm Airy}_1$} and {${\rm Airy}_2$} processes in the
  totally asymmetric simple exclusion process}.
In \bbooktitle{Integrable systems and random matrices}.
\bseries{Contemp. Math.}
\bvolume{458}
\bpages{321--332}.
\bpublisher{Amer. Math. Soc., Providence, RI}.
\bdoi{10.1090/conm/458/08944}
\bmrnumber{2411915}
\end{bincollection}
\endbibitem

\bibitem[\protect\citeauthoryear{G{\"a}rtner}{1988}]{Gartner88}
\begin{barticle}[author]
\bauthor{\bsnm{G{\"a}rtner},~\bfnm{J{\"u}rgen}\binits{J.}}
(\byear{1988}).
\btitle{Convergence towards {B}urgers' equation and propagation of chaos for
  weakly asymmetric exclusion processes}.
\bjournal{Stochastic Process. Appl.}
\bvolume{27}
\bpages{233--260}.
\bdoi{10.1016/0304-4149(87)90040-8}
\bmrnumber{931030 (89e:60200)}
\end{barticle}
\endbibitem

\bibitem[\protect\citeauthoryear{Greenberg, Pascoe and Randall}{2009}]{GPR09}
\begin{binproceedings}[author]
\bauthor{\bsnm{Greenberg},~\bfnm{Sam}\binits{S.}},
  \bauthor{\bsnm{Pascoe},~\bfnm{Amanda}\binits{A.}} \AND
  \bauthor{\bsnm{Randall},~\bfnm{Dana}\binits{D.}}
(\byear{2009}).
\btitle{Sampling biased lattice configurations using exponential metrics}.
In \bbooktitle{Proceedings of the {T}wentieth {A}nnual {ACM}-{SIAM} {S}ymposium
  on {D}iscrete {A}lgorithms}
\bpages{76--85}.
\bpublisher{SIAM, Philadelphia, PA}.
\bmrnumber{2809307}
\end{binproceedings}
\endbibitem

\bibitem[\protect\citeauthoryear{{Haddadan} and {Winkler}}{2016}]{HadWin16}
\begin{barticle}[author]
\bauthor{\bsnm{{Haddadan}},~\bfnm{S.}\binits{S.}} \AND
  \bauthor{\bsnm{{Winkler}},~\bfnm{P.}\binits{P.}}
(\byear{2016}).
\btitle{{Mixing of Permutations by Biased Transposition}}.
\bjournal{ArXiv e-prints}.
\end{barticle}
\endbibitem

\bibitem[\protect\citeauthoryear{Kipnis and Landim}{1999}]{KipLan}
\begin{bbook}[author]
\bauthor{\bsnm{Kipnis},~\bfnm{Claude}\binits{C.}} \AND
  \bauthor{\bsnm{Landim},~\bfnm{Claudio}\binits{C.}}
(\byear{1999}).
\btitle{Scaling limits of interacting particle systems}.
\bseries{Grundlehren der Mathematischen Wissenschaften [Fundamental Principles
  of Mathematical Sciences]}
\bvolume{320}.
\bpublisher{Springer-Verlag, Berlin}.
\bdoi{10.1007/978-3-662-03752-2}
\bmrnumber{1707314 (2000i:60001)}
\end{bbook}
\endbibitem

\bibitem[\protect\citeauthoryear{Labb\'e}{2017}]{LabbeKPZ}
\begin{barticle}[author]
\bauthor{\bsnm{Labb\'e},~\bfnm{Cyril}\binits{C.}}
(\byear{2017}).
\btitle{Weakly asymmetric bridges and the {KPZ} equation}.
\bjournal{Comm. Math. Phys.}
\bvolume{353}
\bpages{1261--1298}.
\bdoi{10.1007/s00220-017-2875-0}
\bmrnumber{3652491}
\end{barticle}
\endbibitem

\bibitem[\protect\citeauthoryear{Labb\'e and Lacoin}{}]{LabLac}
\begin{bunpublished}[author]
\bauthor{\bsnm{Labb\'e},~\bfnm{Cyril}\binits{C.}} \AND
  \bauthor{\bsnm{Lacoin},~\bfnm{Hubert}\binits{H.}}
\btitle{Mixing time and cutoff for the Weakly Asymmetric Simple Exclusion
  Process}.
\bnote{In preparation}.
\end{bunpublished}
\endbibitem

\bibitem[\protect\citeauthoryear{Lacoin}{2016a}]{Lac16}
\begin{barticle}[author]
\bauthor{\bsnm{Lacoin},~\bfnm{Hubert}\binits{H.}}
(\byear{2016}a).
\btitle{Mixing time and cutoff for the adjacent transposition shuffle and the
  simple exclusion}.
\bjournal{Ann. Probab.}
\bvolume{44}
\bpages{1426--1487}.
\bdoi{10.1214/15-AOP1004}
\bmrnumber{3474475}
\end{barticle}
\endbibitem

\bibitem[\protect\citeauthoryear{Lacoin}{2016b}]{Lac162}
\begin{barticle}[author]
\bauthor{\bsnm{Lacoin},~\bfnm{Hubert}\binits{H.}}
(\byear{2016}b).
\btitle{The cutoff profile for the simple exclusion process on the circle}.
\bjournal{Ann. Probab.}
\bvolume{44}
\bpages{3399--3430}.
\bdoi{10.1214/15-AOP1053}
\bmrnumber{3551201}
\end{barticle}
\endbibitem

\bibitem[\protect\citeauthoryear{Lee and Yau}{1998}]{LeeYau98}
\begin{barticle}[author]
\bauthor{\bsnm{Lee},~\bfnm{Tzong-Yow}\binits{T.-Y.}} \AND
  \bauthor{\bsnm{Yau},~\bfnm{Horng-Tzer}\binits{H.-T.}}
(\byear{1998}).
\btitle{Logarithmic {S}obolev inequality for some models of random walks}.
\bjournal{Ann. Probab.}
\bvolume{26}
\bpages{1855--1873}.
\bdoi{10.1214/aop/1022855885}
\bmrnumber{1675008}
\end{barticle}
\endbibitem

\bibitem[\protect\citeauthoryear{Lesigne and Voln{\'y}}{2001}]{LesVol01}
\begin{barticle}[author]
\bauthor{\bsnm{Lesigne},~\bfnm{Emmanuel}\binits{E.}} \AND
  \bauthor{\bsnm{Voln{\'y}},~\bfnm{Dalibor}\binits{D.}}
(\byear{2001}).
\btitle{Large deviations for martingales}.
\bjournal{Stochastic Process. Appl.}
\bvolume{96}
\bpages{143--159}.
\bdoi{10.1016/S0304-4149(01)00112-0}
\bmrnumber{1856684}
\end{barticle}
\endbibitem

\bibitem[\protect\citeauthoryear{Levin, Peres and Wilmer}{2009}]{LevPerWil}
\begin{bbook}[author]
\bauthor{\bsnm{Levin},~\bfnm{David~Asher}\binits{D.~A.}},
  \bauthor{\bsnm{Peres},~\bfnm{Yuval}\binits{Y.}} \AND
  \bauthor{\bsnm{Wilmer},~\bfnm{Elizabeth~Lee}\binits{E.~L.}}
(\byear{2009}).
\btitle{Markov chains and mixing times}.
\bpublisher{Providence, R.I. American Mathematical Society}
\bnote{With a chapter on coupling from the past by James G. Propp and David B.
  Wilson.}
\end{bbook}
\endbibitem

\bibitem[\protect\citeauthoryear{{Levin} and {Peres}}{2016}]{LevPer16}
\begin{barticle}[author]
\bauthor{\bsnm{{Levin}},~\bfnm{D.~A.}\binits{D.~A.}} \AND
  \bauthor{\bsnm{{Peres}},~\bfnm{Y.}\binits{Y.}}
(\byear{2016}).
\btitle{{Mixing of the exclusion process with small bias}}.
\bjournal{ArXiv e-prints}.
\end{barticle}
\endbibitem

\bibitem[\protect\citeauthoryear{Liggett}{2005}]{Liggettbook}
\begin{bbook}[author]
\bauthor{\bsnm{Liggett},~\bfnm{Thomas~M.}\binits{T.~M.}}
(\byear{2005}).
\btitle{Interacting particle systems}.
\bseries{Classics in Mathematics}.
\bpublisher{Springer-Verlag, Berlin}
\bnote{Reprint of the 1985 original}.
\bmrnumber{2108619}
\end{bbook}
\endbibitem

\bibitem[\protect\citeauthoryear{M{\'a}lek et~al.}{1996}]{Malek}
\begin{bbook}[author]
\bauthor{\bsnm{M{\'a}lek},~\bfnm{J.}\binits{J.}},
  \bauthor{\bsnm{Ne{\v{c}}as},~\bfnm{J.}\binits{J.}},
  \bauthor{\bsnm{Rokyta},~\bfnm{M.}\binits{M.}} \AND
  \bauthor{\bsnm{Ru{\v{z}}i{\v{c}}ka},~\bfnm{M.}\binits{M.}}
(\byear{1996}).
\btitle{Weak and measure-valued solutions to evolutionary {PDE}s}.
\bseries{Applied Mathematics and Mathematical Computation}
\bvolume{13}.
\bpublisher{Chapman \& Hall, London}.
\bdoi{10.1007/978-1-4899-6824-1}
\bmrnumber{1409366}
\end{bbook}
\endbibitem

\bibitem[\protect\citeauthoryear{Otto}{1996}]{Otto}
\begin{barticle}[author]
\bauthor{\bsnm{Otto},~\bfnm{Felix}\binits{F.}}
(\byear{1996}).
\btitle{Initial-boundary value problem for a scalar conservation law}.
\bjournal{C. R. Acad. Sci. Paris S\'er. I Math.}
\bvolume{322}
\bpages{729--734}.
\bmrnumber{1387428}
\end{barticle}
\endbibitem

\bibitem[\protect\citeauthoryear{Rezakhanlou}{1991}]{Reza}
\begin{barticle}[author]
\bauthor{\bsnm{Rezakhanlou},~\bfnm{Fraydoun}\binits{F.}}
(\byear{1991}).
\btitle{Hydrodynamic limit for attractive particle systems on {${\bf Z}^d$}}.
\bjournal{Comm. Math. Phys.}
\bvolume{140}
\bpages{417--448}.
\bmrnumber{1130693 (93f:82035)}
\end{barticle}
\endbibitem

\bibitem[\protect\citeauthoryear{Rost}{1981}]{Rost81}
\begin{barticle}[author]
\bauthor{\bsnm{Rost},~\bfnm{H.}\binits{H.}}
(\byear{1981}).
\btitle{Nonequilibrium behaviour of a many particle process: density profile
  and local equilibria}.
\bjournal{Z. Wahrsch. Verw. Gebiete}
\bvolume{58}
\bpages{41--53}.
\bdoi{10.1007/BF00536194}
\bmrnumber{635270}
\end{barticle}
\endbibitem

\bibitem[\protect\citeauthoryear{Vovelle}{2002}]{Vovelle}
\begin{barticle}[author]
\bauthor{\bsnm{Vovelle},~\bfnm{Julien}\binits{J.}}
(\byear{2002}).
\btitle{Convergence of finite volume monotone schemes for scalar conservation
  laws on bounded domains}.
\bjournal{Numer. Math.}
\bvolume{90}
\bpages{563--596}.
\bdoi{10.1007/s002110100307}
\bmrnumber{1884231 (2002k:65158)}
\end{barticle}
\endbibitem

\bibitem[\protect\citeauthoryear{Wilson}{2004}]{Wil04}
\begin{barticle}[author]
\bauthor{\bsnm{Wilson},~\bfnm{David~Bruce}\binits{D.~B.}}
(\byear{2004}).
\btitle{Mixing times of {L}ozenge tiling and card shuffling {M}arkov chains}.
\bjournal{Ann. Appl. Probab.}
\bvolume{14}
\bpages{274--325}.
\bdoi{10.1214/aoap/1075828054}
\bmrnumber{2023023}
\end{barticle}
\endbibitem

\end{thebibliography}

\end{document}